\numberwithin{equation}{subsection}
\newtheorem{theorem}[subsection]{Theorem} 
\newtheorem{fact}[subsection]{Fact}
\newtheorem{lemma}[subsection]{Lemma}
\newtheorem{corollary}[subsection]{Corollary}
\newtheorem{conjecture}[subsection]{Conjecture}
\newtheorem{proposition}[subsection]{Proposition}
\theoremstyle{definition}
\newtheorem{definition}[subsection]{Definition}
\newtheorem{hypothesis}[subsection]{Hypothesis}
\newtheorem{remark}[subsection]{Remark}
\newtheorem{notation}[subsection]{Notation}
\def\calA{\mathcal{A}}
\def\calC{\mathcal{C}}
\def\calD{\mathcal{D}}
\def\calE{\mathcal{E}}
\def\calF{\mathcal{F}}
\def\calG{\mathcal{G}}
\def\calH{\mathcal{H}}
\def\calI{\mathcal{I}}
\def\calM{\mathcal{M}}
\def\calN{\mathcal{N}}
\def\calO{\mathcal{O}}
\def\calQ{\mathcal{Q}}
\def\calR{\mathcal{R}}
\def\calS{\mathcal{S}}
\def\calU{\mathcal{U}}
\def\calV{\mathcal{V}}
\def\gotha{\mathfrak{a}}
\def\gothb{\mathfrak{b}}
\def\gothc{\mathfrak{c}}
\def\gothd{\mathfrak{d}}
\def\gothn{\mathfrak{n}}
\def\gothC{\mathfrak{C}}
\def\gothm{\mathfrak{m}}
\def\gothp{\mathfrak{p}}
\def\gothq{\mathfrak{q}}
\def\gothN{\mathfrak{N}}
\def\CC{\mathbb{C}}
\def\FF{\mathbb{F}}
\def\GG{\mathbb{G}}
\def\NN{\mathbb{N}}
\def\PP{\mathbb{P}}
\def\QQ{\mathbb{Q}}
\def\RR{\mathbb{R}}
\def\TT{\mathbb{T}}
\def\ZZ{\mathbb{Z}}
\def\rmn{\mathrm{n}}
\def\scrF{\mathscr{F}}
\def\scrH{\mathscr{H}}
\def\scrI{\mathscr{I}}
\def\ttD{\mathtt{D}}
\def\ttF{\mathtt{F}}
\def\ttG{\mathtt{G}}
\def\ttS{\mathtt{S}}
\def\ttT{\mathtt{T}}
\def\ra{\rightarrow}
\def\lra{\longrightarrow}
\newcommand{\SD}{\mathrm{SD}}
\DeclareMathOperator{\End}{End}
\DeclareMathOperator{\Gal}{Gal}
\DeclareMathOperator{\Ker}{Ker}
\DeclareMathOperator{\Hom}{Hom}
\DeclareMathOperator{\rank}{rank}
\DeclareMathOperator{\Res}{Res}
\DeclareMathOperator{\Spec}{Spec}
\DeclareMathOperator{\Tor}{Tor}
\newcommand{\tor}{\mathrm{tor}}
\newcommand{\longto}{\longrightarrow}
\newcommand{\can}{\mathrm{can}}
\newcommand{\dR}{\mathrm{dR}}
\newcommand{\DP}{\mathrm{DP}}
\newcommand{\cris}{\mathrm{cris}}
\newcommand{\ord}{\mathrm{ord}}
\newcommand{\Nm}{\mathrm{Nm}}
\newcommand{\Tate}{\mathrm{Tate}}
\newcommand{\SL}{\mathrm{SL}}
\newcommand{\PR}{\mathrm{PR}}
\newcommand{\Ra}{\mathrm{Ra}}
\newcommand{\univ}{\mathrm{univ}}
\newcommand{\Sh}{\mathrm{Sh}}
\newcommand{\Frob}{\mathrm{Frob}}
\DeclareMathOperator{\GL}{GL}
\newcommand{\calHom}{\calH om}
\newcommand{\Ha}{\mathrm{Ha}}
\begin{document}
\title{Unramifiedness of Galois representations arising from Hilbert modular surfaces}

\author{Matthew Emerton}
\address{Matthew Emerton, Department of Mathematics, The University of Chicago, 5734 S. University
Avenue, Chicago, Illinois 60637, USA}
\email{emerton@math.uchicago.edu}
\author{Davide A. Reduzzi}
\address{Davide A. Reduzzi}
\email{davide.reduzzi@yahoo.com}
\author{Liang Xiao}
\address{Liang Xiao, Department of Mathematics, University of Connecticut, Storrs,
341 Mansfield Road, Unit 1009, Storrs, Connecticut 06269, USA}
\email{liang.xiao@uconn.edu}

\begin{abstract}
Let $p$ be a prime number and $F$ a totally real number field. For each prime $\gothp$ of $F$ above $p$ we construct a Hecke operator $T_\gothp$ acting on $(\mathrm{mod}\, p^m)$ Katz Hilbert modular classes which agrees with the classical Hecke operator at $\gothp$ for global sections that lift to characteristic zero. Using these operators and the techniques of patching complexes of F. Calegari and D. Geraghty we prove that the Galois representations arising from torsion Hilbert modular classes of parallel weight ${\bf 1}$ are unramified at $p$ when $[F:\QQ]=2$. Some partial and some conjectural results are obtained when $[F:\QQ]>2$.

\end{abstract}

\subjclass[2010]{11F80 (primary), 11F33 11F41 14G35 11G18 (secondary).}
\thanks{M.E. was
partially supported by NSF grants DMS--1249548 and DMS--1303450. 
D.R. was partially supported by an AMS--Simons Research Travel Grant.
L.X. was partially supported by Simons Collaboration Grant \#278433, NSF grant DMS--1502147, and CORCL research grant from University of California, Irvine.}

\maketitle

\setcounter{tocdepth}{1}
\tableofcontents

\section{Introduction}

With this paper we begin the investigation of local-global compatibility results for Galois representations attached to \emph{torsion} classes (of non-cohomological weight) occurring in the \emph{coherent} cohomology of Hilbert modular schemes. The existence of such representations was previously proved in \cite{emerton-reduzzi-xiao} (see also generalizations in \cite{reduzzi-xiao, boxer, goldring-kostivirta}). Our results are motivated by the conjectures made in \cite{CG} (e.g. \cite[Conjectures A and B]{CG}) by  Calegari and  Geraghty, and by the consequent applications to modularity lifting theorems as in \emph{op.cit.}. \\

Let $F$ be a totally real number field of degree $g$ and let $p$ be a prime number. To simplify the notation of this introduction, we assume that $p$ is unramified in $F$. Denote by $\calO$ the ring of integers in a large enough finite extension of $\QQ_p$ and let $\varpi$ be a uniformizer in $\calO$.
Let $\calS$ denote a finite set of places containing all archimedean places, all $p$-adic places, and all places there the level structure of Hilbert modular schemes is not hyperspecial. Let $\calM$ denote the corresponding Hilbert modular scheme (cf. \S\ref{S:moduli HBAS}) with hyperspecial level structure at $p$, \emph{i.e.}, the fine moduli scheme over $\Spec\calO$ classifying $g$-dimensional abelian schemes endowed with a certain prime-to-$p$ polarization, an endomorphism action by $\calO_F$, and a suitable tame level structure $K^p$. We let $\pi:\calA\to\calM$ denote the universal abelian scheme over $\calM$ and we set $\omega:=\pi_\ast\Omega^1_{\calA/\calM}$.

Let $\Sigma$ denote the set of $p$-adic embeddings of $F$. For each paritious weight (cf. \S\ref{S:geometric HMF}) \[
\kappa=((k_\tau)_{\tau \in \Sigma}; w):\Res_{F/\QQ}\GG_{m/F} \times \GG_{m/\QQ}\to\GG_{m/\QQ},\] the natural automorphic line bundle $\dot\omega^\kappa$ descends to a line bundle $\omega^\kappa$ over a certain finite quotient $\Sh$ of $\calM$, whose $\CC$-points form a (union of) connected components of the Shimura variety for $\GL_{2, F}$.  We choose an appropriate toroidal compactifications $\calM^\tor$ and $\Sh^\tor$ of $\calM$ and $\Sh$, respectively.
The \emph{coherent} sheaf cohomology of $\omega^\kappa$ over $\Sh_{\calO/\varpi^m}^\tor$ is naturally endowed with an action of commuting Hecke operators $T_\gothq$ parametrized by the prime ideals $\gothq \notin \calS$. As proved in \cite{emerton-reduzzi-xiao}, Hecke eigenclasses arising from any degree of coherent cohomology have canonically attached pseudo-representations of $G_F=\Gal(\overline{F}/F)$ unramified outside $\calS$ whose Frobenii traces match the Hecke eigenvalues. The existence of such representations follows from classical results when the automorphic line bundle $\omega^\kappa$ is twisted by a large enough power of $\omega^{({\bf 1}, 1)}$ with ${\bf 1}: =(1,\dots,1)$, since the corresponding modular forms lift to characteristic zero by the ampleness of $\omega^{\bf 1}$ on the minimal compactification of $\Sh$. On the other hand, when the weight $\kappa$ is small, such Hilbert modular forms or cohomological classes might not lift to characteristic zero, and Galois pseudo-representations are constructed in \emph{op.cit.} by exploiting the stratification of $\Sh_{\overline\FF_p}$ induced by partial Hasse invariants, and by constructing trivializations of suitable automorphic line bundles restricted to such strata. Our method was later generalized by \cite{boxer,goldring-kostivirta}. (We recall that the case of torsion in Betti cohomology is extensively treated in the groundbreaking work of  Scholze \cite{PSh}).

In \cite{CG},  Calegari and  Geraghty use the (conjectural) existence and local properties of Galois representations arising from torsions in the coherent or Betti cohomology of locally symmetric spaces to prove general modularity lifting results over an arbitrary number field. 

For instance, when $F=\QQ$ a local-global compatibility theorem is proved in \cite[Theorem~3.11]{CG} for Katz modular forms of weight $1$ defined over an arbitrary $\ZZ_p$-algebra, and a minimal modularity lifting result is deduced using commutative algebra techniques (cf. Theorem 3.26 of \emph{op.cit.}). 
Similarly, assume that $g>1$ and let $\gothp\mid p$ be a prime of $F$. If we chose a weight $\kappa$ whose $\gothp$-components are all equal to $1$, it is natural to conjecture that the corresponding  Galois representations arising from torsions in cohomology are unramified at $\gothp$. But one major problem in approaching this conjecture is the lack of a good $T_\gothp$ Hecke-operator acting on such torsion classes. This problem is solved when $F=\QQ$, thanks to \cite[Proposition 4.1]{gross}.

In this paper, we 
construct a natural Hecke operator at $\gothp\mid p$ acting on the cohomology of $\Sh_{\calO/\varpi^m}$ in any degree, and then we use this and the techniques of patching complexes of Calegari--Geraghty (\cite[\S6]{CG}) to prove the following.

\begin{theorem}\label{TI:main}
Assume that $F$ is a real quadratic field and $p>3$ is a prime inert in $F$. Let $\rho$ be a Galois representation arising from $H^\bullet(\Sh^\tor_{\calO/\varpi^m},\omega^{({\bf 1}, -1)})$ for some $m\leq \infty$. Suppose that $\bar \rho: = \rho\otimes\bar\FF_p$ is irreducible, $\bar{\rho}(\Frob_p)$ has distinct eigenvalues,  and that $\bar{\rho}(G_F)$ contains $\SL_2(\FF_p)$. Then the representation $\rho$ is unramified at $p$.
\end{theorem}
Here $\Sh^\tor$ is a toroidal compactification of $\Sh$. This theorem is proved in Theorem \ref{T:main}.
\begin{remark}

\begin{enumerate}
\item Here we used the weight $({\bf 1}, -1)$ as opposed to $({\bf 1}, 1)$; this is due to our new normalization of Hecke operators. See \S \ref{S:tame Hecke operators} for more discussion.
\item
The assumption that $p$ is inert in $F$ is just meant to keep our arguments as simple as possible. The case when $p$ is ramified can be treated by working over the Pappas--Rapoport splitting model of the Deligne--Pappas modular schemes. 
\item
It would be interesting to see whether the similar techniques together with some results on the global geometry of the Goren--Oort strata of Hilbert modular varieties from \cite[\S1.5]{tian-xiao} could be applied to prove Theorem~\ref{TI:main} for a general totally real field $F$ assuming that for each prime $\gothp\,|\, p$ in $F$ we have $e_\gothp f_\gothp\leq 2$.
\item The assumption on $\bar\rho(\Frob_p)$ might be removed by suitable adaptation of the doubling arguments of \cite[\S3.6--7]{CG} and \cite{wiese}. 

\item
When $g>2$, one can show that the representations $\rho$ coming from $H^0$ is unramified (see Proposition~\ref{P:nr} for a proof under the $p$-distinguished hypothesis and see \cite{dimitrov-wiese} without the $p$-distinguished hypothesis).
\item

For more general $\rho$ coming from higher degree cohomology groups (and in some partial weight $1$ cases), we provide a strategy to approach such a question (see Proposition~ \ref{T:conj_main}).
\end{enumerate}  

\end{remark}

\subsection{Sketch of the construction of  the Hecke operator $T_\gothp$ on torsion classes}
Let $\gothp$ be a prime of $F$ above $p$ with residual degree $f_\gothp$ and inertial degree $e_\gothp$. The na\"ive attempt of defining $T_\gothp f$ for a Hilbert modular form $f\in H^0(\Sh_{\calO/\varpi^m}^\tor, \omega^\kappa)$ via the ``sum''
$$(T_\gothp f)(A,\dots):=\sum_Cf(A/C,\dots)$$
over suitable subgroup schemes $C\subset A[\gothp]$ of rank $p^{f_\gothp}$ does not make sense when $\varpi^m=0$ over the base scheme.
Here we lose the usual  factor $\frac{1}{\Nm_{F/\QQ}(\gothp)}$ comparing to many other literatures; we refer to \S \ref{S:tame Hecke operators} for the discussion.
 When twisting the automorphic line bundle $\omega^\kappa$ by a sufficiently large power of the ample bundle $\omega^{({\bf 1}, 1)}$, a global section over $\calO/\varpi^m$ can be first lifted characteristic zero and then we can apply the usual $T_\gothp$ operator there, and finally reduce modulo $\varpi^m$. The problem is that, when the weight $\kappa$ is fixed,  not all Hilbert modular forms (for instance, in $H^0(\Sh^\tor_{\calO/\varpi^m},\omega^{({\bf 1}, -1)})$) lift to characteristic zero. It is not clear \emph{a priori} why a Hecke operator $T_\gothp$ should act with the expected $q$-expansion on non-liftable forms. Moreover, for our purposes we would like to have a $T_\gothp$ Hecke-action on the entire cohomology $H^\bullet(\Sh^\tor_{\calO/\varpi^m},\omega^{({\bf 1}, -1)})$, and not just on $H^0$, so that the trick adopted in \cite[Proposition 4.1]{gross} when $g=1$ (which relies on lifting a mod $p$ Katz elliptic modular form of weight $1$ to an \emph{a priori} meromorphic form in characteristic zero) does not apply in our situation.

Our construction is inspired by the work of B. Conrad \cite{conrad}.
Let us motivate by recalling the definition of the Hecke operator at a prime $\gothq \notin \calS$. We construct a covering $\Sh(\gothq)$ of $\Sh$ by adding an Iwahori level structure at $\gothq$, and consider the Hecke correspondence attached to the two canonical projections $\pi_1,\pi_2:\Sh(\gothq)\to\Sh$ given respectively by forgetting the Iwahori level structure, and by quotienting by it. More precisely, the Hecke action of $T_\gothq$ is obtained by composing the natural map (twisted by the Kodaira--Spencer isomorphism, cf. \S\ref{S:def of Tp})
\begin{equation}\label{E:H1}
\pi_{1\ast}\pi_2^\ast\omega'^\kappa\to\pi_{1\ast}\pi_1^\ast\omega^\kappa
\end{equation}
with the finite flat trace map
\begin{equation}\label{E:H2}
\mathrm{tr_{ff}}:\pi_{1*}\pi_1^*\omega^\kappa\to\omega^\kappa,
\end{equation} 
and finally taking the cohomology. 

It is a non-obvious fact that this geometric approach to define the Hecke action works also in characteristic $p$ when $g=1$ and $\gothq=(p)$, as proved in \cite[\S4.5]{conrad}. The key here is to show that the composition $\pi_{1\ast}\pi_2^\ast\omega'^k\to\omega^k$ of the morphisms (\ref{E:H1}) and (\ref{E:H2}) between sheaves on $\calM_{\calO}$ has image inside $p\cdot\omega^k$ if $k\geq 1$, because it is so over the open dense ordinary locus. 

Unfortunately, when $g>1$ there are additional complications as the projections $\pi_1,\pi_2$ are not finite flat anymore over the special fiber of $\Sh$. For instance, when $g=2$ and $p$ is inert in $F$, the preimage under $\pi_1$ (or $\pi_2$) of the superspecial points of the special fiber of $\Sh$ is the union of two projective lines. To overcome this issue, we use in this paper the dualizing trace map $$\mathrm{tr}:R\pi_{1\ast}\mathscr{D_\gothp}\to\mathscr{D}$$
instead of the finite flat trace map to construct the Hecke operator. (Here $\mathscr{D}_\gothp$ and $\mathscr{D}$ denote the dualizing complexes on $\Sh(\gothp)$ and on $\Sh$ respectively). Twisting by the Kodaira--Spencer map we can identify $\mathscr{D}$ with the automorphic line bundle $\omega^{((2, \dots, 2),0)}$ on $\Sh$, and the compatibility between the dualizing trace map and the finite flat trace map guarantees that our construction of $T_\gothp^\rmn$ (an appropriately normalized $T_\gothp$-operator) coincides with the usual one on the ordinary locus of $\Sh$. 
Following \cite[\S4.5]{conrad}, it suffices to show that the composition $R\pi_{1*}\pi_2^*\omega'^\kappa \to \omega^\kappa$ factors through $\Nm_{F/\QQ}(\gothp)  \cdot \omega^\kappa$ in the derived category of bounded complexes of coherent modules of $\Sh$. It is not difficult to check this over the ordinary locus (cf. Proposition \ref{P:image p^g}).

Now we come to the second technical issue. This was a gap in the previous version of the paper and we thank Pilloni for pointing this out to us. Unlike in the case of $F = \QQ$, since we are working with the derived category, knowing the factorization over the ordinary locus {\it a priori} does {\bf not} imply the factorization over the entire $\Sh$.
Luckily, when $\kappa$ satisfies certain natural and mild conditions (cf \eqref{E:ampleness condition}), this implication holds.
Indeed, by some standard homological algebra argument (cf Proposition~\ref{P:abstract hom}), the factorization question is reduced to proving that the set-theoretical support of $R^j\pi_{1*}\pi_2^*\omega'^\kappa_{\overline\FF_p}$ for $j >0$ has codimension at least $j+1$ in the special fiber $\Sh_{\overline\FF_p}$, which is the content of Proposition~\ref{P:ample=>small support}.

Let us point out that the proof of Proposition~\ref{P:ample=>small support} makes use of some deep geometric result of the map $\pi_1: \Sh(\gothp)_{\overline\FF_p} \to \Sh_{\overline\FF_p}$, along the line of \cite{goren-kassaei,helm, tian-xiao}. We single out this proof in Section \ref{Sec:section 4} as we believe that it has its own interest.
To summarize the key points: we show that
\begin{itemize}
\item
all fibers of $\pi_{1,\overline\FF_p}$ are unions of products of $\PP^1$-bundles, and
\item
the restriction of $\pi_2^*\omega'^\kappa_{\overline \FF_p}$ to every $\PP^1$-bundle which affects the proof of Proposition~\ref{P:ample=>small support}, is $\calO(n)$ for some $n>-1$, and hence has zero higher derived pushforward.
\end{itemize}


\subsection{Sketch of the proof of Theorem~\ref{TI:main}} Recall that $p$ is assumed to be inert in $F$. 
We fix a mod $p$ irreducible Galois representation $\bar\rho:G_F\to\GL_2(\overline\FF_p)$ which is modular of weight $({\bf 1}, -1)$. We assume that $\bar{\rho}$ is $p$-distinguished, i.e. the Frobenius eigenvalues of $\bar \rho(\Frob_p)$ are distinct. (The case for $H^0$ without this further assumption on $\bar \rho(\Frob_p)$ is treated by Dimitrov and Wiese \cite{dimitrov-wiese}.)
The cohomology groups $H^*(\Sh^\tor_{\calO/\varpi^m},\omega^{({\bf 1}, -1)})_\gothm$ are modules for the Hecke algebra $\TT$ generated by the Hecke operators attached to primes away from $\calS$ (cf. \S\ref{S:Hecke TW primes}). 
We consider the localization at the maximal ideal $\gothm$ of $\TT$ corresponding to $\bar \rho$.

Having available a Hecke operator at $\gothp$ acting on torsion classes, we can prove (cf. Proposition \ref{P:nr}) that representations arising from $H^0(\Sh^\tor_{\calO/\varpi^m}, \omega^{({\bf 1}, -1)})_\gothm$ are unramified at $p$. As in the elliptic modular case (cf. \cite{edixhoven} and \cite[proof of Theorem 3.11]{CG}) the proof of this fact uses that the Hecke operator $T_\gothp^\rmn$ has the expected $q$-expansion on $H^0$ (Remark \ref{R:q-exp}), and consists of computing the commutator between $T_\gothp^\rmn$ and the total Hasse invariant acting on forms of weight $({\bf 1}, -1)$. By Serre duality (Lemma~\ref{L:hecke operator duality}), the result on $H^0$ can also be read as an unramifiedness result for representations arising from $H^g$ (see Proposition~\ref{L:unramified}).

From the Galois deformation point of view, the (framed) cohomology group 
\[
H^*(\Sh^\tor_{\calO/\varpi^m},\omega^{({\bf 1}, -1)})_\gothm^\square: = H^*(\Sh^\tor_{\calO/\varpi^m},\omega^{({\bf 1}, -1)})_\gothm \otimes_{\calO/\varpi^m} \calO/\varpi^m[[z_1, z_2, z_3,z_4]]
\]
becomes a module over the full universal (framed) $\calO$-deformation ring $R_p^\square$ of $\bar\rho|_{G_{F_p}}$ (with fixed determinant), where $G_{F_p} = \Gal(\overline \QQ_p/ F_p)$ is the local Galois group. This ring contains a \emph{proper} ideal $\mathscr{I}$ cutting out the locus of unramified lifts. The existence of the $T_\gothp^\rmn$-operator implies that the action of $R_p^\square$ on $H^{0,\square}$ and $H^{g,\square}$ through the quotient $R_p^\square/\mathscr{I}$.
Moreover, when $g=2$, a ``squeezing'' argument (cf. Proposition \ref{P:n=4}) gives: 
\begin{equation}\label{E:intro}
\mathscr{I}^3\cdot H^1(\Sh^\tor_{\calO/\varpi^m},\omega^{({\bf 1}, -1)})^\square_\gothm=0.
\end{equation} 

At this point, we make use of the Calegari--Geraghty version \cite[\S 6]{CG} of the Taylor--Wiles--Kisin patching argument. Denote by $r$ the number of finite prime-to-$p$ places in $\calS$. For each $N\geq 1$ let $\calQ_N$ be a set of cardinality $q\geq 1$ of Taylor--Wiles primes congruent to $1$ modulo $p^N$. 
Let $\Sh(\calQ_N)_1^\tor \to \Sh(\calQ_N)^\tor$ denote the corresponding \'etale cover of compactified Shimura varieties with a (variant of) $K^p \cap \Gamma_1(\calQ_N)$-level structure and a $K^p \cap \Gamma_0(\calQ_N)$-level structure (cf \S \ref{S:Sh(Q)01}). Then one can construct as in \cite[\S7.2]{CG} complexes computing the cohomology of $\Sh(\calQ_N)^\tor_{1,\calO/\varpi^m}$ localized at a suitable maximal ideal $\gothm_{\calQ_N}$. Such complexes are endowed with an action of a Hecke algebra $\TT_{\calQ_N,\gothm_{\calQ_N}}$ and of the group-algebra $\calO[(\ZZ/p^N\ZZ)^q]$, where $q$ is the dimension of a suitable dual Selmer group. Following the proof of \cite[Theorem 6.3]{CG}, we can patch these complexes together when $N$ increases. Introducing framing by $j:=4(r+1)-1$ variables, this process produces a perfect complex $P_\infty^{\square,\ast}$ of $S_\infty^\square:=\calO[[x_1,\dots,x_{q+j}]]$-modules. Moreover, $H^\bullet(P_\infty^{\square,\ast})$ is endowed with an action of a complete local Noetherian $R_p^\square$-algebra $R'_\infty$ of relative Krull dimension $4r+q-g$ (cf. \S\ref{S:patch}). Since $g=2$ and $\mathscr{I}^3\cdot H^{\ast,\square}=0$ by (\ref{E:intro}), we conclude that the action of $R_\infty'$ on the patched modules factors via $R_\infty:=R_\infty'/{\mathscr{I}^3}$.

The fact that $$\dim(R_p^\square/\mathscr{I}^n)=\dim(R_p^\square/\mathscr{I})=4$$ implies that the following numerical condition is satisfied: $$\dim R_\infty=\dim S_\infty^\square - g.$$
This guarantees that $$\mathrm{codim}_{S_\infty^\square}H^\ast(P_\infty^{\square,\ast})=g.$$
Since $g$ is also the length of our complexes computing coherent cohomology,  \cite[Lemma~6.2]{CG} implies that the patched complex $P_\infty^{\square,\ast}$ is a \emph{projective} resolution of its top-degree cohomology. In particular we have an isomorphism of $R_p^\square$-modules:

$$\Tor_i^{S_\infty^\square}(H^g(P_\infty^{\square,\ast}),\calO/\varpi^m)\simeq H^i(\Sh^\tor_{\calO/\varpi^m},\omega^{({\bf 1}, -1)})_\gothm^\square.$$
 
Loosely speaking, the coherent cohomology of $\Sh_{\calO/\varpi^m}$ is controlled by the top-degree cohomology of some patched complex $P_\infty^{\square,\ast}$. Since $H^g(P_\infty^{\square,\ast})$ is built by patching the top cohomology groups of the schemes $\Sh(Q_N)_{1,\calO/\varpi^m}^\tor$ for varying $N\geq 1$, and since these give rise to unramified representations, we deduce Theorem \ref{TI:main} (cf. Theorem \ref{T:main}, and Theorem \ref{T:conj_main}).

\begin{remark}

We remark that the only point in which we used $g=2$ in the argument is to show that a $4$-dimensional quotient of the local deformation ring $R_p^\square$ was acting on our (framed) cohomology (namely, $R_p^\square/\mathscr{I}^3$). The argument just described would go through exactly in the same way for an arbitrary degree $g=[F:\QQ]$ and arbitrary $p>3$ if one could prove the following conjecture (cf. Conjecture \ref{C:alpha}):
\end{remark}

\noindent{\bf Conjecture.} \emph{Let $F$ be a totally real number field of degree $g$ and denote by $\mathscr{I}$ the ideal cutting out the unramified locus of the deformation ring $R_p^\square$. There exists a positive integer $n$ depending on $g$ such that:} $$\mathscr{I}^n\cdot H^\bullet(\calM^\tor_{\calO/\varpi^m},\omega^{({\bf 1}, -1)})_\gothm^\square = 0.$$

\subsection{Organization of the paper}
In 
Section \ref{Sec:section 2} we recall the definition of the Hilbert Shimura varieties, and the stratification induced on them by generalized partial Hasse invariants. Since we allow $p$ to ramify in $F$, we work with the Pappas--Rapoport splitting model $\Sh^\PR$. In Section \ref{Sec:section 3} we construct the Hecke operator $T_\gothp^\rmn$ acting on the cohomology of $\Sh^{\PR}_{\calO/\varpi^m}$ ($m\leq\infty$) with coefficient in an automorphic sheaf of paritious weight $\kappa$ (satisfying some conditions). This construction relies on  key Proposition~\ref{P:ample=>small support}, which is subsequently proved in Section~\ref{Sec:section 4}. In Section \ref{Sec:section 5} we use the construction of $T_\gothp^\rmn$ operator to prove unramifiedness of the representations arising from (non-liftable) Katz Hilbert modular \emph{forms} of weight ${\bf 1}$, and then we specialize to the case $g=2$ to prove unramifiedness of Hilbert modular \emph{classes}.

\subsection*{Acknowledgments}
We would especially like to thank Frank
Calegari and David Geraghty for their interest in and comments on the results
of this paper. Our debit to their work is clear: the project of studying Galois representations arising from torsion classes in coherent cohomology originated from their Conjectures A and B of \cite{CG}, and their techniques for patching complexes constitute one of the main ingredients for the proof of our main result. 
We thank Vincent Pilloni for pointing out a serious gap in an early version of this paper. We thank the anonymous referees for careful reading of the paper and many useful comments.
We are
also grateful to Mladen Dimitrov, Kai-Wen Lan, Yichao Tian, Akshay Venkatesh, and  Gabor Wiese for helpful conversations and their interest in our results.

\section{Splitting models of Hilbert modular schemes}
\label{Sec:section 2}

We present here some preliminaries on splitting models of Hilbert modular schemes, as constructed by  Pappas and  Rapoport in \cite{pappas-rapoport}; we follow \cite{reduzzi-xiao}. We also recall the stratification induced by suitable generalized partial Hasse invariants on the special fiber of such models (cf. \cite{reduzzi-xiao}). Along the way, we make a small generalization following \cite[\S 2.3]{tian-xiao2} to allow more general tame level structure, which is required for later application to Taylor--Wiles--Kisin and Calegari--Geraghty patching argument.

\subsection{Setup}
\label{S:setup}
Let $\overline \QQ$ denote the algebraic closure of $\QQ$ inside $\CC$.
We fix a rational prime $p$ and a field isomorphism $\overline \QQ_p \simeq \CC$.
Base changes of algebras and schemes will often be denoted by a subscript, if no confusion arises.

Let $F$ be a totally real field of degree $g>1$, with ring of integers $\calO_F$ and group of totally positive units $\calO_F^{\times, +}$.
Denote by $\gothd:=\gothd_F$ the different ideal of $F/ \QQ$.
Let $\gothC: = \{ \gothc_1, \dots, \gothc_{h^+}\}$ be a fixed set of representatives for the elements of the narrow class group of $F$, chosen to be coprime to $p$.
For a nonzero ideal $\gotha$ of $F$, we write $\NN(\gotha)$ for its norm $\#(\calO_F/\gotha)$.

We fix a large enough coefficient field $E$ which is a finite Galois extension of $\QQ_p$ inside $\overline \QQ_p$. We require that $E$ contains the images of all $p$-adic embeddings of $F(\sqrt u; u \in \calO_F^{\times, +})$ into $\overline \QQ_p$.
Let $\calO$ denote the valuation ring of $E$; choose a uniformizer $\varpi$ of $\calO$ and denote by $\FF$ the residue field.

We write the prime ideal factorization of $p \calO_F$ as $\gothp_1^{e_1} \cdots \gothp_r^{e_r}$, where $r$ and $e_i$ are positive integers.
We also set $\FF_{\gothp_i} = \calO_F / \gothp_i$ and $f_i = [\FF_{\gothp_i}: \FF_p]$.
Let $\overline \FF_p$ denote the residue field of $\overline \ZZ_p$, and let $\sigma$ denote the arithmetic Frobenius on $\overline \FF_p$.
We label the embeddings of $\FF_{\gothp_i}$ into $\overline \FF_p$ (or, equivalently, into $\FF$) as $\{\tau_{\gothp_i, 1}, \dots, \tau_{\gothp_i, f_i}\}$ so that $\sigma \circ \tau_{\gothp_i, j} = \tau_{\gothp_i, j+1}$ for all $j$, with the convention that $\tau_{\gothp_i, j}$ stands for $\tau_{\gothp_i, j \pmod {f_i}}$.
For each $\gothp_i$, denote by $F_{\gothp_i}$ the completion of $F$ for the $\gothp_i$-adic topology. Let  $W(\FF_{\gothp_i})$ denote the ring of integers of the maximal subfield of ${F_{\gothp_i}}$ unramified over $\QQ_p$. The residue field of $W(\FF_{\gothp_i})$ is identified with $\FF_{\gothp_i}$. Each embedding $\tau_{\gothp_i, j}:\FF_{\gothp_i} \to \FF$ of residue fields induces an embedding $W(\FF_{\gothp_i}) \to \calO$ which we denote by the same symbol.

Let $\Sigma$ denote the set of embeddings of $F$ into $\overline \QQ$, which is further identified with the set of embeddings of $F$ into $\CC$ or $\overline \QQ_p$ or $E$.
Let $\Sigma_{\gothp_i}$ denote the subset of $\Sigma$ consisting of all the $p$-adic embeddings of $F$ inducing the $p$-adic place $\gothp_i$.
For each $i$ and each $j =1, \dots, f_i$, there are exactly $e_i$ elements in $\Sigma_{\gothp_i}$ that induce the embedding $\tau_{\gothp_i, j}:W(\FF_{\gothp_i}) \to \calO$; we label these elements as $\tau_{\gothp_i, j}^1, \dots, \tau_{\gothp_i, j}^{e_i}$.  \emph{There is no canonical choice of such labelling, but we fix one for the rest of this paper}.

We choose a uniformizer $\varpi_i$ for the ring of integers $\calO_{F_{\gothp_i}}$ of $F_{\gothp_i}$.  Let $E_{\gothp_i}(x)$ denote the minimal polynomial of $\varpi_i$ over the ring $W(\FF_{\gothp_i})$: it is an Eisenstein polynomial.
Using the embedding $\tau_{\gothp_i,j}$, we can view this polynomial as an element $E_{\gothp_i,j}(x) := \tau_{\gothp_i,j}(E_{\gothp_i}(x))$  of $\calO[x]$.  We have:
\[
E_{\gothp_i,j}(x) = (x - \tau_{\gothp_i, j}^1(\varpi_i)) \cdots (x - \tau_{\gothp_i, j}^{e_i}(\varpi_i))
.
\]

\subsection{Moduli space of Hilbert--Blumenthal abelian varieties with additional information}
\label{S:moduli HBAS}
Let $S$ be a locally Noetherian $\calO$-scheme. A \emph{Hilbert--Blumenthal abelian $S$-scheme} (HBAS) with real multiplication by $\calO_F$ is the datum of an abelian $S$-scheme $A$ of relative dimension $g$, together with a ring embedding $ \calO_F \to \End_SA$.
We have natural direct sum decompositions
\[
\omega_{A/S} = \bigoplus_{i=1}^r \omega_{A/S, \gothp_i} =  \bigoplus_{i=1}^r \bigoplus_{j = 1}^{f_i}\omega_{A/S, \gothp_i, j},
\]
where $W(\FF_{\gothp_i}) \subseteq \calO_{F_{\gothp_i}}$ acts on $\omega_{A/S, \gothp_i,j}$ via $\tau_{\gothp_i, j}$.

Let $\gothc \in \gothC$ be a fractional ideal of $F$ (coprime to $p$), with cone of positive elements $\gothc^+$.
We say that a HBAS $A$ over $S$ is \emph{$\gothc$-polarized} if
there is an $S$-isomorphism $\lambda: A \otimes _{\calO_F} \gothc \xrightarrow \sim A^\vee $ of HBASs under which the symmetric elements (resp. the polarizations) of $\Hom_{\calO_F}(A, A^\vee)$ correspond to the elements of $\gothc$ (resp. $\gothc^+$) in $\Hom_{\calO_F}(A, A \otimes_{\calO_F} \gothc)$.
For such a HBAS, each $\omega_{A/S, \gothp_i, j}$ is a locally free sheaf over $S$ of rank $e_i$.

We define the tame level structures following \cite[\S 2.3]{tian-xiao2}.
For a positive integer $N$ relatively prime to $p$, a \emph{principal level $N$-structure} on a HBAS $A$ over $S$ is an $\calO_F$-linear isomorphism of finite \'etale group schemes over $S$:
\[
\alpha_N: (\calO_F/N\calO_F)^{\oplus 2} \xrightarrow{\sim}A[N].
\]
In general, let $\widehat{\calO}_F$ (resp. $\widehat{\calO}_F^{(p)}$) denote the direct product of completions of $\calO_F$ at all finite places (resp. all finite places relatively prime to $p$).
For $K^p$ an open compact subgroup of $\GL_2(\widehat{\calO}_F^{(p)})$, we choose a positive integer $N$ relatively prime to $p$ such that $K^p$ contains 
\begin{equation}
\label{E:K(N)p}
K(N)^p = \big\{ \big(\begin{smallmatrix}a & b\\c&d
\end{smallmatrix} \big) \in \GL_2(\widehat{\calO}_F^{(p)})\;|\; a-1, b, c, d-1 \equiv 0 \bmod N\big\}.
\end{equation}
Then a \emph{$K^p$-level structure} on a HBAS $A$ over $S$ is a collection of, for each connected component $S_i$ of $S$ with a fixed
geometric point $\bar s_i \in S_i$, a $\pi_1(S_i
, \bar s_i)$-invariant $K^p/K(N)^p$-orbit of $\alpha_{N,\bar s_i}$ above. This does not depend on the choices of $N$ and $\bar s_i$.
We put $K_p = \GL_2(\calO_{F, p})$ and $K = K^pK_p$.
When $K^p = \big\{ \big(\begin{smallmatrix}a & b\\c&d
\end{smallmatrix} \big) \in \GL_2(\widehat{\calO}_F^{(p)})\;|\; c, d-1 \equiv 0 \bmod \calN\big\}$, the notion of a $K^p$-level structure recovers the notion of $\Gamma_{00}(\calN)$-level structure considered in \cite{emerton-reduzzi-xiao} and \cite{reduzzi-xiao}. Most of the statements of \cite{emerton-reduzzi-xiao} and \cite{reduzzi-xiao} continue to hold for this general $K^p$-level structure; we shall point out the change in case it is needed.




Fix an open subgroup $K^p \subseteq \GL_2(\widehat{\calO}_F^{(p)})$. Denote by $\underline \calM^\PR_\gothc = \underline \calM^\PR_{\gothc, K}$ the functor that assigns to a locally Noetherian $\calO$-scheme $S$ the set of isomorphism classes of tuples $(A, \lambda, \alpha, \underline \scrF)$, where
\begin{itemize}
\item
$(A, \lambda)$ is a $\gothc$-polarized HBAS over $S$ with real multiplication by $\calO_F$,
\item
$\alpha$ is a $K^p$-level structure, and
\item $\underline \scrF$ is a collection $(\scrF_{\gothp_i, j}^{(l)})_{i = 1, \dots, r; j = 1, \dots, f_i;l = 0, \dots, e_i}$ of locally free sheaves over $S$ such that
\begin{itemize}
\item $0 =\scrF_{\gothp_i, j}^{(0)} \subsetneq \scrF_{\gothp_i, j}^{(1)} \subsetneq \cdots \subsetneq
\scrF_{\gothp_i, j}^{(e_i)} = \omega_{A/S, \gothp_i, j}$ and each $\scrF_{\gothp_{i, j}}^{(l)}$ is stable under the $\calO_F$-action (not just the action of $W(\FF_{\gothp_i})$),
\item each subquotient $\scrF_{\gothp_i, j}^{(l)} / \scrF_{\gothp_i, j}^{(l-1)}$ is a locally free $\calO_S$-module of rank one (and hence $\rank_{\calO_S} \scrF_{\gothp_i, j}^{(l)} = l$), and
\item the action of $\calO_F$ on each subquotient $\scrF_{\gothp_i, j}^{(l)} / \scrF_{\gothp_i, j}^{(l-1)}$ factors through $\calO_F \xrightarrow{\tau_{\gothp_i, j}^l} \calO$, or equivalently, this subquotient is annihilated by $[\varpi_i] - \tau_{\gothp_i,j}^l(\varpi_i)$, where $[\varpi_i]$ denotes the action of $\varpi_i$ as an element of $\calO_{F_{\gothp_i}}$.
\end{itemize}
\end{itemize}
We use $\underline \calM_\gothc^\DP$ to denote the functor obtained from $\underline \calM_\gothc^\PR$ by forgetting the filtrations $\underline \scrF$.

Both $\underline \calM_\gothc^\PR$ and $\underline \calM_\gothc^\DP$ carry an action of $\calO_F^{\times, +}$:
\begin{equation}
\label{E:action <u>}
\textrm{for }u \in \calO_F^{\times,+}, \quad \langle u \rangle: (A, \lambda, \alpha, \underline \scrF) \longmapsto (A, u\lambda, \alpha, \underline \scrF).
\end{equation}
This action is trivial on the subgroup $(K \cap \calO_{F}^\times)^2$ of $\calO_F^{\times,+}$.

It is well known (cf. \cite[Proposition~2.4]{reduzzi-xiao}) that, when $K^p$ is sufficiently small, the functor $\underline \calM_\gothc^\mathrm{DP}$ (resp. $\underline \calM_\gothc^\mathrm{PR}$) is represented by an $\calO$-scheme of finite type, that we denote $\calM_\gothc^\DP$ (resp. $\calM_\gothc^\PR$). Moreover, the moduli space  $\calM_\gothc^\mathrm{DP}$ is normal (\cite[Corollaire 2.3]{deligne-pappas}). Let $\calM_\gothc^\mathrm{Ra}$ denote its smooth locus, called the \emph{Rapoport locus} (\cite{rapoport}).  Then $\calM_\gothc^\mathrm{Ra}$ is the open subscheme of $\calM_\gothc^\mathrm{DP}$ parameterizing those HBASs for which  $\omega_{A/S}$ is a locally free $(\calO_{F} \otimes_{\ZZ} \calO_S)$-module of rank one. The natural morphism 
\[
\pi:\calM_\gothc^\mathrm{PR} \to \calM_\gothc^\mathrm{DP}
\] 
is projective, and it induces an isomorphism of an open subscheme of $ \calM_\gothc^\mathrm{PR}$ onto $\calM_\gothc^\mathrm{Ra}$ (\cite[Proposition~2.4(3)]{reduzzi-xiao}). As in \cite[Theorem 2.9]{reduzzi-xiao}, one can prove that $\calM^\PR_\gothc$ is smooth over $\calO$.

By \cite[Proposition~2.4 and Lemma~2.5]{tian-xiao2}, we may always shrink $K^p$ so that the following hypothesis holds, which we then assume from now on.
\begin{hypothesis}
\label{H:Kp neat}
We assume the following for the open subgroup $K^p \subseteq \GL_2(\widehat{\calO}_F^{(p)})$.
\begin{enumerate}
\item
$K^p$ is of the form $\prod_{\gothq \nmid p} K_\gothq$ for an open subgroup $K_\gothq \subseteq \GL_2(F_\gothq)$ such that $K_\gothq = \GL_2(\calO_{F_\gothq})$ for all but finitely many place $\gothq$,
\item
$K^p$ is sufficiently small so that each $\calM_\gothc^\DP$ and each $\calM_\gothc^\PR$ are $\calO$-schemes of finite type, and
\item
$K^p$ is sufficiently small so that the action of $\calO^{\times, +}_F / (K^pK_p \cap \calO_F^\times)^2$ on each $\calM_\gothc^\DP$ and each $\calM_\gothc^\PR$ are free on geometric points. 
\end{enumerate}
\end{hypothesis}

\subsection{Integral model of Shimura varieties}
\label{S:integral model Sh var}
In virtue of Hypothesis~\ref{H:Kp neat}, the quotients of $\calM_\gothc^\PR$ and $\calM_\gothc^\DP$ by the group $\calO^{\times, +}_F / (K^pK_p \cap \calO_F^\times)^2$, denoted by  $ \Sh_\gothc^\PR$ and $ \Sh_{\gothc}^\DP$ respectively, are $\calO$-schemes of finite type, and the quotient morphisms are \'etale.
We point out that when $K^p$ satisfies Hypothesis~\ref{H:Kp neat}, conditions (2) and (3) also hold for any open subgroup $K'^p \subseteq K^p$ as long as $K'^pK_p \cap \calO_F^\times = K^pK_p \cap \calO_F^\times$. 

For $? \in\{\mathrm{DP}, \mathrm{PR}, \mathrm{Ra}\}$ we denote by $\calA_\gothc^?$ the universal abelian scheme over $\calM_\gothc^?$.  We set
\[
\calM^? := \coprod_{\gothc \in \gothC} \calM_\gothc^?, \quad \Sh^? := \coprod_{\gothc \in \gothC} \Sh_\gothc^?, \quad \textrm{and} \quad \calA^? := \coprod_{\gothc \in \gothC} \calA_\gothc^?.
\]
Notice that the universal abelian scheme $\calA^?$ may not descent to $\Sh^?$.

Denote by $\omega_{\calA^?/\calM^?}$ the pullback via the zero section of the sheaf of relative differentials of $\calA^?$ over $\calM^?$. We let $\underline \calF = (\calF_{\gothp_i,j}^{(l)})$ denote the universal filtration of $\omega_{\calA^\PR/\calM^\PR}$. For each $p$-adic embedding $\tau = \tau_{\gothp_i, j}^l$ of $F$ into $\overline \QQ_p$,
we set
\[
\dot\omega_\tau: = \calF_{\gothp_i, j}^{(l)} / \calF_{\gothp_i, j}^{(l-1)};
\]
it is an automorphic line bundle on the splitting model $\calM^\PR$. The additional dot in the notation $\dot \omega_\tau$ is placed in order to distinguish this sheaf from its descent $\omega_\tau$ to $\Sh^\PR$, which will be introduced later. We provide $\dot \omega_\tau$ with an action of $\calO_F^{\times, +}$ following \cite[\S4]{DT}: a positive unit $u \in \calO_F^{\times, +}$ maps a local section $s$ of $\dot \omega_\tau$ to $\tau(u)^{-1/2} \cdot \langle u \rangle^*(s)$, where $\langle u \rangle$ is defined by \eqref{E:action <u>}.  It is clear that this action factors through $\calO_F^{\times, +} / (K \cap \calO_F^\times)^2$.

Similarly, for each $p$-adic embedding $\tau$ of $F$, we define
\begin{align}
\nonumber
\dot \varepsilon_\tau & := \big( \wedge^2_{\calO_F\otimes_\ZZ \calO_{\calM_\gothc^\PR} } \calH^1_\dR(\calA^\PR/{\calM_\gothc^\PR}) \big) \otimes_{\calO_F\otimes_\ZZ \calO_{\calM_\gothc^\PR}, \tau\otimes1} \calO_{\calM_\gothc^\PR}
\\
\label{E:dot varepsilon}
&
\cong (\gothc \gothd^{-1}\otimes_{\ZZ} \calO_{\calM_\gothc^\PR} ) \otimes_{\calO_F\otimes_\ZZ \calO_{\calM_\gothc^\PR}, \tau \otimes 1} \calO_{\calM_\gothc^\PR},
\end{align}
where the canonical isomorphism is induced by the universal polarization as in \cite[Lemma 2.5]{reduzzi-xiao}. While $\dot\varepsilon_\tau$ is a trivial line bundle as seen in \eqref{E:dot varepsilon}, it carries a nontrivial action of $\calO_F^{\times,+} / (K \cap \calO_F^\times)^2$ given as follows: a positive unit $u \in \calO_F^{\times, +}$ maps a local section $s$ of $\dot \varepsilon_\tau$ to $\tau(u)^{-1} \cdot \langle u \rangle^*(s)$.

It is proved in \cite[Theorem 2.9]{reduzzi-xiao} that the sheaf of relative differentials $\Omega^1_{\calM^\mathrm{PR} / \calO}$ admits a canonical Kodaira--Spencer filtration whose successive subquotients are given by
\begin{equation}
\label{E:KS filtration}
\dot  \omega_{\tau}^{\otimes 2}  \otimes_{\calO_{\calM^{\PR}}} \dot \varepsilon_{\tau}^{\otimes-1}\quad 
\textrm{ for } \tau \in \Sigma.
\end{equation}


\subsection{Toroidal compactifications}
\label{S:toroidal}
We need to slightly generalize the notation of cusps from standard references, such as \cite[\S 3]{dimitrov}, \cite[\S 1.6]{kisin-lai}, and \cite{rapoport}.
For an ideal $\mathfrak{c}\in\mathfrak{C}$, a \emph{$K^p$-cusp} is an equivalence class of tuples $\calC = (\gotha, \gothb, L, i, j, \lambda, \alpha)$, where
\begin{itemize}
\item
$\gotha$ and $\gothb$ are two fractional ideals of $F$, relatively prime to $p$,  such that $\gotha \gothb^{-1} = \gothc$,
\item
$L$ is an $\calO_F$-lattice of $F^2$ that sits in an exact sequence
\[
0 \to \gotha^{-1} \gothd^{-1} \xrightarrow{i} L \xrightarrow{j} \gothb \to 0,
\]
\item
$\lambda: \wedge^2_{\calO_F} L \xrightarrow{\sim}\gothc^{-1}\gothd^{-1}$ is an isomorphism of $\calO_F$-modules, and
\item
$\alpha$ is a $K^p/K(N)^p$-orbit of isomorphisms $(\calO_F/N\calO_F)^{\oplus 2} \xrightarrow{\sim} N^{-1}L/ L$ of $\calO_F$-modules, where $N$ is
a positive prime-to-$p$ integer such that $K^p$ contains $K(N)^p$ (as defined in \eqref{E:K(N)p}).
\end{itemize}
This definition does not depend on the choice of $N$.
The equivalence of the tuples is as explained in \cite[D\'efinition 3.1]{dimitrov} (plus requiring it to be compatible with the level structure $\alpha$).  In particular, for a fixed $(\gotha, \gothb, \lambda)$, $L$ is always non-canonically isomorphic to $\gotha^{-1} \gothd^{-1} \oplus \gothb$ by choosing a section of $j$, and such sections form a torsor under $\gotha\gothb\gothd$. So the essential choice of $\alpha$ is parametrized by the double cosets
\begin{equation}
\label{E:double cosets}
\big( \begin{smallmatrix}
1 & \calO_F/N\calO_F \\ 0 & 1
\end{smallmatrix} \big)\big\backslash\GL_2(\calO_F/N\calO_F) \big/ \big(K^p / K(N)^p\big).
\end{equation}
We write $(\gotha, \gothb, \lambda, [\gamma_\calC])$ for the corresponding cusp, where $\gamma  _\calC\in \GL_2(\calO_F/N\calO_F)$ is a representative of the double coset in \eqref{E:double cosets}.
We write $X_{[\gamma_\calC]}$ for the fractional ideal of $\calO_F$ contained in $\frac {1}{N}\gotha\gothb$ such that 
\[
\tfrac{1}{N}\gotha\gothb / X_{[\gamma_\calC]}   \cong \Big(\gamma_\calC\big( K^p/K(N)^p\big)\gamma_\calC^{-1} \Big)  \bigcap \Big( \begin{smallmatrix}
1 &\calO_F/N\calO_F \\0 & 1
\end{smallmatrix} \Big),
\]
and let $X_{[\gamma_\calC]}^{*+}$ denote the totally positive elements in $X_{[\gamma_\calC]}^*: = X_{[\gamma_\calC]}^{-1}\gothd ^{-1} \subseteq F \otimes_\QQ \RR$. 

To proceed, we fix  a smooth rational polyhedral
admissible cone decomposition $\Phi_{[\gamma_\calC]}$ of $X_{[\gamma_\calC]}^{*+}$ for each isomorphism
class of $K^p$-cusps. 
Now, one can apply the construction in \cite[\S 4--5]{rapoport} for the level $K^p = K(N)^p$ and then take invariants under $K^p / K(N)^p$-action,  (or make use of the more general reference \cite[\S 6]{lan}).
This way, we construct 
the toroidal compactification $\calM_\gothc^{\Ra, \tor}$ of $\calM_\gothc^{\Ra}$. Gluing over $\calM_\gothc^\Ra$ gives toroidal compactifications $\calM_\gothc^{\PR, \tor}$ and $\calM_\gothc^{\DP, \tor}$ of the $\calO$-schemes  $\calM_\gothc^{\PR}$ and $\calM_\gothc^{\DP}$.
The scheme $\calM_\gothc^{\PR, \tor}$ is proper and smooth over $\Spec \calO$. We set $\mathcal{M}^{\operatorname*{?,tor}}:=
{\textstyle\coprod\nolimits_{\mathfrak{c}\in\mathfrak{C}}}
\mathcal{M}_{\mathcal{\mathfrak{c}}}^{\operatorname*{?, tor}}$ for $?\in\{\DP, \PR, \Ra$\}.
The boundary $\dot \ttD:=\mathcal{M}^{?,\operatorname*{tor}}-\mathcal{M}^?$ is a
relative simple normal crossing divisor on $\mathcal{M}^{?,\operatorname*{tor}}$.

Let $\Sh^{?, \tor}$ denote the quotient of $\calM^{?,\tor}$ by the action of the group $\calO_F^{\times, +} / (K \cap \calO_F^\times)^2$.
Put $\ttD: = \Sh^{?,\tor} - \Sh^?$; it is the quotient of $\dot \ttD$ and it is a divisor with simple normal crossings.

For any $\tau \in \Sigma$, there are automorphic line bundles $\dot\omega_\tau^\tor$ and $\dot\varepsilon_\tau^\tor$ over $\calM^{\PR, \tor}$ constructed as in \cite[\S 2.10]{reduzzi-xiao}. To lighten the load on notation, we will simply write $\dot \omega_\tau$ and $\dot\varepsilon_\tau$ to denote these sheaves when no confusion arises.
Both these line bundles carry natural actions of $\calO_F^{\times, +} / (K \cap \calO_{F}^\times)^2$ as described earlier, and they descend to line bundles over $\Sh^{\PR,\tor}$, which we denote by $\omega_\tau$ and $\varepsilon_\tau$ respectively.  We warn the reader that $\varepsilon_\tau$ may not be the trivial line bundle over $\Sh^{\PR,\tor}$; however $\varepsilon^{\mathbf 1}: = \otimes_{\tau \in \Sigma} \varepsilon_\tau$ is a trivial line bundle because the action of $\calO_F^{\times,+}$ on the tensor product of all $\dot \varepsilon_\tau$ is the na\"ive pullback.

We point out that, while the compactification depends on the choice of the smooth rational polyhedral cone decompositions, the cohomology groups $H^*(\Sh^{\PR, \tor}, \omega ^\kappa)$ and $H^*(\Sh^{\PR, \tor}, \omega ^\kappa(-\ttD))$ we define below are independent of this choice (up to canonical isomorphisms) because the derived pushforward of $\omega^\kappa$ and $\omega^\kappa(-\ttD)$ from the compactification associated to a finer cone decomposition to a courser one are canonically isomorphic to $\omega^\kappa$ and $\omega^\kappa(-\ttD)$ (cf. \cite[Lemma~7.1.1.4]{lan} and \cite[Proposition~7.5]{lan2}.) 

\begin{remark}
If one only cares about Hilbert modular forms of parallel weights, then defining them using the splitting model and the usual Deligne-Pappas model are the same.
More precisely, the line bundle $\varepsilon^{\bf 1}$ on $\Sh^{\PR, \tor}$ is the pullback of an automorphic line bundle $\varepsilon^{(\bf 1), \DP}$ on $\Sh^{\DP, \tor}$, given by extending $\wedge^g \omega_{A/S}$ to the toroidal compactification, and then descending along the Galois cover $\Sh^{\DP, \tor} \to \calM^{\DP, \tor}$.
Then for any $k \in \ZZ$, pulling back along the natural map $\pi: \Sh^{\PR, \tor} \to \Sh^{\DP, \tor}$ gives an isomorphism 
\begin{equation}
\label{E:splitting model = DP model}\pi^*: H^0\big(\Sh^{\DP, \tor}, \omega^{{\bf k}}\big) \xrightarrow{\ \cong\ } H^0\big(\Sh^{\PR, \tor}, \omega^{{\bf k}}\big).
\end{equation}
Indeed, since $\Sh^{\DP, \tor}$ is normal (\cite[Corollaire 2.3]{deligne-pappas}) and $\pi$ is birational, the Zariski Main Theorem says that $\pi_* \calO_{\Sh^{\PR, \tor}} = \calO_{\Sh^{\DP, \tor}}$. The projection formula then proves \eqref{E:splitting model = DP model}.
\end{remark}

\begin{notation}
\label{N:simplify notation}
For the rest of this paper, if no confusion arises, we will drop the superscript $\PR$ appearing in the schemes introduced in the previous subsections. In particular, 
we set $\calA : = \calA^\PR$, $\calM: = \calM^\PR$, and $\Sh: = \Sh^\PR$. These are schemes over $\calO$, and we denote their special fibers by $\calA_\FF$, $\calM_\FF$, and $\Sh_\FF$ respectively.
\end{notation}

\subsection{Geometric Hilbert modular forms}
\label{S:geometric HMF}
A \emph{paritious weight} $\kappa$ is a tuple $((k_\tau)_{\tau \in \Sigma}, w) \in \ZZ^{\Sigma} \times \ZZ$ such that $k_\tau \equiv w \pmod 2$ for every $\tau \in \Sigma$.
We say that $\kappa$ is \emph{regular} if moreover $k_\tau >1$ for all $\tau\in\Sigma$.
For an integer $n$, we write ${\bf n} = (n, \dots, n)$.

For $\kappa = ((k_\tau)_{\tau \in \Sigma}, w)$ a paritious weight, we define
\[
\dot \omega^\kappa: = \bigotimes_{\tau \in \Sigma} \big(\dot  \omega_{\tau}^{\otimes k_\tau} \otimes_{\calO_{\calM^{\PR, \tor}}}\dot  \varepsilon_{\tau}^{\otimes (w-k_\tau)/2} \big) \textrm{\quad and\quad }\omega^\kappa: = \bigotimes_{\tau \in \Sigma} \big( \omega^{\otimes k_\tau}_{\tau} \otimes_{\calO_{\Sh^{\PR, \tor}}} \varepsilon_{\tau}^{\otimes (w-k_\tau)/2} \big).
\]
They are line bundles over $\calM^{\tor}$ and $\Sh^{ \tor}$, respectively.  We remind the reader that these line bundles (and also $\calM^{ \tor}$ and $\Sh^{\tor}$) depend on the fixed choice of an ordering of the $\tau_{\gothp_i, j}^l$'s.

We point out that our definition of $\omega^{((k_\tau)_{\tau \in \Sigma}, w)}$ is consistent with \cite{emerton-reduzzi-xiao, reduzzi-xiao}, but is equal to the $\omega^{((k_\tau)_{\tau \in \Sigma}, w+2)}$ in \cite{tian-xiao2} due to different normalizations.
In fact, the usual modular forms of weight $k$ (for $F =\QQ$) should be, rigorously speaking, of weight $(k, k-2)$ in our notation, i.e. a section of $\omega^k \otimes \varepsilon^{-1}$. However, $\varepsilon$ is canonically a trivial bundle over the modular curve, it is often omitted from the discussion.

For example, by the Kodaira--Spencer filtration on $\Omega^1_{\calM/\calO}$ we just recalled before \S\ref{S:toroidal} with successive subquotients \eqref{E:KS filtration}, we deduce canonical isomorphisms (extended to the toroidal compactification; cf \cite[2.11(4)]{tian-xiao2})
\begin{equation}
\label{E:KS isomorphisms}
KS: \wedge^g_{\calO_{\calM^\tor}} \Omega^1_{\calM^\tor / \calO}(\dot \ttD) \cong \dot \omega^{(\boldsymbol{2}, 0)} \quad \textrm{and} \quad KS: \wedge^g_{\calO_{\Sh^\tor}} \Omega^1_{\Sh^\tor / \calO}(\ttD) \cong  \omega^{(\boldsymbol{2}, 0)}.
\end{equation}

A \emph{(geometric) Hilbert modular form} over a Noetherian $\calO$-algebra $R$ of level $K^p$ and paritious weight $\kappa$ is an element of the finite $R$-module $H^0(\Sh_R^{ \tor}, \omega^\kappa_R)$,
where the subscript $R$ indicates base change to $R$ over $\calO$.
Such a form is called \emph{cuspidal} if it belongs to the submodule $H^0(\Sh_R^{ \tor}, \omega^\kappa_R(-\ttD))$.
By the K\"ocher principle (\cite[Th\'eor\`em~7.1]{DT}), if $[F:\QQ]>1$, we have
\[
H^0(\calM_R^{ \tor}, \dot \omega_R^\kappa) = H^0(\calM_R, \dot \omega_R^\kappa), \textrm{ and hence } H^0(\Sh_R^{ \tor}, \omega_R^{\kappa}) = H^0(\Sh_R, \omega^\kappa_R).
\]

Fix a positive prime-to-$p$ integer $N$ such that $K^p \supseteq K(N)^p$.
At each cusp of $\calM_\gothc$ labeled by the tuple $\calC = (\gotha, \gothb, L, i, j,\lambda, \alpha)$, there is a
Tate object (see e.g. \cite[proof of Th\'eor\`em~7.2]{dimitrov}) with additional structure $(\Tate_{\frak{a},\frak{b}},\lambda_\can,\alpha_\can, \underline\eta_\can,\underline\zeta_\can)$ defined over a suitable subring of the ring of formal power series $\calO[[q^\xi:\xi\in X_{[\gamma_\calC]}^+]]$ (this involves a choice of cone decomposition that we omit here since it will not play any role).
Here $X_{[\gamma_\calC]}^+$ is the subset of totally positive elements in $X_{[\gamma_\calC]}$,  and  $\underline \eta_\can$ (resp. $\underline \zeta_\can$) is a collection of canonical bases of $\dot \omega_\tau$ (resp. $\dot \varepsilon_\tau$) for the Tate abelian variety.
Evaluating a (geometric) Hilbert modular form $f$ at this Tate object, we obtain the $q$-expansion of $f$ at this cusp: \begin{equation}
\label{E:q-expansion general}
f(\Tate_{\frak{a},\frak{b}},\lambda_\can, \alpha_\can, \underline\eta_\can,\underline\zeta_\can)=\sum_{\xi\in X_{[\gamma_\calC]}^+} a_\xi(f,\calC, \Tate_{\frak{a},\frak{b}})\, q^\xi
\in\calO/(\varpi^m)[[q^\xi:\xi\in X_{[\gamma_\calC]}^+]].
\end{equation} 
We point out that when changing the tuple $\calC = (\gotha, \gothb, L, i, j,\lambda, \alpha)$ representing the cusp to an equivalent (cf. \cite[D\'efinition~3.1]{dimitrov}) tuple  $\calC_t = (t\gotha, t\gothb, L, ti, tj,\lambda, \alpha)$ for $t \in F^\times$ and hence $X_{[\gamma_{\calC_t}]}^+ = t^2 X_{[\gamma_\calC]}^+$, we have
\begin{equation}
\label{E:q-expansion equivalent cusp}
a_\xi(f,\calC, \Tate_{\gotha, \gothb}) = \prod_{\tau}\tau(t)^{-k_\tau} \cdot a_{t^2\xi}(f,\calC_t, \Tate_{t\gotha, t\gothb})
\end{equation}

\subsection{Tame Hecke operators}
\label{S:tame Hecke operators}
We recall the definition of tame Hecke operators from \cite[\S 2.14]{reduzzi-xiao}.
Let $\calS$ denote the a finite set of places of $F$ containing the archimedean places,  $p$-adic places, and all the places $\gothq$ where $K_\gothq \neq \GL_2(\calO_{F_\gothq})$.
We define the Hecke operator $T_\gothq$ for $\gothq \notin \calS$ in the same way as in  \cite[\S2.14]{reduzzi-xiao}. (Note that our new setup allows more general level structures in place of the usual $\Gamma_{00}(\calN)$-level structure, but one can adapt the  construction in {\it loc. cit.} to our new tame level structure formulation.)
We take this opportunity to point out a mistake in the formulas \cite[(2.14.1)]{reduzzi-xiao} and \cite[(2.2.9.1)]{emerton-reduzzi-xiao}. The factor $\frac{1}{\mathrm{Nm}^F_\QQ(\gotha)}$ there should be removed. This is because the Kodaira--Spencer isomorphisms used in {\it loc. cit.} are canonical (so there is no additional twists for Hecke operators).

To benefit the readers, we make explicitly this Hecke operator on the level of $q$-expansions.
We fix $\gothc \in \gothC$ and a positive element $\vartheta_\gothc \in F^\times$ such that  $\gothc'\vartheta_\gothc = \gothc \gothq$ for some $\gothc' \in \gothC$.
Consider a cusp of $\calM_\gothc$ labeled by $\calC = (\gotha, \gothb, L, i, j,\lambda, \alpha)$. Then the image of $\pi_2(\pi_1^{-1}(-))$ of this cusp consists of two (possibly isomorphic) cusps:
\[
\calC' =  (\gotha, \vartheta_\gothc \gothb\gothq^{-1}, L', i,\vartheta_\gothc j, \vartheta_\gothc \lambda, \alpha') \quad \textrm{and} \quad
\calC'' = (\gotha \gothq, \vartheta_\gothc\gothb, L'', i,\vartheta_\gothc j, \vartheta_\gothc \lambda, \alpha'') ,
\]
where $L'$ and $L''$ are pullback and pushout of $L$ along the natural inclusions $\gothb \subset \gothb\gothq^{-1}$ and $\gotha^* \subset \gotha^* \gothq^{-1}$ respectively,  and $\alpha'$ and $\alpha''$ are the naturally induced level structures.
Then we have inclusions $X_{[\gamma_{\calC''}]} \subset \vartheta_\gothc X_{[\gamma_\calC]} \subset X_{[\gamma_{\calC'}]}$ preserving the positive cones, where the cokernel of each inclusion is isomorphic to $\calO_F/\gothq$.
For the natural Tate objects at these cusps and $\xi \in X_{[\gamma_\calC]}^+$, we have,
\begin{align}
\label{E:q-expansion Tp}
a_{\xi}&(T_\gothq(f),\calC, \Tate_{\gotha, \gothb}) = 
\prod_{\tau}\tau( \vartheta_\gothc)^{\frac{w-k_\tau}2}\cdot \Big(
(\NN(\gothq) \cdot a_{\vartheta_\gothc\xi}(f,\calC', \Tate_{\gotha,\vartheta_\gothc \gothb\gothq^{-1}}) + a_{\vartheta_\gothc\xi}(f,\calC'', \Tate_{\gotha \gothq, \vartheta_\gothc \gothb}) \Big)
\\ \nonumber
&= 
\prod_{\tau}\tau( \vartheta_\gothc)^{\frac{w-k_\tau+2}2}\cdot  a_{\vartheta_\gothc\xi}(f,\calC', \Tate_{\gotha,\vartheta_\gothc \gothb\gothq^{-1}}) + \prod_{\tau} \tau(\vartheta_\gothc)^{\frac{w+k_\tau}2} a_{\vartheta_\gothc^{-1}\xi}(f,\calC''_{\vartheta_\gothc^{-1}}, \Tate_{\vartheta_\gothc^{-1}\gotha \gothq, \gothb}) \Big).
\end{align}
Here the additional factor $\tau(\vartheta_\gothc)^{\frac{w-k_\tau}2}$ on the first line comes from different $\underline \zeta_\can$'s at different cusps.  The equality of the two lines follows from  \eqref{E:q-expansion equivalent cusp}, and $\calC''_{\vartheta_\gothc^{-1}} = (\vartheta_\gothc^{-1} \gotha \gothq, \gothb, L'', \vartheta_\gothc^{-1} i, j, \vartheta_\gothc \lambda, \alpha'')$ is obtained from $\calC''$ using the procedure just before \eqref{E:q-expansion equivalent cusp}.
(We shall see later that a normalization of the second line of \eqref{E:q-expansion Tp} is also valid for the Hecke operator $T_\gothp^\rmn$ we construct later.)

If for some prime-to-$p$ finite place $\gothq \in \calS$, the corresponding level structure $K_\gothq$ satisfies
\[
\begin{pmatrix}
1 & \calO_{F, \gothq}\\ 0 & 1 
\end{pmatrix} \subset K_\gothq \subset \begin{pmatrix}
\calO_{F, \gothq}^\times & \calO_{F, \gothq}\\ \gothq\calO_{F, \gothq}  & \calO_{F, \gothq} ^\times
\end{pmatrix},
\]then
there is a natural $U_\gothq$-operator associated to the double coset $K_\gothq \big(\begin{smallmatrix}
\varpi_\gothq &0\\0&1
\end{smallmatrix} \big)K_\gothq$, acting on the cohomology groups  $H^*(\Sh^\tor, \omega^\kappa)$ and $H^*(\Sh^\tor, \omega^\kappa(-\ttD))$.

We also define a Hecke operator $S_\gothq^\rmn$ for any finite place of $F$ (allowing $\gothq$ to divide $p$).
For each $\gothc \in \gothC$, let $\gothc'' \in \gothC$ denote the fixed representative of the strict ideal class of $\gothc \gothq^2$, so that $\vartheta_{\gothc^2}\gothc\gothq^2 = \gothc'' $ of a positive element $\vartheta_{\gothc^2} \in F^\times$.
We define an isomorphism
\[
\xymatrix@R=0pt{ \dot 
S_\gothq: \calM_\gothc \ar[r]^-\cong & \calM_{\gothc'}
\\
(A, \lambda, \alpha, \underline \scrF) \ar@{|->}[r] & (A' :=A \otimes_{\calO_F} \gothq^{-1}, \lambda' , \alpha', \underline \scrF'),
}
\]
where $\alpha'$ and $\underline \scrF'$ are naturally induced level structure and filtrations, and 
\[
\lambda': A' \otimes_{\calO_F} \gothq^{-1} \gothc \xrightarrow{1 \otimes  \vartheta_{\gothc^2}} A \otimes_{\calO_F} \gothc \gothq \xrightarrow{\lambda} A^\vee \otimes_{\calO_F} \gothq \cong (A \otimes_{\calO_F}\gothq^{-1})^\vee \]
is the induced polarization. 
This map extends to the cusps, by sending the $K^p$-cusp with label $\calC = (\gotha, \gothb, L, i, j, \lambda, \alpha)$ to the $K^p$-cusp with label $\calC'' = (\gotha \gothq, \vartheta_{\gothc^2} \gothb \gothq^{-1}, L \otimes \gothq^{-1}, i \otimes \gothq^{-1}, j \otimes \gothq^{-1} \vartheta_{\gothc^2},  \vartheta_{\gothc^2}\lambda, \alpha')$, where $\alpha'$ is the obvious induced level structure.
Since $X_{[\gamma_{\calC''}]} = \vartheta_{\gothc^2} X_{[\gamma_\calC]}$, we may take the cone decomposition $\Phi_{[\gamma_{\calC''}]}$ of $X_{[\gamma_{\calC''}]}^{*+}$ to be the one induced by that at $\calC$, namely $\vartheta_{\gothc^2}^{-1} \Phi_{[\gamma_{\calC}]}$. This way,  the isomorphism $\dot S_\gothq$ extends to an isomorphism $\calM_\gothc^{\tor} \xrightarrow{\sim} \calM_{\gothc'}^{ \tor}$.
Moreover, the moduli description (together with the induced isogeny on the universal semi-abelian varieties) induces a natural pullback morphism
\[\dot 
S_\gothq:  \dot
S_\gothq^*\dot \omega'^\tor \to \dot \omega'^\tor,
\]
where the prime indicates the similar sheaf of relative differentials on $\calM_{\gothc'}^{ \tor}$.
Similarly, we have a natural morphism $\dot S_\gothq: S_\gothq^*\dot \varepsilon'^\tor_\tau \to \dot \varepsilon_\tau^\tor$.
From this, for a paritious weight $\kappa = ((k_\tau)_{\tau \in \Sigma}, w)$,  we define a natural (properly normalized) isomorphism 
\begin{equation}
\label{E:Sgothq at p}
\dot 
S_\gothq^\rmn: \dot S_\gothq^*\dot \omega'^\kappa \cong \dot S_\gothq^*(\dot \omega'^{\kappa - ({\mathbf 2}, 0)}) \otimes S_\gothq^* \big(\wedge^g \Omega_{\calM_{\gothc'}} \big) \xrightarrow{\NN(\gothq)^{-w} \cdot S_\gothq^* \otimes 1} \dot \omega^{\kappa - ({\mathbf 2}, 0)} \otimes \big( \wedge^g \Omega_{\calM_{\gothc}}\big) \cong \dot \omega^\kappa.
\end{equation}
(Note that when $\gothq$ is a $p$-adic place, it is essential to normalize the map $\dot S_{\gothq}$ so that it is an isomorphism.)
Taking quotient of this map by the action of $\calO_F^{\times,  +} / (K \cap \calO_F^\times)^2$ and taking the union over all $\gothc \in \gothC$, we obtain isomorphisms
 \[
S_\gothq : \Sh^{ \tor} \xrightarrow{\sim} \Sh^{\PR, \tor}
\quad \textrm{and} \quad S_\gothq^\rmn: H^*(\Sh_R^{\tor}, \omega'^\kappa_R(-\ttD)) \to H^*(\Sh_R^{\tor}, \omega^\kappa_R(-\ttD))
\]
for any $\calO$-algebra $R$.
We can define similar action of $S_\gothq^\rmn$ on $H^*(\Sh_R^{ \tor}, \omega^\kappa_R)$.
We point out that, although the map $\dot S_\gothq^\rmn$ depend on the choice of the element $\vartheta_\gothc$ (up to multiplication by $\calO_F^{\times, +}$), but the induced action $S_\gothq^\rmn$ is independent of this choice.

Finally, we make explicit this map $\dot S_\gothq^\rmn$ on the $q$-expansions.
To better present the theory, let $\gothc' \in \gothC$ be representative of the class of $\gothc \gothq$ in the strict ideal class, so that $ \vartheta_\gothc \gothc' = \gothc \gothq$ for a positive element $\vartheta_\gothc \in F^\times$.  Consider the two cusps $\calC$ and $\calC''$ above, then we have (for $\xi \in X_{[\gamma_\calC]}^+$)
\begin{align}
\label{E:q-expansion Sqn}
a_\xi(S_\gothq^\rmn(f), \calC', \Tate_{\gotha, \gothb}) &  =  \NN(\gothq)^{-w} \cdot \prod_{\tau } \vartheta_{\gothc^2}^{\frac{w-k_\tau}2}\cdot  a_{\vartheta_{\gothc^2}\xi} (f,\calC'', \Tate_{\gotha \gothq, \vartheta_{\gothc^2}\gothb \gothq^{-1}})
\\
\nonumber & = \NN(\gothq)^{-w}\prod_{\tau}\big(\vartheta_{\gothc^2}^{\frac{w-k_\tau}2} \vartheta_\gothc^{k_\tau}\big) \cdot a_{\vartheta_\gothc^{-2}\vartheta_{\gothc^2}\xi} (f,\calC''_{\vartheta_\gothc^{-1}}, \Tate_{\vartheta_\gothc^{-1}\gotha \gothq, \vartheta_{\gothc}^{-1}\vartheta_{\gothc^2}\gothb \gothq^{-1}}).
\end{align}

Now, we call the polynomial ring
\[
\TT_{\calS}^{\univ}:=\calO[t_\gothq, s_\gothq^\rmn; \gothq \textrm{ a place of }F \textrm{ not in }\calS]
\]
the \emph{universal tame Hecke algebra}. For an $\calO$-algebra $R$, it acts on the cohomology groups 
$H^{j}(\Sh^{\tor}_R,\omega^{\kappa}_R)$ and $H^{j}(\Sh^{\tor}_R,\omega^{\kappa}_R(-\ttD))$ via the assignment $t_\mathfrak{q}\mapsto T_\mathfrak{q}$ and $s_\gothq^\rmn \to S_\gothq^\rmn$.

\begin{lemma}
\label{L:hecke operator duality}
Let $\kappa = ((k_\tau)_{\tau \in \Sigma}, w)$
be a paritious weight and let $\gothq \notin \calS$ be a place of $F$. Since  $\omega^{({\bf 0}, 2)}$ is a canonically trivial vector bundle on $\Sh^\tor$ (as explained in \S\ref{S:toroidal}), we have the following compatibility of Hecke operator with such trivialization and Serre duality.
\begin{enumerate}
\item For any integer $w_0$, we have the following commutative diagram
\begin{equation}
\label{E:adding w}
\xymatrix@C=32pt{
H^*(\Sh_m^\tor, \omega_m^{((k_\tau), w)}) \ar[d]_\cong  \ar[r]^-{\NN(\gothq)^{w_0} \cdot  T_\gothq}&
H^*(\Sh_m^\tor, \omega_m^{((k_\tau), w)}) \ar[d]_\cong \ar[r]^-{S_\gothq^\rmn}&
H^*(\Sh_m^\tor, \omega_m^{((k_\tau), w)}) \ar[d]_\cong 
\\
H^*(\Sh_m^\tor, \omega_m^{((k_\tau), w+2w_0)})  \ar[r]^-{T_\gothq}&
H^*(\Sh_m^\tor, \omega_m^{((k_\tau), w+2w_0)}) \ar[r]^-{S_\gothq^\rmn}&
H^*(\Sh_m^\tor, \omega_m^{((k_\tau), w+2w_0)}).
}
\end{equation}
\item
We define $\kappa^\vee: = ((2-k_\tau)_{\tau \in \Sigma}, w)$ to be the \emph{dual weight} of $\kappa$. (Note that we do not change the normalizing weight $w$.)
The triviality of $\omega^{({\bf 0}, 2)}$ in \S \ref{S:toroidal} identifies the Serre dual of $\omega^\kappa$ with
\begin{equation}
\label{E:serre dual of sheaves}
\xymatrix{
\bigwedge^g \Omega^1_{\Sh^{ \tor}} \otimes \big( \omega^{\kappa} \big)^{-1} \ar[r]^-{\eqref{E:KS isomorphisms}}_-{\cong} & \omega^{({\bf 2}, 0)}(-\ttD) \otimes \omega^{-\kappa}\cong \omega^{\kappa^\vee}(-\ttD)
}
\end{equation}
the dual line bundle.
Then the Serre duality 
\begin{equation}\label{E:Serre duality}
H^i(\Sh^\tor, \omega^\kappa_m) \cong H^{g-i}\big(\Sh^\tor, \omega^{\kappa^\vee}(-\ttD) \otimes \varpi^{-m}\calO/ \calO\big)^\vee
\end{equation}
intertwines the action of $T_\gothq$ on the first term and the transpose $T_\gothq^\vee$ on the second term.
\end{enumerate}
\end{lemma}
\begin{proof}
(1) The commutativity of the left square follows from the fact that, for the Hecke correspondence $\Sh^\tor \xleftarrow{\pi_1} \Sh(\gothq)^\tor \xrightarrow{\pi_2} \Sh^\tor$ (cf. \cite[2.14]{reduzzi-xiao}) the maps $\pi_2^*\omega'^{((k_\tau), w)} \to \pi_1^*\omega^{((k_\tau), w)}$ and $\pi_2^*\omega'^{((k_\tau), w+2w_0)} \to \pi_1^*\omega^{((k_\tau), w+2w_0)}$ are differed by the factor $\NN(\gothq)^{w_0}$.  The commutativity of the right square follows from the fact that $\dot S_\gothq^\rmn: \dot S_\gothq^*\dot \omega'^{((k_\tau), w)} \to \dot \omega^{((k_\tau), w)}$ is independent of $w$ (as we normalized in \eqref{E:Sgothq at p}).

(2)
Following the proof of \cite[Proposition~7.3]{edixhoven}, the Lemma is a consequence of the commutative diagram
\begin{small}
\[
\xymatrix{
H^i(\Sh_m^\tor, \omega_m^\kappa) \ar[d]^{\pi_2^*} \ar[r]^-\SD  & H^{g-i}\big(\Sh_m^\tor, \wedge^g\Omega^1_{\Sh_m^{\tor}} \otimes (\omega_m^\kappa)^{-1}\big)^\vee \ar[r]^-{\eqref{E:serre dual of sheaves}}\ar[d]^{(\pi_{2,*})^\vee} & H^{g-i}(\Sh(\gothq)_m^\tor, \omega_m^{\kappa^\vee}(-\ttD))^\vee
\ar[d]^{(\pi_{2,*})^\vee}
\\
H^i(\Sh(\gothq)_m^\tor,\pi_2^* \omega_m^\kappa) \ar[d]^{T_\gothq^*} \ar[r]^-\SD  & H^{g-i}\big(\Sh(\gothq)_m^\tor, \wedge^g\Omega^1_{\Sh(\gothq)_m^{\tor}} \otimes (\pi_2^*\omega_m^\kappa)^{-1}\big)^\vee\   \ar[r]^-{\pi_2^*\eqref{E:serre dual of sheaves}}\ar[d]^{\NN(\gothq)^w \cdot (T_\gothq^*)^\vee} & H^{g-i}(\Sh(\gothq)_m^\tor, \pi_2^* \omega_m^{\kappa^\vee}(-\ttD))^\vee
\ar[d]^{(T_\gothq^*)^\vee}
\\
H^i(\Sh(\gothq)_m^\tor,\pi_1^* \omega_m^\kappa) \ar[d]^{\pi_{1,*}} \ar[r]^-\SD  & H^{g-i}\big(\Sh(\gothq)_m^\tor, \wedge^g\Omega^1_{\Sh(\gothq)_m^{\tor}} \otimes (\pi_1^*\omega_m^\kappa)^{-1}\big)^\vee\ \ar[r]^-{\pi_1^*\eqref{E:serre dual of sheaves}}\ar[d]^{(\pi_1^*)^\vee} & H^{g-i}(\Sh(\gothq)_m^\tor, \pi_1^*\omega_m^{\kappa^\vee}(-\ttD))^\vee
\ar[d]^{(\pi_1^*)^\vee}
\\
H^i(\Sh_m^\tor, \omega_m^\kappa) \ar[r]^-\SD  & H^{g-i}\big(\Sh_m^\tor, \wedge^g\Omega^1_{\Sh_m^{ \tor}} \otimes (\omega_m^\kappa)^{-1}\big)^\vee \ar[r]^-{\eqref{E:serre dual of sheaves}} & H^{g-i}(\Sh_m^\tor, \omega^{\kappa^\vee}_m(-\ttD))^\vee.
}
\]\end{small}Here $\mathrm{SD}$ stands for Serre duality \eqref{E:Serre duality}. The Hecke correspondence $\Sh^\tor \xleftarrow{\pi_1} \Sh(\gothq)^\tor \xrightarrow{\pi_2} \Sh^\tor$ is as defined in \cite[\S2.14]{reduzzi-xiao}.
\end{proof}

\subsection{Generalized partial Hasse invariants on splitting models}
\label{S:partial Hasse invariants}
We recall some constructions from \cite[\S 3]{reduzzi-xiao}. 
For each $\tau = \tau_{\gothp_i,j}^l$ with $l \neq 1$, multiplication by $\varpi_i$ induces a well-defined morphism:
\[
\xymatrix@R=0pt{
m_{\varpi_i,j}^{(l)}: \calF_{\gothp_i,j}^{(l)} / \calF_{\gothp_i,j}^{(l-1)}
\ar[r] &
\calF_{\gothp_i,j}^{(l-1)} / \calF_{\gothp_i,j}^{(l-2)}\\
 z\ar@{|->}[r] & [\varpi_i](\tilde z),
}
\]
where $\tilde z$ is a lift to $\calF_{\gothp_i,j}^{(l)}$ of the local section $z$. Hence $m_{\varpi_i,j}^{(l)}$ induces a section
\[
\dot h_\tau \in H^0(\calM_\FF, \dot \omega_{\tau_{\gothp_i, j}^l}^{\otimes-1} \otimes \dot \omega_{\tau_{\gothp_i, j}^{l-1}})
\]
that is invariant under the action of $\calO_F^{\times,+} / (K \cap \calO_F^\times)^2$, and therefore descend to a section $h_\tau \in H^0(\Sh_\FF, \omega^{\otimes-1}_{\tau_{\gothp_i, j}^l} \otimes \omega_{\tau_{\gothp_i, j}^{l-1}})$ (\cite[Construction~3.3]{reduzzi-xiao}).

For $\tau = \tau_{\gothp_i,j}^1$, a morphism
\[
\mathrm{Hasse}_{\varpi_i,j}: \dot \omega_{\tau_{\gothp_i,j}^1} = \calF_{\gothp_i, j}^{(1)} \longto
\omega^{(p)}_{\calA_\FF/\calM_\FF,\gothp_i,j-1} /  \big( \calF_{\gothp_i,j-1}^{(e_i- 1)}\big)^{(p)} \cong  \dot \omega_{\tau_{\gothp_i,j -1}^{e_i}}^{\otimes p}
\]
is constructed in \cite[Construction~3.6]{reduzzi-xiao} as follows: let $z$ be a local section of $\dot  \omega_{\tau_{\gothp_i,j}^1}$; since it is annihilated by $[\varpi_i]$ acting on $\calH^1_\dR(\calA_\FF/ \calM_\FF)_{\gothp_i,j}$, $z$ belongs to $[\varpi_i]^{e_i-1} \cdot \calH^1_\dR(\calA_\FF/ \calM_\FF)_{ \gothp_i,j}$.
Write $z = [\varpi_i]^{e_i-1} z'$ for a local section $z'$ of $ \calH^1_\dR(\calA_\FF/ \calM_\FF)_{\gothp_i,j}$ 
(note this $z'$ belongs to $\calH^1_\dR$ but it might not belong to $\omega$). 
We define $\mathrm{Hasse}_{\varpi_i,j}(z)$ to be the image of $V_{\gothp_i,j}(z')$ in 
$\omega^{(p)}_{\calA_\FF/\calM_\FF,\gothp_i,j-1} /  \big( \calF_{\gothp_i,j-1}^{(e_i- 1)}\big)^{(p)}$.
The homomorphism $\mathrm{Hasse}_{\varpi_i,j}$ is well defined and it induces a section
\[
\dot h_{\tau_{\gothp_i,j}^1} \in H^0(\calM_\FF, \dot \omega_{\tau_{\gothp_i,j}^1}^{\otimes-1} \otimes 
\dot \omega_{\tau_{\gothp_i, j-1}^{e_i}}^{\otimes p})
\]
which descends to a section $h_{\tau_{\gothp_i,j}^1} \in H^0(\Sh_\FF, \omega^{\otimes-1}_{\tau_{\gothp_i,j}^1} \otimes \omega^{\otimes p}_{\tau_{\gothp_i, j-1}^{e_i}})$ (\cite[Construction~3.6]{reduzzi-xiao}). Note also that the action of $\calO_F^{\times, +}$ on $\dot \omega^{-1}_{\tau^1_{\gothp_i, j}} \otimes \dot \omega^{\otimes p}_{\tau^{e_i}_{\gothp_i, j-1}}$ is just the simple pullback (without the twist by powers of $\tau(u)$ for $u \in \calO_F^{\times, +}$). 

We remark that the total Hasse invariant $h$ is the product of some powers of $h_\tau$ for $\tau \in \Sigma$. For a more explicit expression, see \cite[Lemma~3.8]{reduzzi-xiao}.

\subsection{Goren--Oort stratification of $\Sh_\FF$}
\label{S:Goren-Oort stratification}
For each $p$-adic embedding $\tau \in \Sigma$, denote by $X_\tau$ the zero locus of the generalized Hasse invariant $h_\tau$ on $\Sh_\FF$.
In general, for a subset $\ttT\subseteq\Sigma$, we set $X_\ttT := \cap _{\tau \in \ttT} X_\tau$, with the convention
that if $\ttT$ is the empty set, this intersection is interpreted to be the entire space $\Sh_\FF$.
We define $\dot X_\tau$ and $\dot X_{\ttT}$ on $\calM_\FF$ similarly.
These $X_\ttT$ (resp. $\dot X_\ttT$) form the \emph{Goren--Oort stratification} of $\Sh_\FF$ (resp. $\calM_\FF$).
We write
\[
X_\ttT^\circ : = X_\ttT \backslash \cup_{\ttT' \supsetneq \ttT} X_{\ttT'} \quad \textrm{and} \quad \dot X_\ttT^\circ : = \dot X_\ttT \backslash \cup_{\ttT' \supsetneq \ttT} \dot X_{\ttT'}
\]
for the corresponding open stratum.

We remark that, although the generalized partial Hasse invariants $h_\tau$ and $\dot h_\tau$ depend on the choice of uniformizers $\varpi_i$, their zero loci $X_\tau$ and $\dot X_\tau$ do not. Moreover, each section $\dot h_\tau$ extends to the fixed toroidal compactification
$\calM_{\FF}^{\mathrm{tor}}$ of $\calM_{\FF}$ and, by \cite[Theorem 3.10]{reduzzi-xiao}, each subscheme $\dot X_{\tau}$ is disjoint from the cusps, which are ordinary points of
the moduli space.

\begin{proposition}
\label{T:smoothness of ramified GO strata}
The following properties hold.
\begin{enumerate}
 \item The closed subschemes $X_\tau$ (resp. $\dot X_\tau$) are proper and smooth divisors with simple normal crossings on $\Sh_\FF$ (resp. $\calM_\FF$).
In particular, $ X_\ttT$ and $\dot X_\ttT$ for any proper subset $\ttT \subset \Sigma$ are proper and smooth varieties over $\FF$.

\item The union of the closed subschemes $X_\tau$ (resp. $\dot X_\tau$) when $\tau$ varies over all embeddings of the form $\tau_{\gothp_i,j}^l$ with $l\neq 1$ coincides with the complement of $\Sh^{\Ra}_\FF$ (resp. $\calM^\Ra_\FF$) in $\Sh_\FF$ (resp. $\calM_\FF$).
\item The ordinary locus $\Sh^{\mathrm{ord}}_\FF$ (resp. $\calM^{\mathrm{ord}}_\FF$) of $\Sh_\FF$ (resp. $\calM_\FF$) coincides with the complement of the set 
$\cup_{\tau\in\Sigma}X_\tau$ (resp. $\cup_{\tau\in\Sigma}\dot X_\tau$).
\end{enumerate}
\end{proposition}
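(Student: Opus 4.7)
My plan is to verify the three claims by reducing them to an étale-local computation on $\calM_\FF$ at a closed point, using the explicit formulas for the generalized partial Hasse invariants $\dot h_\tau$ from Section~\ref{S:partial Hasse invariants} together with the Kodaira-Spencer filtration on $\Omega^1_{\calM^\PR/\calO}$ recalled in Section~\ref{S:moduli HBAS}. Since the quotient morphism $\calM_\FF \to \Sh_\FF$ is étale, the statements for $X_\tau$ and $X_\ttT$ will follow formally from those for $\dot X_\tau$ and $\dot X_\ttT$.

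For (1), properness comes first: since $\dot h_\tau$ extends to the toroidal compactification $\calM^\tor_\FF$ and its zero locus is disjoint from the boundary $\dot \ttD$ by \cite[Theorem~3.10]{reduzzi-xiao}, the divisor $\dot X_\tau$ is already proper. For smoothness and the simple normal crossings property I would work at a closed point $x \in \calM_\FF$. Smoothness of $\calM^\PR$ over $\calO$, combined with the Kodaira-Spencer filtration whose graded pieces are $\dot\omega_\tau^{\otimes 2}\otimes \dot\varepsilon_\tau^{\otimes -1}$ indexed by $\tau \in \Sigma$, allows me to choose a regular system of parameters $(t_\tau)_{\tau \in \Sigma}$ at $x$, with each $t_\tau$ controlling the deformation in the $\tau$-direction. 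Unwinding the definitions of Section~\ref{S:partial Hasse invariants} then shows that $\dot h_\tau$ coincides, up to a local unit, with $t_\tau$: for $\tau = \tau_{\gothp_i,j}^l$ with $l \neq 1$ this is a direct reading of the action of $[\varpi_i]$ on the filtration step $\calF_{\gothp_i,j}^{(l)}/\calF_{\gothp_i,j}^{(l-1)}$, while for $\tau = \tau_{\gothp_i,j}^1$ it comes from the local description of the Verschiebung used to build the Hasse-type invariant. Consequently each $\dot X_\tau$ is cut out by one element of a regular system of parameters, and distinct $\dot X_\tau$'s meet transversally, which also yields smoothness of $\dot X_\ttT$ for every $\ttT \subsetneq \Sigma$.

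For (2), I would characterize $\calM^\Ra_\FF$ as the locus where $\omega_{\calA/\calM_\FF}$ is locally free of rank one as a module over $\calO_F \otimes_\ZZ \calO_{\calM_\FF}$. Decomposing prime-by-prime and embedding-by-embedding, this is equivalent to requiring that, for every $(\gothp_i, j)$, the operator $[\varpi_i]$ act on $\omega_{\calA/\calM_\FF, \gothp_i,j}$ as a cyclic nilpotent of order $e_i$; by the very construction of the splitting model this is the same as each connecting map $m_{\varpi_i,j}^{(l)}$ (for $l = 2, \dots, e_i$) being an isomorphism, i.e., as the non-vanishing of $\dot h_{\tau_{\gothp_i,j}^l}$ for every $l \neq 1$. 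Taking complements gives the claimed equality. For (3), I would use that the ordinary locus is contained in the Rapoport locus; on $\calM^\Ra_\FF$ the filtration $\underline\scrF$ is canonically determined, and the sections $\dot h_{\tau_{\gothp_i,j}^1}$ reduce to the classical partial Hasse invariants built from the Verschiebung, whose simultaneous non-vanishing is by definition the condition that $\calA[p^\infty]$ be ordinary. Combining this with (2) identifies $\calM^\ord_\FF$ with the complement of $\bigcup_{\tau \in \Sigma}\dot X_\tau$.

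The main technical obstacle is the local-coordinate analysis underlying (1), namely identifying each $\dot h_\tau$, up to a unit, with the Kodaira-Spencer parameter indexed by $\tau$. This requires careful bookkeeping of the action of $[\varpi_i]$ on the universal filtration steps and of the Verschiebung on $\calH^1_\dR(\calA/\calM_\FF)$, and is essentially the content of the explicit local computations in \cite[\S 3]{reduzzi-xiao}. Once this is in place, parts (2) and (3) reduce to routine manipulations of the universal filtration, and part (1) for $X_\tau$ on $\Sh_\FF$ follows by étale descent.
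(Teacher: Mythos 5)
Your proposal is correct and follows essentially the same route as the paper: reduce to $\calM_\FF$ by \'etale descent, characterize the Rapoport locus via the isomorphy of the maps $m_{\varpi_i,j}^{(l)}$ for $l\neq 1$, and deduce (3) from (2) together with the facts that ordinary implies Rapoport and that the $\mathrm{Hasse}_{\varpi_i,j}$ recover the classical partial Hasse invariants there. The only difference is that for part (1) the paper simply cites \cite[Theorem~3.10]{reduzzi-xiao}, whereas you sketch the underlying local Kodaira--Spencer coordinate computation that that reference carries out.
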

\begin{proof}
It is enough to prove the proposition over $\calM_\FF$. The first statement is contained in \cite[Theorem 3.10]{reduzzi-xiao}. To prove the second statement we can work with
closed points of $\calM_{\FF}$, since the Rapoport condition on
the Lie algebra is an open condition. Let then $(A,\lambda
,\alpha^p,\mathscr{\underline{F}})$ be a $k$-point of $\calM_{\FF}$, for
$k$ an extension field of $\FF$. The abelian variety $A$ with RM satisfies the Rapoport condition
if and only if multiplication by $\varpi_{i}$ induces an isomorphism
${\mathscr{F}_{\mathfrak{p}_{i},j}^{(l)}/\mathscr{F}_{\mathfrak{p}_{i}%
,j}^{(l-1)}\rightarrow\mathscr{F}_{\mathfrak{p}_{i},j}^{(l-1)}/\mathscr{F}%
_{\mathfrak{p}_{i},j}^{(l-2)}}$ for all $i$, all $j$, and all $l\neq 1$. This is clearly equivalent to the given condition on the divisors $\dot X_\tau$.
Statement (3) follows from the fact that the total Hasse invariant is the product of powers of $h_\tau$'s, as remarked at the end of \S\ref{S:partial Hasse invariants}.
\end{proof}

\section{Hecke operators at $p$ in characteristic $p^m$}
\label{Sec:section 3}
In this section, we construct the $T_\gothp$-operator acting on the cohomology of some automorphic line bundles with torsion coefficients.
We work exclusively with the Pappas--Rapoport splitting model, as indicated in Notation~\ref{N:simplify notation}. The general construction of Iwahori level structure on the splitting model is already explained in \cite{pappas-rapoport} (in a much more general setup). We here make it more explicit in our particular setup. Our construction of $T_\gothp$ is inspired by the work of B. Conrad \cite{conrad}. The essential main technical input is Proposition~\ref{P:Tp factor} whose proof we complete in the next section.
 
We fix a prime ideal $\mathfrak{p}\in \{\mathfrak{p}_1,\dots,\mathfrak{p}_r\}$; denote by $f$ its residual degree, and by $e$ its inertial degree. We write $\varpi_\gothp$ for the chosen uniformizer at $\gothp$ as in \S\ref{S:setup}. For every fractional ideal $\gothc\in\gothC$ we choose once and for all a positive isomorphism $\theta_\gothc:\gothc\gothp\simeq\gothc'$ of $\gothc\gothp$ with a (uniquely determined) fractional ideal $\gothc'\in\gothC$.
 
\subsection{Splitting models with Iwahori level structure}
\label{S:setup section 3}

For a fixed $\gothc\in\mathfrak{C}$ we define the corresponding splitting models with Iwahori level structure following \cite{pappas, pappas-rapoport}.

Let $\underline{\calM}_{\gothc, \theta_\gothc}(\mathfrak{p})$ denote the functor which assigns to a
locally noetherian $\mathcal{O}$-scheme $S$ the set of isomorphism classes of tuples 
$((A,\lambda,\alpha,\underline\scrF);(A',\lambda',\alpha',\underline\scrF');\phi, \psi)$ where: 
\begin{enumerate}
\item $[(A,\lambda,\alpha,\underline\scrF)]$ is an $S$-point of $\mathcal{M}_\gothc$,
\item $[(A',\lambda',\alpha',\underline\scrF')]$ is an $S$-point of $\mathcal{M}_{\gothc'}$,
\item $\phi:A\rightarrow A'$ and $\psi: A' \to A \otimes \gothc(\gothc')^{-1}$ are $\calO_F$-equivariant $S$-isogenies satisfying:
\begin{enumerate}
\item both $\phi$ and $\psi$ have degree $p^f$,
\item the compositions $\psi\circ\phi$  and $(\phi\otimes \gothc (\gothc')^{-1})\circ\psi$ are the natural isogenies $A \to A \otimes \gothc (\gothc')^{-1}$ and $A' \to A' \otimes \gothc (\gothc')^{-1}$ induced by $\calO_F \subseteq \gothp^{-1} \stackrel{ \theta_\gothc}\simeq \gothc (\gothc')^{-1} $,
 \item $\phi$ is compatible with the polarizations, \emph{i.e.}, $\phi\circ\lambda\circ\phi^{\vee}=\tilde\lambda'$, where $\tilde\lambda'$ is the map 
	$(A')^\vee \rightarrow A'\otimes\gothc$ induced by composing $\lambda'$ with the map $\gothc'\overset{\theta^{-1}_\gothc}\simeq\gothc\gothp\subset\gothc$,
 \item both $\phi$ and $\psi$ are compatible with the level structures, \emph{i.e.}, $\phi \circ \alpha = \alpha'$ and $\psi \circ \alpha' = \alpha \otimes \gothc(\gothc')^{-1}$,
 and
 \item $\phi$ and $\psi$ are compatible with the filtrations, \emph{i.e.}, for any indices $i\in\{1,\dots,r\}$ and $j\in\{1,\dots,f_i\}$ the morphisms of locally free $\calO_S$-modules 
	\[
\phi^{\ast}:\omega_{A'/S,\gothp_i,j} \rightarrow\omega_{A/S,\gothp_i,j} \textrm{   and   } {\psi}^{\ast}:\omega_{A/S,\gothp_i,j} \cong \omega_{A/S,\gothp_i,j}\otimes \gothc (\gothc')^{-1} \rightarrow\omega_{A'/S,\gothp_i,j}
	\]
 preserve the filtrations $\scrF_{\gothp_i,j}^{\bullet}$ and $\scrF_{\gothp_i,j}'^{\bullet}$.\\
\end{enumerate}
\end{enumerate}


We point out that $\phi$ and $\psi$ determine each other, but we need to keep both isogenies in the data to state the condition that $\phi^*$ and $\psi^*$ preserve the filtrations $\scrF_{\gothp_i, j}^\bullet$ and $\scrF'^\bullet_{\gothp_i, j}$. In particular, for $\gothp_i \neq \gothp$, 
\[
\scrF'^{(l)}_{\gothp_i,j} \xrightarrow{\
 \phi^*\ } \scrF^{(l)}_{\gothp_i,j}\xrightarrow{\
  \psi^*\ }
\scrF'^{(l)}_{\gothp_i,j}
\]
are isomorphisms for all $j = 1, \dots, f_i$ and $l = 1, \dots, e_i$.
Moreover, the construction also gives a commutative diagram:
\begin{equation}
\label{E:lambda phi psi commutative}
\xymatrix{
A^\vee \ar[r]^-\lambda & A \otimes \gothc\\
A'^\vee \ar[u]^{\phi^\vee} \ar[r]^-{\lambda'} & A' \otimes \gothc' \ar[u]^{\psi}.
}
\end{equation}

Without the additional filtrations, the moduli problem $\underline \calM_{\gothc, \theta_\gothc}(\gothp)$ is the same as the one in \cite[Definition 2.2.1]{pappas} and is hence representable by an $\calO$-scheme of finite type by \cite[Theorem 2.2.2]{pappas}.  Introducing the additional filtrations amounts to building a further Grassmannian bundle and requiring compatibilities with $\phi$ and $\psi$ is a closed condition. So $\underline \calM_{\gothc, \theta_\gothc}(\gothp)$ is represented by an $\calO$-scheme $\calM_{\gothc, \theta_\gothc}(\gothp)$ of finite type.

There are two natural forgetful maps $\pi_{1, \theta_\gothc}: \calM_{\gothc, \theta_\gothc} (\gothp) \to \calM_\gothc$ and $\pi_{2, \theta_\gothc}: \calM_{\gothc, \theta_\gothc} (\gothp) \to \calM_{\gothc'}$ induced by only keeping $(A, \lambda, \alpha, \underline \scrF)$ and $(A', \lambda', \alpha', \underline \scrF')$, respectively.
A different choice of positive isomorphism $\theta'_\gothc: \gothc\gothp \simeq \gothc'$ is of the form $u \theta_\gothc$ for some $u \in \calO_F^{\times,+}$.
Then there is a natural isomorphism $\Theta: \calM_{\gothc, \theta_\gothc}(\gothp) \xrightarrow{\sim} \calM_{\gothc, \theta'_\gothc}(\gothp)$ given by
\[
((A,\lambda,\alpha,\underline\scrF);(A',\lambda',\alpha',\underline\scrF');\phi, \psi) \longmapsto ((A,\lambda,\alpha,\underline\scrF);(A',u^{-1}\lambda',\alpha',\underline\scrF');\phi, \psi).
\]
It then follows that
\begin{equation}
\label{E:equivariance for Iwahori level}
\pi_{1, \theta_\gothc} = \pi_{1, \theta'_\gothc}\circ \Theta  \quad \textrm{and} \quad  \pi_{2, \theta_\gothc} =u \cdot \pi_{2, \theta'_\gothc}\circ \Theta
\end{equation}

Now the group $\calO_F^{\times,+}/(K \cap \calO_{F}^{\times})^2$ acts freely on $\calM_{\gothc, \theta_\gothc}(\mathfrak{p})$ (by acting simultaneously on $A$ and $A'$). We denote by $\Sh_\gothc(\mathfrak{p})$ the corresponding quotient. We set
$\calM(\gothp):=\coprod_{\gothc\in\gothC}\calM_{\gothc, \theta_\gothc}(\gothp)$ and $\Sh(\gothp):=\coprod_{\gothc\in\gothC}\Sh_\gothc(\gothp)$. 
Both $\pi_{1, \theta_\gothc}$ and $\pi_{2, \theta_\gothc}$ are equivariant for the action of $\calO_F^{\times,+}/(K\cap \calO_{F}^{\times})^2$ and induce maps:
\[
\pi_1 = (\pi_{1, \theta_\gothc}):\Sh(\mathfrak{p})\rightarrow\Sh \quad \textrm{and} \quad \pi_2 = (\pi_{2, \theta_\gothc}): \Sh(\mathfrak{p})\rightarrow\Sh.
\]
By \eqref{E:equivariance for Iwahori level}, the natural maps $\pi_1$ and $\pi_2 $ are independent of the auxiliary choice of the isomorphisms $\theta_\gothc$.

\begin{remark}
\label{R:alternative Gamma 0p}
When $p$ is \emph{unramified} in $F$ (or if we only consider the ordinary locus), the moduli space $\calM_\gothc(\gothp)$ can also be described as the moduli space of tuples $(A,\lambda,\alpha;C)/S$ where
\begin{itemize}
 \item $[(A,\lambda,\alpha)]$ is an $S$-point of $\calM_\gothc$ (the filtration datum on $\omega_{A/S}$ is uniquely determined in this case);
 \item $C$ is a finite flat, closed, $\calO_F$-stable $S$-subgroup scheme of $A[\gothp]$ of rank $p^f$ (which is necessarily isotropic with respect to the Weil paring induced by $\lambda$).
\end{itemize}
It is not clear whether such interpretation could be extended to the splitting model. For this reason, we defined $\calM_\gothc(\gothp)$ as a moduli space of isogenies.
\end{remark}

\begin{proposition}
\label{P:LCI}
The scheme $\calM_\gothc(\gothp)$ (resp. $\Sh_\gothc(\mathfrak{p})$) is a flat local complete
intersection of relative dimension $g$ over $\calO$. In particular, it
is Gorenstein, and hence Cohen--Macaulay. Moreover, the special fiber 
$\calM_{\gothc}(\gothp)_\FF$ (resp. $\Sh_\gothc(\mathfrak{p})_\FF$) is smooth outside a closed subscheme
of codimension 1, and $\calM_\gothc(\gothp)$ (resp. $\Sh_\gothc(\mathfrak{p})$) is normal.
\end{proposition}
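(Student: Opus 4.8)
The plan is to reduce every assertion to an explicit computation with a local model attached to $\calM_\gothc(\gothp)$, following the formalism of \cite{pappas} in the Pappas--Rapoport splitting setting. First I would construct a local model diagram $\calM_\gothc(\gothp)\xleftarrow{q}\widetilde{\calM}_\gothc(\gothp)\xrightarrow{\ell}M^{\mathrm{loc}}$ of $\calO$-schemes, with $q$ a torsor under a smooth group scheme and $\ell$ smooth and surjective; here $M^{\mathrm{loc}}$ classifies the $p$-adic Hodge filtration data carried by a point of $\calM_\gothc(\gothp)$, i.e.\ the $e_i$-step filtrations on $\omega_{A/S}$ and $\omega_{A'/S}$ at all primes above $p$ together with the maps induced by $\phi$ and $\psi$ (the existence of $\psi$, equivalently $\ker\phi\subseteq A[\gothp]$, and the compatibility \eqref{E:lambda phi psi commutative} are exactly what one needs to rigidify). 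Since ``$\calO$-flat local complete intersection of relative dimension $g$'', ``Gorenstein'', ``Cohen--Macaulay'' and ``normal'' are smooth-local on source and target, and since the non-smooth locus of the special fibre is compatible with smooth base change (and codimensions are preserved by the flat maps $q,\ell$), it then suffices to prove all the statements for $M^{\mathrm{loc}}$.

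Next I would use the decomposition $\calO_F\otimes_\ZZ\ZZ_p\cong\prod_{i=1}^r\calO_{F_{\gothp_i}}$ to split $M^{\mathrm{loc}}\cong\prod_{i=1}^r M^{\mathrm{loc}}_{\gothp_i}$, where $M^{\mathrm{loc}}_{\gothp_i}$ records the data at $\gothp_i$. For $\gothp_i\neq\gothp$ the containment $\ker\phi\subseteq A[\gothp]$ forces $\phi^\ast$ and $\psi^\ast$ to be mutually inverse filtered isomorphisms on the $\gothp_i$-parts, so $M^{\mathrm{loc}}_{\gothp_i}$ is canonically the local model of $\calM_\gothc$ at $\gothp_i$; by the smoothness of $\calM^\PR$ over $\calO$ (\cite[Theorem~2.9]{reduzzi-xiao}) this factor is smooth over $\calO$ of relative dimension $e_if_i$. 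Thus the factors away from $\gothp$ contribute only smoothness, and the whole problem reduces to the single factor $M^{\mathrm{loc}}_\gothp$, which should have relative dimension $ef$ over $\calO$.

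The heart of the argument is then an explicit description of $M^{\mathrm{loc}}_\gothp$. Choosing local bases adapted to the filtrations $\scrF^{(\bullet)}_{\gothp,j}$, $\scrF'^{(\bullet)}_{\gothp,j}$ and imposing the Eisenstein relations (each graded piece is killed by $[\varpi_\gothp]-\tau_{\gothp,j}^l(\varpi_\gothp)$), I expect $M^{\mathrm{loc}}_\gothp$ to be cut out, Zariski-locally, inside an affine space over $\calO$ by $ef$ equations of ``node type'' --- in the unramified case $e=1$ literally of the shape $x_jy_j-p$ up to units, cyclically twisted by the $V$-operator --- matching the local model of \cite{pappas}. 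Granting this, one checks that the special fibre $(M^{\mathrm{loc}}_\gothp)_\FF$ has pure dimension $ef$, so that the $ef$ equations together with $\varpi$ form a regular sequence in the ambient regular ring; this simultaneously gives $\calO$-flatness and the complete intersection property of relative dimension $ef$, whence $M^{\mathrm{loc}}_\gothp$ is Gorenstein and hence Cohen--Macaulay. A Jacobian computation then shows $M^{\mathrm{loc}}_\gothp$ is regular away from the locus where at least two node equations degenerate simultaneously, which has codimension $\geq 2$; being Cohen--Macaulay it satisfies $S_2$, so $R_1$ forces normality by Serre's criterion. Finally, the non-smooth locus of $(M^{\mathrm{loc}}_\gothp)_\FF$ is precisely the union of the divisors on which one node coordinate pair vanishes, hence of codimension $1$ in the special fibre. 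Re-multiplying by the smooth factors away from $\gothp$ yields all three assertions for $\calM_\gothc(\gothp)$; the assertions for $\Sh_\gothc(\gothp)$ follow because it is the quotient of $\calM_\gothc(\gothp)$ by the \emph{free} action of $\calO_F^{\times,+}/(\calO_{F,\mathcal N}^\times)^2$, so the quotient morphism is finite étale and all the listed properties descend.

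I expect the main obstacle to be the explicit determination of the equations of $M^{\mathrm{loc}}_\gothp$ in the \emph{ramified} case: one must disentangle the interaction of the isogeny $\phi$, its partner $\psi$, the multi-step filtrations $\scrF^{(\bullet)}_{\gothp,j}$, the polarization compatibility \eqref{E:lambda phi psi commutative}, and the Eisenstein relations, and then carry out the exact codimension count for the non-smooth locus of the special fibre and for the singular locus of the total space. Once these equations are pinned down, the commutative-algebra steps (flat local complete intersection $\Rightarrow$ Gorenstein $\Rightarrow$ Cohen--Macaulay, and Cohen--Macaulay $+\,R_1\Rightarrow$ normal) are purely formal.
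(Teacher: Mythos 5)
Your strategy --- reduce to $\calM_\gothc(\gothp)$ by finite \'etaleness of the quotient map, identify an explicit local model, exhibit the node equations, and then run the formal commutative algebra (regular sequence $\Rightarrow$ flat LCI of relative dimension $g$ $\Rightarrow$ Gorenstein $\Rightarrow$ Cohen--Macaulay; $R_1+S_2\Rightarrow$ normal; special fibre singular exactly where a coordinate pair vanishes, hence in codimension $1$) --- is essentially the paper's, and your expected answer ($ef$ independent node equations at $\gothp$, smooth factors elsewhere) is correct. The one methodological difference is how the moduli space is related to the local model: you propose a local model diagram with a torsor and a smooth surjection, whereas the paper computes the deformation functor of a closed point via crystalline deformation theory and then argues, following the end of \cite[\S3]{deligne-pappas}, that the induced map of formal completions $\mathscr{M}_{x_0}\to\mathscr{N}_{x_0}$ is an isomorphism because it is so on Zariski tangent spaces of special fibres, so the comparison map is \'etale. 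Either route is standard and both deliver the same conclusion.

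However, the step you defer as ``the main obstacle'' --- pinning down the equations of $M^{\mathrm{loc}}_\gothp$ in the ramified case --- is the actual content of the proof, so let me record how it closes. The point is that $\phi^*$ and $\psi^*$ descend to maps between the rank-two bundles $\scrH^{(l)}_{\gothp,j}$ and $\scrH'^{(l)}_{\gothp,j}$ (the $[\varpi_\gothp]-\tau_{\gothp,j}^l(\varpi_\gothp)$-torsion of $(\scrF^{(l-1)})^\perp/\scrF^{(l-1)}$ inside crystalline/de Rham cohomology), and on these both composites $\phi^*\circ\psi^*$ and $\psi^*\circ\phi^*$ are multiplication by $\tau_{\gothp,j}^l(\varpi_\gothp)$; moreover, modulo $p$ each of $\phi^*,\psi^*$ has kernel and image of rank exactly one, since the kernels of $\phi^*,\psi^*$ on $H^1_\dR$ are one-dimensional. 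Hence the tuple $(\scrH^{(l)},\scrH'^{(l)},\phi^*,\psi^*)$ is Zariski-locally isomorphic to the constant tuple $\big(\calO^2,\calO^2,\mathrm{diag}(1,\tau_{\gothp,j}^l(\varpi_\gothp)),\mathrm{diag}(\tau_{\gothp,j}^l(\varpi_\gothp),1)\big)$, and the moduli of compatible pairs of lines $(\scrF,\scrF')$ in this constant tuple is \'etale over $\calO[U,V]/(UV-\tau_{\gothp,j}^l(\varpi_\gothp))$. Two corrections to your guess: the right-hand side of each node equation is a conjugate of the uniformizer of $F_\gothp$, not of $p$ (these agree only when $e=1$), and the $ef$ equations are completely independent --- there is no cyclic twist by $V$ at the level of the local model (Frobenius/Verschiebung only enters later, for the partial Hasse invariants). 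With this explicit ring in hand, your concluding commutative-algebra steps go through verbatim.
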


\begin{proof}
It is enough to prove the statements for
$\calM_\gothc(\mathfrak{p})$, since the natural quotient map
$\calM_\gothc(\mathfrak{p})\rightarrow\Sh_\gothc(\mathfrak{p})$ is
finite and \'etale. The proof is then reduced to an argument in terms of local models similar to \cite[Th\'eor\`em 3.3]{deligne-pappas} and \cite[Theorem~3.3.1]{pappas} (cf. also \cite{pappas-rapoport} and \cite{sasaki}). To benefit the readers, we provide with more details.

The key is to show that for any closed point $x_0$ of $\calM_\gothc(\gothp)$ with finite residue field, there exists a (sufficiently small) open neighborhood $\calU \subseteq \calM_\gothc(\gothp)$ of $x_0$ that admits a map $\calU \to \calN$ \'etale at $x_0$, where $\calN$ is a fixed moduli problem we explain below, a.k.a. the local model. For an $\calO$-scheme $S_0$, we consider two maps 
\begin{equation}
\label{E:constant tuple}
u_{\mathfrak{p}
_{i},j}^{(l)}, v_{\gothp_i,j}^{(l)}: \calO_{S_0}^2 \to \calO_{S_0}^2
\end{equation}
given by explicit matrices
\[
u_{\mathfrak{p}_{i},j}^{(l)}:=\left\{
\begin{array}
[c]{cc}%
{
\begin{pmatrix}
1 & 0\\
0 & \tau_{\mathfrak{p}_{i},j}^{l}(\varpi_{i})
\end{pmatrix}} & \text{if }\gothp_i=\gothp\\
{\begin{pmatrix}
1 & 0\\
0 & 1
\end{pmatrix}}  & \text{if }\gothp_i\neq\gothp
\end{array}
\right.
\quad \textrm{and}
\quad
v_{\mathfrak{p}_{i},j}^{(l)}:=\left\{
\begin{array}
[c]{cc}%
{
\begin{pmatrix}
\tau_{\mathfrak{p}_{i},j}^{l}(\varpi_{i}) & 0\\
0 & 1
\end{pmatrix}}  & \text{if }\gothp_i=\gothp\\
{\begin{pmatrix}
1 & 0\\
0 & 1
\end{pmatrix}}   & \text{if }\gothp_i\neq\gothp.
\end{array}
\right.
\]
Our local model $\calN$ is taken to be the product of $\calO$-schemes $\prod_{i,j,l}\calN_{i,j}^{(l)}$, where each $\calN_{i,j}^{(l)}$ represents the functor on the category of locally noetherian $\mathcal{O}$-schemes sending a scheme $S_0$ to the
set of isomorphism classes of pairs $(\scrF,\scrF^{\prime})$ of invertible $\mathcal{O}_{S_0}$-subbundles of $\mathcal{O}%
_{S_0}^{2}$ satisfying $u_{\mathfrak{p}_{i},j}^{(l)}(\scrF^{\prime})\subseteq \scrF $ and $v_{\mathfrak{p}_{i},j}^{(l)}(\scrF)\subseteq \scrF^{\prime} $.
Each $G_{i,j}^{(l)}$ is clearly a closed subscheme of the product of two Grassmannians.
Explicitly, if $\gothp_i \neq \gothp$, $\calN_{i,j}^{(l)} \cong \PP^1_\calO$ (as $\scrF$ and $\scrF'$ determine each other). If $\gothp_i =\gothp$, one can show that (see e.g. the case $N=1$ of \cite[Proposition~4.2.2]{pappas}), over a small enough open subspace $\calU \subseteq \gothN_{i,j}^{(l)}$ where both $\scrF$ and $\scrF'$ are trivialized, there is an \'etale map 
\[
h: \calU \to \calO[U_{\gothp,j}^{(l)},V_{\gothp,j}^{(l)}] \big/ \big(U_{\gothp,j}^{(l)} V_{\gothp,j}^{(l)}-\tau_{\mathfrak{p},j}^{l}(\varpi_i)\big) 
\]
such that the maps 
$
u_{\gothp,j}^{(l)}: \scrF' \to \scrF$ and $ v_{\gothp,j}^{(l)}: \scrF \to \scrF '$
are given by multiplication by  $U_{\gothp,j}^{(l)}$ and $V_{\gothp,j}^{(l)}$, respectively.
So to sum up, $\gothN$ and hence $\mathcal{M}_{\gothc}(\mathfrak{p})$ (if we had proved that $\calN$ is a local model) are \'{e}tale locally isomorphic
to the spectrum of the following ring:
\[
\mathcal{O}[(W_{\gothp_i,j}^{(l)})_{\gothp_i\neq\gothp,j=1,\ldots,f_{i},l=1,\ldots,e_{i}
}]\otimes_{\mathcal{O}}
\left(
{\bigotimes\limits_{j=1}^{f}}
{\bigotimes\limits_{l=1}^{e}}
\frac{\mathcal{O}[U_{\gothp,j}^{(l)},V_{\gothp,j}^{(l)}]}{(U_{\gothp,j}%
^{(l)}V_{\gothp,j}^{(l)}-\tau_{\mathfrak{p},j}^{l}(\varpi_\gothp))}\right).
\]
This ring is visibly normal and is a flat complete intersection of relative dimension $g$ over $\calO$, which is also smooth outside a closed subscheme of codimension 1. This would then prove the Proposition.

Now, it remains to show that for each closed point $x_0$ of $\calM_\gothc(\gothp)$ with finite residue field,
\begin{enumerate}
\item[(i)] there is an open neighborhood $\calU$ and a map $\varphi: \calU \to \calN$, such that
\item[(ii)]
the map induces an isomorphism $\varphi_{x_0, *}: T_{x_0}\calU \to T_{x_0} \calN$ on the tangent space (so that the map $\varphi_{x_0}:\calU_{x_0}^\wedge \to \calN_{\varphi(x_0)}^\wedge$ on the completion is a closed embedding), and
\item[(iii)]
(by deformation theory) that the map $\varphi_{x_0}$ is an isomorphism.
\end{enumerate}
For this, we follow the general method in \cite[\S3.3]{deligne-pappas}, but we need to study the deformation functor for points on $\calM_\gothc(\gothp)$ following \cite[Theorem~2.9]{reduzzi-xiao}. 
We first quickly recall Grothendieck--Messing deformation theory of abelian varieties.  Let $S_0 \hookrightarrow S$ be a closed embedding of locally noetherian $\calO$-schemes whose ideal sheaf of definition $\calI$ satisfies $\calI^2 = 0$.
Let $\mathsf{Ab}_S$ denote the category of abelian varieties $A$ over $S$. For an abelian variety $A_0$ over $S_0$, let $\calH^1_\cris(A_0/S_0)_S$ denote the evaluation of the first relative crystalline cohomology.
Let $\mathsf{Ab}_{S_0}^+$ denote the category of abelian varieties $A_0$ over $S_0$ together with a lift $\hat \omega \subseteq \calH^1_\cris(A_0/S_0)_S$ of $\omega_{A_0/S_0} \subseteq \calH^1_\dR(A_0/S_0)\cong\calH^1_\cris(A_0/S_0)_{S_0}$.
The main theory of crystalline deformation theory (cf. \cite[pp.116--118]{grothendieck}, \cite[Chap. II \S1]{mazur-messing}) says that the natural functor 
\[
\xymatrix@R=0pt{
\mathsf{Ab}_S \ar[r] & \mathsf{Ab}_{S_0}^+
\\
A \ar@{|->}[r] & (A \times_{S}S_0, \omega_{A/S})
}
\]
is an equivalence of category.
In other words, to lift an abelian varieties $A_0$ over $S_0$ to $S$, it suffices to lift the corresponding sheaf of differentials. (Extending additional endomorphisms on $A_0$ amounts to requiring the lift of the sheaf of differentials is stable under the action of the endomorphisms.)

Now, let $S_0$ denote a noetherian $\calO$-scheme and let $\calO_{S_0}^\cris$ denote the structure sheaf on the crystalline site. Consider an $S_0$-valued point 
$x_0=((A, \lambda, \alpha, \underline\scrF ),(A', \lambda', \alpha', \underline\scrF' );\phi, \psi)$ of $\calM_\gothc(\gothp)$. Let 
$\calH_\cris^1(A/ S_0)$ denote the crystalline cohomology sheaf of $A$ over $S_0$. The action of $\calO_F$ on $A$ induces a natural direct sum decomposition:
\[
\calH_\cris^1(A/S_0) = \bigoplus_{i =1}^r \bigoplus_{j = 1}^{f_i}
\calH_\cris^1(A/S_0)_{\gothp_i, j},
\]
where $W(\FF_{\gothp_i}) \subseteq \calO_{F_{\gothp_i}}$ acts on $\calH_\cris^1(A/S_0)_{\gothp_i, j}$ via $\tau_{\gothp_i,j}$.
Moreover $\calH_\cris^1(A/S_0)_{\gothp_i, j}$ is a locally free module of rank two over
\[
\calO_{F_{\gothp_i}} \otimes_{W( \FF_{\gothp_i}), \tau_{\gothp_i,j}} \calO^\mathrm{cris}_{S_0} \cong \calO^\mathrm{cris}_{S_0} [x] / (E_{\gothp_i,j}(x)).
\]
The polarization $\lambda: A^\vee \to A \otimes_{\calO_F} \gothc$ induces a non-degenerate, symplectic pairing 
\begin{equation}
\label{E:pairing on Hcris}
\langle\cdot, \cdot \rangle\colon
\calH^1_\cris(A/S_0)_{\gothp_i,j} \times \calH^1_\cris(A/S_0)_{\gothp_i,j} \to \calO_{S_0}^\cris, \ \textrm{such that}
\end{equation}
\begin{equation}
\label{E:OF hermitian}
\langle ax , y \rangle  = \langle x,ay \rangle \quad \textrm{and} \quad \langle ax,x\rangle =0.
\end{equation}
for $a \in \calO_{F_{\gothp_i}}$ and $x,y \in \calH^1_\cris(A_0/S_0)_{\gothp_i, j}$ (as proved in \cite[(2.9.1--2)]{reduzzi-xiao}).

Analogous constructions, notations, and properties can be introduced for the abelian scheme $A'/S_0$.
The isogenies $\phi$ and $\psi$ define morphisms 
\[
\phi^*: \calH^1_\cris(A'/S_0)_{\gothp_i,j} \to \calH^1_\cris(A/S_0)_{\gothp_i,j}
\quad \textrm{and} \quad 
\psi^*: \calH^1_\cris(A/S_0)_{\gothp_i,j} \to \calH^1_\cris(A'/S_0)_{\gothp_i,j}.
\]
The commutative diagram \eqref{E:lambda phi psi commutative} implies that 
\[
\big\langle x, \phi^* (y) \big\rangle = \big\langle \psi^*(x),  y \big\rangle'
\]
for $x \in \calH^1_\cris(A/S_0)_{\gothp_i,j}$ and $y \in \calH^1_\cris(A'/S_0)_{\gothp_i,j}$.

Consider $S$ an infinitesimal deformation of $S_0$, i.e., $S_0 \hookrightarrow S$ is a closed immersion of locally noetherian $\calO$-schemes whose ideal of definition $\calI$ satisfies $\calI^2=0$.
Since $S$ is a PD-thickening of $S_0$, we can evaluate the crystalline cohomology over $S$ to obtain $\calH_\cris^1(A/S_0)_S$ and its direct summands $\calH_\cris^1(A/S_0)_{S,\gothp_i, j}$. 
For a subspace $\scrF$ of $\calH^1_\cris(A_0/S_0)_{S,\gothp_i, j}$, we use $\scrF^\perp$ to denote its orthogonal complement under the pairing \eqref{E:pairing on Hcris}.

The submodule $\omega_{A/S_0,\gothp_i,j}$ of $\calH^1_\cris(A/S_0)_{S_0,\gothp_i,j}$  is (maximal) isotropic for this pairing. In particular, $\scrF_{\gothp_i,j}^{(l)} \subset  (\scrF_{\gothp_i,j}^{(l)})^\perp$ for all $i,j,l$.
Moreover, we have
\[
\phi^*(\scrF'^{(l)}_{\gothp_i,j}) \subseteq \scrF_{\gothp_i,j}^{(l)}, \quad \phi^*\big((\scrF'^{(l)}_{\gothp_i,j})^\perp\big) \subseteq (\scrF_{\gothp_i,j}^{(l)})^\perp,\quad
\psi^*(\scrF_{\gothp_i,j}^{(l)}) \subseteq \scrF'^{(l)}_{\gothp_i,j}, \quad \textrm{and} \quad \psi^*\big((\scrF_{\gothp_i,j}^{(l)})^\perp\big) \subseteq (\scrF'^{(l)}_{\gothp_i,j})^\perp.
\]

Let
\[
\scrH_{\gothp_i,j}^{(l)}(A/S_0): = \Big\{ z \in (\scrF_{\gothp_i,j}^{(l-1)})^\perp \big/  \scrF_{\gothp_i,j}^{(l-1)}\; \Big|\;
[\varpi_i]z - \tau_{\gothp_i, j}^{l}(\varpi_i)z  =0 \Big\},
\]
and similarly define $\scrH_{\gothp_i,j}^{(l)}(A'/S_0)$ for each index $i,j,l\geq1$; so that we have natural morphisms
\[
\phi^*_\scrH: \scrH_{\gothp_i,j}^{(l)}(A'/S_0) \to \scrH_{\gothp_i,j}^{(l)}(A/S_0) \quad \textrm{and} \quad \psi^*_\scrH:
\scrH_{\gothp_i,j}^{(l)}(A/S_0) \to \scrH_{\gothp_i,j}^{(l)}(A'/S_0).
\]
 It is shown in the Claim of the proof of \cite[Theorem 2.9]{reduzzi-xiao} that $\scrH_{\gothp_i,j}^{(l)}(A/S_0)$ is a rank-two $\calO_{S_0}$-subbundle of $\calH^1_\mathrm{cris}(A/S_0)_{S_0,\gothp_i,j} \big/  \scrF_{\gothp_i,j}^{(l-1)}$. In particular, the sheaf $\scrF_{\gothp_i,j}^{(l)}/\scrF_{\gothp_i,j}^{(l-1)}$ is a rank-one $\calO_{S_0}$-subbundle of $\scrH_{\gothp_i,j}^{(l)}(A/S_0)$.

By crystalline deformation theory for abelian schemes, lifting the $S_0$-point $x_0$ associated to the isogeny $\phi:A\rightarrow A'$ to $S$ 
is equivalent to the following procedure, applied to each choice of $i$ and $j$:

\begin{enumerate}
\item 
Write $\scrH_{\mathfrak{p}_{i},j}^{(1)}(A/S_0)_S$ and $\scrH_{\mathfrak{p}_{i},j}^{(1)}(A'/S_0)_S$ for the kernel of the map $[\varpi_i]-\tau_{\gothp_i,j}^1(\varpi_i)$ acting on
$\calH^1_\cris(A/S_0)_{S,\gothp_i,j}$ and $\calH^1_\cris(A'/S_0)_{S,\gothp_i,j}$, respectively.  

Lift $\scrF_{\mathfrak{p}_{i},j}^{(1)}\subset\scrH_{\mathfrak{p}_{i},j}^{(1)}(A/S_0)$ and $\scrF_{\mathfrak{p}_{i},j}^{\prime(1)}\subset\scrH_{\mathfrak{p}_{i},j}^{(1)}(A'/S_0)$ to (isotropic) rank-one $\calO_S$-subbundles  $\mathscr{\tilde{F}}_{\mathfrak{p}_{i},j}^{(1)}\subset \scrH_{\mathfrak{p}_{i},j}^{(1)}(A/S_0)_S$ and
$\mathscr{\tilde{F}}_{\mathfrak{p}_{i},j}^{\prime(1)} \subset\scrH_{\mathfrak{p}_{i},j}^{(1)}(A'/S_0)_S$ satisfying the conditions $\phi^*(\mathscr{\tilde{F}}_{\mathfrak{p}_{i},j}
^{\prime(1)})\subseteq\mathscr{\tilde{F}}_{\mathfrak{p}_{i},j}^{(1)}$ and $\psi^*(\mathscr{\tilde{F}}_{\mathfrak{p}_{i},j}
^{(1)})\subseteq\mathscr{\tilde{F}}_{\mathfrak{p}_{i},j}^{\prime(1)}$. 
Here, the isotropic condition is automatic by condition~\eqref{E:OF hermitian}; see the proof of \cite[Theorem~2.9]{reduzzi-xiao} for more details.
\item Once lifts $\mathscr{\tilde{F}}_{\mathfrak{p}_{i},j}^{(l)}$ and
$\mathscr{\tilde{F}}_{\mathfrak{p}_{i},j}^{\prime(l)}$ are chosen for all $l\leq t-1
$, set
\[
\mathscr{H}_{\mathfrak{p}_{i},j}^{(t)}(A/{S_0})_S:=\Big\{ z \in (\tilde\scrF_{\gothp_i,j}^{(t-1)})^\perp \big/  \tilde\scrF_{\gothp_i,j}^{(t-1)}\; \Big|\;
[\varpi_i]z - \tau_{\gothp_i, j}^{t}(\varpi_i)z  =0 \Big\}
\]
and similarly for $\mathscr{H}_{\mathfrak{p}_{i},j}^{(t)}(A'/S_0)_S$; by the proof of Claim (1) in  \cite[Theorem~2.9]{reduzzi-xiao}, the sheaves $\mathscr{H}_{\mathfrak{p}_{i},j}^{(t)}(A/S_0)_S$ and $\mathscr{H}_{\mathfrak{p}_{i},j}^{(t)}(A'/S_0)_S$ are rank-two $\calO_S$-subbundles of $\calH^1_\cris(A/S_0)_{S, \gothp_i,j} / \tilde \scrF^{(t-1)}_{\gothp_i,j}$ and $\calH^1_\cris(A'/S_0)_{S, \gothp_i,j} / \tilde \scrF'^{(t-1)}_{\gothp_i,j}$, respectively.
Then we need to lift $\mathscr{F}_{\mathfrak{p}_{i},j}^{(t)} / \mathscr{F}_{\mathfrak{p}_{i},j}^{(t-1)}\subset
\mathscr{H}_{\mathfrak{p}_{i},j}^{(t)}(A/{S_0})$ and
$\mathscr{F}_{\mathfrak{p}_{i},j}^{\prime(t)}/ \mathscr{F}_{\mathfrak{p}_{i},j}^{\prime(t-1)}\subset\mathscr{H}_{\mathfrak{p}_{i},j}^{(t)}(A'/{S_0})$ to (isotropic) rank-one $\calO_S$-subbundles $\mathscr{\tilde{F}
}^{(t)/(t-1)}_{\mathfrak{p}_{i},j}$ and $\mathscr{\tilde{F}}_{\mathfrak{p}_{i}%
,j}^{\prime(t)/(t-1)}$ of $\mathscr{H}_{\mathfrak{p}_{i},j}^{(t)}(A/{S_0})_S$ and $\mathscr{H}_{\mathfrak{p}_{i},j}^{(t)}(A'/{S_0})_S$ respectively, such that
\[
\phi^*\big(\mathscr{\tilde{F}%
}_{\mathfrak{p}_{i},j}^{\prime(t)/(t-1)} \big)\subseteq\mathscr{\tilde{F}}_{\mathfrak{p}_{i},j}^{(t)/(t-1)} \quad \textrm{and} \quad \psi^*\big(\mathscr{\tilde{F}%
}_{\mathfrak{p}_{i},j}^{(t)/(t-1)} \big)\subseteq\mathscr{\tilde{F}}_{\mathfrak{p}_{i},j}^{\prime(t)/(t-1)}.
\]
Once again, this isotropic condition is automatic by condition~\eqref{E:OF hermitian}.
After this, we define $\mathscr{\tilde{F}%
}_{\mathfrak{p}_{i},j}^{(t)} $ to be the preimage of $\mathscr{\tilde{F}%
}_{\mathfrak{p}_{i},j}^{(t)/(t-1)}$ under the natural projection $(
\mathscr{\tilde{F}%
}_{\mathfrak{p}_{i},j}^{(t-1)})^\perp
\twoheadrightarrow (\mathscr{\tilde{F}%
}_{\mathfrak{p}_{i},j}^{(t-1)})^\perp/\mathscr{\tilde{F}%
}_{\mathfrak{p}_{i},j}^{(t-1)}$ and define $\mathscr{\tilde{F}%
}_{\mathfrak{p}_{i},j}^{\prime(t)} $ similarly.\\
\end{enumerate}

Following \cite[Lemma 3.3.2]{pappas}, we claim that for any locally noetherian $\calO$-scheme $S_0$, the tuple
\[
\big(\mathscr{H}_{\mathfrak{p}_{i},j}^{(l)}(A/S_0), \mathscr{H}_{\mathfrak{p}_{i}%
,j}^{(l)}(A'/S_0),\phi^*_\scrH, \psi^*_\scrH \big)
\]
is Zariski locally isomorphic (over $S_0$) to the ``constant'' tuple $(\mathcal{O}_{S_0}^{2},\mathcal{O}_{S_0}^{2}, u_{\mathfrak{p}
_{i},j}^{(l)}, v_{\gothp_i,j}^{(l)})$ of \eqref{E:constant tuple}. Indeed, this is obvious at a closed point of $S_0$ of characteristic zero.
At a closed point $s\in S_0$ of characteristic $p$, we observe that $\psi^*_\scrH \circ \phi^*_\scrH$ and $\phi^*_\scrH \circ \psi^*_\scrH$ are both zero; and the kernels of $\phi^*_\scrH$ and $\psi^*_\scrH$ are at most one-dimensional, as the kernels of $\phi^*$ and $\psi^*$ on $H^1_\dR(A'/k(s))_{\gothp_i,j}$ and $H^1_\dR(A/k(s))_{\gothp_i,j}$ respectively are one-dimensional.  It follows that the images of $\psi^*_\scrH$ and $\phi^*_\scrH$ are both one-dimensional.
The claim is clear from that.

The isomorphism of this claim (by the definition of the moduli problem $\calN$) gives, for any given closed point $x_0$ of $\calM_\gothc(\gothp)$ in characteristic $p$,  a map $\varphi: \calU \to \calN$ we sought in (i).
Moreover, applying the discussion of deformation theory to the case of $S_0 = \Spec \kappa(x_0) \hookrightarrow S = \Spec \kappa(x_0)[\epsilon]/(\epsilon^2)$,  descriptions (1) and (2) of the deformation functor implies that $\varphi$ induces an isomorphism at the level of the Zariski tangent spaces at $x_0$ of the special fibers of $\mathscr{M}
_{x_0}$ and $\mathscr{N}_{\varphi(x_0)}$. 
This shows (ii).
Finally, the argument at the end of \cite[\S3]{deligne-pappas} (again using the description of the deformation theory above) implies that $\varphi_{x_0}$ is an isomorphism of formal schemes, and therefore $\varphi$ is \'etale at $x_0$. 
This completes the proof that $\calN$ is a local model of $\calM_\gothc(\gothp)$ and concludes the proof of this Proposition.
\end{proof}

\begin{notation}
\label{N:stratification Iwahori}
The proof of the above Proposition suggests us to define a stratification of $\calM(\gothp)_\FF$, following \cite{helm,goren-kassaei}.
The study of certain geometric structure of these strata is the key ingredient in proving our main technical result Proposition~\ref{P:ample=>small support} in \S \ref{Sec:section 4}.

For two subsets $\ttS, \ttS' \subseteq \Sigma_\gothp$ such that $\ttS\cup \ttS' = \Sigma_\gothp$, we write $\dot Y_{\ttS, \ttS'}$ for the subscheme of $\calM(\gothp)_\FF$ where
\begin{itemize}
\item
$\phi^*:  \scrF'^{(l)}_{\gothp_i,j} / \scrF'^{(l-1)}_{\gothp_i,j} \to  \scrF^{(l)}_{\gothp_i,j} / \scrF^{(l-1)}_{\gothp_i,j}$ vanishes if $\tau_{\gothp_i,j}^{l} \in \ttS$, and 
\item
$\psi^*:  \scrF^{(l)}_{\gothp_i,j} / \scrF^{(l-1)}_{\gothp_i,j} \to  \scrF'^{(l)}_{\gothp_i,j} / \scrF'^{(l-1)}_{\gothp_i,j}$ vanishes if $\tau_{\gothp_i,j}^{l} \in \ttS'$.
\end{itemize}
Note that $\phi^*\circ \psi^*$ and $\psi^*\circ \phi^*$ vanishes modulo $p$ by the moduli problem; so we need the condition $\ttS\cup \ttS' = \Sigma_\gothp$ otherwise $\dot Y_{\ttS, \ttS'}$ is nonempty.

We write $Y_{\ttS, \ttS'} \subseteq \Sh(\gothp)_\FF$ for the quotient of $\dot Y_{\ttS, \ttS'}$ by the action of $\calO_F^{\times, +} / (K \cap \calO_F^\times)^2$.
\end{notation}
\begin{lemma}
\label{C:strata iwahori}
\begin{enumerate}
\item
There are $3^{\#\Sigma_\gothp}$ closed strata $\dot Y_{\ttS, \ttS'}$.

\item
For another pair $\ttS_1, \ttS'_1 \subseteq \Sigma_\gothp$ such that $\ttS_1 \cup \ttS'_1 = \Sigma_\gothp$, we have \[
\label{E:intersection YttSttS'}
\dot Y_{\ttS, \ttS'} \cap  \dot Y_{\ttS_1, \ttS'_1} = \dot  Y_{\ttS\cup \ttS_1, \ttS' \cup \ttS'_1}.
\]
\item
Each $\dot  Y_{\ttS, \ttS'}$ is a smooth variety over $\FF$ of dimension
\[
g - (\#\ttS + \#\ttS'-\#\Sigma_\gothp).
\]

\item
The irreducible components of $\calM(\gothp)_\FF$ are exactly those given by $\dot  Y_{\ttS, \Sigma_\gothp \backslash \ttS}$ for $\ttS$ a subset of $\Sigma_\gothp$.
\item
The open strata are given by
\[
\dot Y_{\ttS, \ttS'}^\circ: = \dot  Y_{\ttS, \ttS'} \big\backslash \big( \cup_{(\ttS_1, \ttS'_1) \supsetneq (\ttS, \ttS')}  Y_{\ttS_1, \ttS'_1} \big),
\]
where $(\ttS_1, \ttS'_1) \supsetneq (\ttS, \ttS')$ means $\ttS_1 \supseteq \ttS$ and $\ttS'_1 \supseteq \ttS'$, and the equalities of sets cannot hold simultaneously.
\end{enumerate}
Similar statements hold for the strata $Y_{\ttS, \ttS'} \subseteq \Sh(\gothp)_\FF$.

\end{lemma}
\begin{proof}
(1) and (2) are immediate from the definition.

(3) follows from the local model computation in the proof of Proposition~\ref{P:LCI} that $\dot Y_{\ttS, \ttS'}$ is \'etale locally isomorphic to the spectrum of the following ring
\[
\FF[W_{\gothp_i,j}^{(l)}]_{\gothp_i\neq\gothp,j=1,\ldots,f_{i},l=1,\ldots,e_{i}%
}\otimes_{\FF} \bigotimes_{\tau_{\gothp, j}^l \in \Sigma_\gothp}
\begin{cases}
\FF[U_{\gothp,j}^{(l)},V_{\gothp,j}^{(l)}]/(U_{\gothp,j}
^{(l)}) & \textrm{if } \tau_{\gothp, j}^l \in \ttS,
\\
\FF[U_{\gothp,j}^{(l)},V_{\gothp,j}^{(l)}]/(V_{\gothp,j}
^{(l)}) & \textrm{if } \tau_{\gothp, j}^l \in \ttS',
\\
\FF[U_{\gothp,j}^{(l)},V_{\gothp,j}^{(l)}]/(U_{\gothp,j}
^{(l)}, V_{\gothp,j}
^{(l)}) & \textrm{if } \tau_{\gothp, j}^l \in \ttS \cap \ttS'.
\end{cases}
\]
It follows that $\dot Y_{\ttS, \ttS'}$ is smooth of the said dimension.

(4) and (5) are immediate corollaries of (2) and (3). The statements for $Y_{\ttS, \ttS'} \subseteq  \Sh(\gothp)_\FF$ follows from taking quotient by $\calO_F^{\times, +} / (K \cap \calO_F^\times)^2$.
\end{proof}

\begin{remark}
It seems that these strata $Y_{\ttS, \ttS'}$ are likely to be isomorphic to certain iterated $\PP^1$-bundles over (the special fiber of) some other quaternionic Shimura varieties. One expects that certain variants of the arguments in \cite{helm,tian-xiao} can be applied to our situation. 
\end{remark}


We have the following result.

\begin{proposition}
\label{P: fflat}
The morphisms of $\calO$-schemes $\pi_{1},\pi_{2}:\Sh(\gothp)\rightarrow\Sh$ are finite and flat over the ordinary
locus of $\Sh(\gothp)$.
\end{proposition}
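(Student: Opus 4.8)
The plan is to prove the claim in three steps: first, quasi-finiteness of $\pi_1$ and $\pi_2$ over the ordinary locus, by a direct analysis of the geometric fibers; second, the passage from quasi-finiteness to finiteness, using that $\pi_1$ and $\pi_2$ are projective; and third, flatness, by a miracle-flatness argument.

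For the first step, observe that an $\calO_F$-linear isogeny preserves ordinariness, so the ordinary locus of $\Sh(\gothp)$ is exactly $\pi_1^{-1}(\Sh^{\ord}) = \pi_2^{-1}(\Sh^{\ord})$, where $\Sh^{\ord}\subseteq\Sh$ is the open ordinary locus; hence it is enough to show that the geometric fibers of $\pi_1$ (resp.\ of $\pi_2$) over ordinary points are finite. Here I would exploit that the ordinary locus lies inside the Rapoport locus (Proposition~\ref{T:smoothness of ramified GO strata}(3)), over which $\calM^\PR \xrightarrow{\ \sim\ } \calM^\DP$ (cf.\ \ref{S:moduli HBAS}): this forces the Pappas--Rapoport filtrations $\underline\scrF,\underline\scrF'$ to be uniquely determined by the rest of the data, and makes condition~(3)(e) of \ref{S:setup section 3} automatic, since the $\calO_F$-linear maps $\phi^*,\psi^*$ preserve the canonical filtrations. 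Thus the geometric fiber of $\pi_1$ over an ordinary point is naturally in bijection with the set of $\calO_F$-stable closed subgroup schemes $C\subseteq A[\gothp]$ of order $p^f$ that are isotropic for the $\lambda$-Weil pairing (one then recovers $A'=A/C$, $\phi$ the quotient isogeny, and $\psi$ the complementary isogeny prescribed by~(3)(d)). Since $A$ is ordinary, $A[\gothp]$ is, over an algebraically closed field, an extension of a finite \'etale module scheme of order $p^f$ over $\FF_\gothp:=\calO_F/\gothp$ by one of multiplicative type of the same order; such a group scheme possesses only finitely many closed subgroup schemes of order $p^f$ (parametrized by a finite subset of $\PP^1(\FF_\gothp)$), and each is automatically isotropic as the Weil pairing is alternating. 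This gives quasi-finiteness of $\pi_1$ over $\Sh^{\ord}$; running the same argument with the dual isogeny $\psi$ in place of $\phi$ handles $\pi_2$.

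For the second step, recall that $\pi_1$ and $\pi_2$ are projective (\ref{S:setup section 3}); their base changes along the open immersion $\Sh^{\ord}\hookrightarrow\Sh$ remain projective, and a quasi-finite proper morphism is finite --- so $\pi_1,\pi_2\colon\Sh(\gothp)^{\ord}\to\Sh^{\ord}$ are finite. For the third step I would apply miracle flatness: $\Sh^{\ord}$ is smooth over $\calO$, hence regular, while $\Sh(\gothp)^{\ord}$ is open in the Cohen--Macaulay scheme $\Sh(\gothp)$ (Proposition~\ref{P:LCI}); since a finite morphism has $0$-dimensional fibers, it remains to verify the numerical condition $\dim\calO_{\Sh(\gothp),x}=\dim\calO_{\Sh,\pi_1(x)}$ at each ordinary point $x$. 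By Proposition~\ref{P:LCI} and Corollary~\ref{C:strata iwahori} both $\Sh(\gothp)$ and $\Sh$ are flat over $\calO$ of relative dimension $g$ with fibers of pure dimension $g$, hence equidimensional of dimension $g+1$; together with the finiteness of $\pi_1$ on the (dense open) ordinary locus --- which preserves the dimension of point-closures --- and the dimension formula for these universally catenary schemes, this yields the required equality. Hence $\pi_1$, and by the same token $\pi_2$, is finite flat over $\Sh^{\ord}$.

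The step I expect to be the main obstacle is the fiber analysis in the first step: one must be sure that enlarging to the Iwahori-level moduli --- a moduli of \emph{isogenies} of HBAS equipped with Pappas--Rapoport filtrations, rather than of bare subgroup schemes --- does not create any extra moduli over the ordinary locus, which is precisely why the reduction to the Rapoport locus (where $\calM^\PR\cong\calM^\DP$ and the filtrations are rigid) is indispensable. Over the non-ordinary locus this breaks down entirely: the fibers of $\pi_1$ can become positive-dimensional, for instance unions of projective lines as recalled in the introduction, which is exactly the obstruction that forces the construction of $T_\gothp$ in Section~\ref{Sec:section 3} to rely on the dualizing trace rather than the naive finite-flat trace.
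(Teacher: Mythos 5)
Your proposal is correct, and its first two steps (identifying the ordinary fiber of $\pi_1$ with the set of $\calO_F$-stable isotropic subgroup schemes of $A[\gothp]$ of order $p^f$ after using the Rapoport condition to discard the filtrations, analyzing that set via the connected--\'etale sequence, and upgrading quasi-finite $+$ proper to finite) coincide with the paper's argument. Where you genuinely diverge is the flatness step. The paper computes the \emph{scheme-theoretic} fiber of $\pi_1$ over an ordinary geometric point: in characteristic $p$ it is $\Spec k\sqcup(\FF_\gothp\otimes_\ZZ\mu_{p/k})$, hence non-reduced of length $1+p^f$, matching the $1+p^f$ \'etale points in characteristic $0$; constancy of this rank over the reduced base then gives flatness by \cite[V, Lemme 1.13]{deligne-rapoport}. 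You instead invoke miracle flatness, using regularity of $\Sh^{\ord}$ and the Cohen--Macaulayness of $\Sh(\gothp)$ from Proposition~\ref{P:LCI}. Both routes work: yours buys you freedom from the fiber-length computation (mere quasi-finiteness suffices), at the cost of importing the local-model input of Proposition~\ref{P:LCI} and of the dimension bookkeeping you sketch at the end. To make that bookkeeping airtight you should note explicitly that every irreducible component of $\Sh(\gothp)^{\ord}$ dominates a component of $\Sh^{\ord}$ --- this is what turns the inequality $\dim\calO_{\Sh(\gothp),x}\le\dim\calO_{\Sh,\pi_1(x)}$ from finiteness into an equality, and it follows from the $\calO$-flatness of $\Sh(\gothp)$ (each component meets the generic fiber, where $\pi_1$ is finite \'etale). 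Two cosmetic points: the parenthetical ``parametrized by a finite subset of $\PP^1(\FF_\gothp)$'' really describes the characteristic-$0$ (or \'etale) picture --- in characteristic $p$ an ordinary $A$ has exactly two such subgroups over $\overline k$, the multiplicative part and the unique \'etale splitting --- and, like the paper, you should say a word about why the argument for $\pi_2$ is symmetric (the paper treats only $\pi_1$ ``for brevity'').
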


\begin{proof}
It is enough to prove the statement for the analogous morphisms between fine moduli spaces. For a locally noetherian $\calO$-scheme $S$, we say that an $S$-point $((A, \lambda, \alpha, \underline\scrF ),(A', \lambda', \alpha', \underline\scrF' );\phi, \psi)$ of $\calM_\gothc(\gothp)$ is ordinary if and only if $A_1$ (or, equivalently, $A_2$) is an ordinary abelian scheme. 

Recall that we fixed a positive isomorphism $\theta_{\gothc}:\gothc\gothp\simeq\gothc'$, with $\gothc'\in\gothC$. We need to prove that, after restricted to the ordinary locus,  $\pi_1 = \pi_{1, \theta_\gothc}:\calM_\gothc(\gothp)^\ord = \calM_{\gothc, \theta_\gothc}(\gothp)^\mathrm{ord}\rightarrow\calM_\gothc^\mathrm{ord}$ and $\pi_2 = \pi_{2, \theta_\gothc}:\calM_\gothc(\gothp)^\mathrm{ord}\rightarrow\calM_{\gothc'}^\mathrm{ord}$ are finite and flat. 

By \cite[Remark 3.6]{AG} an ordinary abelian variety with RM automatically satisfies the Rapoport condition, and as such it admits exactly one filtration satisfying the Pappas--Rapoport conditions of \S\ref{S:moduli HBAS}. In what follows we can then forget about the filtrations appearing in the tuples classified by $\calM_\gothc(\gothp)^\mathrm{ord},\calM_{\gothc}^\mathrm{ord}$, and $\calM_{\gothc'}^\mathrm{ord}$. The proof of the proposition is now analogous to the one of \cite[V, Lemme 1.12]{deligne-rapoport}. For brevity, we only show the finite flatness of:
\[
\pi_1: \calM_\gothc(\gothp)^\mathrm{ord}\rightarrow\calM_\gothc^\mathrm{ord}.
\]
Since $\pi_1$ is proper, it is enough to show that it is quasi-finite and the rank of the geometric fibers of $\pi_1$ is constant. Then $\pi_1$ is forced to be finite and flat (as $\mathcal{M}_\gothc^{\mathrm{ord}}$ is reduced) by \cite[V, Lemme 1.13]{deligne-rapoport}.

Let $k$ be an $\mathcal{O}$-algebra which is an algebraically closed field of characteristic $p$,
and let $x:\Spec k\rightarrow\mathcal{M}_\gothc^{\mathrm{ord}}$ be a
 $k$-point of $\mathcal{M}_\gothc^\mathrm{ord}$ defining a tuple $(A,\lambda
,\alpha)$. (Recall that we can forget about the filtrations). By Remark~\ref{R:alternative Gamma 0p}, the fiber $T$ of $\pi_{1}$ over $x$ is the $k$-scheme representing the functor which assigns to a locally noetherian $k$-scheme $S$ the set of
isomorphism classes of finite-flat, closed, $\mathcal{O}_{F}$-stable, $\lambda$-isotropic, $S$-subgroup schemes 
$C\subset(A\times_{k}S)[\gothp]$ of rank
$p^f$.

Since $A$ is ordinary, the connected-\'{e}tale exact sequence of
$A[\mathfrak{p}]$ is of the form:
\begin{equation}
0\rightarrow\FF_\gothp\otimes_{
\mathbb{Z}}\mu_{p/k}\rightarrow A[\mathfrak{p}]\rightarrow
\FF_{\mathfrak{p}}\otimes_{
\mathbb{Z}}\left(  \tfrac{\ZZ}{\underline{p\ZZ}}\right)_{k}\rightarrow0,\label{E:conn-et}
\end{equation}
where the morphisms are equivariant for the natural action of $\calO_F/\gothp=\FF_\gothp$.
If $C\subset(A\times_{k}S)[\mathfrak{p}]$ represents an $S$-point of $T$, we can write $S=S^{\prime}\coprod S^{\prime\prime}$ where
$C\times_{S}S^{\prime}$ is equal to the connected part $(A\times_{k}S^{\prime})[\mathfrak{p}]^{\circ}\simeq\FF_{\mathfrak{p}}\otimes_{\ZZ}\mu_{p/S'}$ of $(A\times_{k}S^{\prime})[\mathfrak{p}]$, and $C\times_{S}S''$ is isomorphic to $\FF_\gothp\otimes_{\ZZ}(\underline{\ZZ/p\ZZ})_{S^{\prime\prime}}$. (To see this notice that if $y:\Spec l\rightarrow S$ is a closed point of $S$ for some field extension $l$ of $k$, and if the group of geometric points $C_y(\bar{l})$ of the fiber of $C$ at $y$ is non-trivial, then the existence of an action of $\FF_\gothp$ on $C_y(\bar{l})$ forces $C_y$ to be isomorphic to $\FF_\gothp\otimes_\ZZ (\underline{\ZZ/p\ZZ})_{/l}$. On the other hand, if $C_y$ has trivial \'etale quotient, then it is contained in the connected component of the identity of $(A\times_k l)[\gothp]$; the existence of the $\FF_\gothp$-action then forces this inclusion to be an equality).

The decomposition $S=S'\coprod S''$ induces a corresponding decomposition $T=T'\coprod T''$ where $T'$ is the reduced $k$-scheme that assigns $(A\times
_{k}S)[\mathfrak{p}]^{\circ}$ to any locally noetherian $k$-scheme $S$, while
$T^{\prime\prime}$ represents the functor of $\FF_\gothp$-equivariant splittings of the exact sequence (\ref{E:conn-et}).
Since $T^{\prime\prime}$ is a torsor under the group-scheme
$\operatorname*{Hom}_{\FF_\gothp\otimes_{
\mathbb{Z}}k}(A[\mathfrak{p}]^{\mathrm{\acute{e}t}},A[\mathfrak{p}]^{\circ})\simeq\FF_{\mathfrak{p}}\otimes_{
\mathbb{Z}}\mu_{p/k}$, we see that $T\simeq\Spec k\coprod
(\FF_\gothp\otimes_{
\mathbb{Z}}\mu_{p/k})$ is finite over $k$ of constant rank equal to $1+p^{f}$.

Let us now assume that $k$ is an algebraically closed field of characteristic zero, so that $A[\gothp]\simeq(\FF_\gothp\otimes_{\ZZ}(\underline{\ZZ /p \ZZ})_{k})^2$ is \'etale. By fixing an arbitrary $\FF_\gothp$-stable, closed, and $\lambda$-isotropic $k$-subgroup scheme of $A[\gothp]$ isomorphic to $\FF_\gothp\otimes_{\ZZ}(\underline{\ZZ /p \ZZ})_{k}$ and considering the corresponding exact sequence, one sees via arguments similar to the ones above that there is an isomorphism $T\simeq\Spec k\coprod
(\FF_\gothp\otimes_{\mathbb{Z}}(\underline{\ZZ/p\ZZ})_{k})$, so that the fiber $T$ is still finite over $k$ of rank $1+p^{f}$.

This shows that, in either case,  $\pi_1$ is quasi-finite of constant rank $1+p^f$ and hence concludes the proof this Proposition.
\end{proof}

\begin{remark}
When $g>1$, the morphisms $\pi_{1}$ and $\pi_{2}$ are not finite over the
non-ordinary part of $\Sh$. This phenomenon already occurs when $p$
is unramified in $F$ (\cite{stamm}).
\end{remark}

\subsection{Extension of Hecke correspondence to the toroidal compactification}
\label{S:extension to toroidal compactification}
For each $\gothc\in\mathfrak{C}$ fix sufficiently fine rational admissible polyhedral cone decompositions for the cusps of the Rapoport locus of $\calM_\gothc(\gothp)$. One can then
construct smooth toroidal compactifications $\calM_\gothc(\gothp)^{\tor},\calM(\gothp)^\tor$, and $\Sh(\gothp)^\tor$ of the splitting models with Iwahori level structure, as in \cite[\S2.11]{reduzzi-xiao}.
Here we require that the cone decomposition  we chose at each cusp of the $\calM_\gothc(\gothp)^\tor$ is a smooth refinement of the pull-back via $\pi_1$ and $\pi_2$ of the cone decomposition at the corresponding cusps of $\calM_\gothc$ and $\calM_{\gothc'}$, respectively. This way, $\pi_1$ and $\pi_2$ extend to maps $\pi_1, \pi_2: \Sh(\gothp)^\tor \to \Sh^\tor$.
The restriction of $\pi_1$ to the ordinary locus $\pi_1^{\tor,\ord}: \Sh(\gothp)^{\tor, \ord} \to \Sh^{\tor, \ord}$ may no longer  be finite and flat (due to the refinement of the cone decomposition), but it is still true that 
\begin{equation}
\label{E:pi1 acyclic}
R\pi^{\tor,\ord}_{1,*}\omega^\kappa \cong \pi^{\tor,\ord}_{1,*}\omega^\kappa.
\end{equation}
See the proofs of \cite[Lemma~7.1.1.4]{lan} and \cite[Proposition~7.5]{lan2}.

\subsection{Construction of $T_{\mathfrak{p}}$}\label{S:def of Tp}

We now construct the Hecke operator $T_\gothp$ over
the $\Spec\calO$-scheme $\Sh$, extending a geometric construction of B. Conrad (\cite[\S4.5]{conrad}).

Recall that for each fractional ideal $\gothc\in\mathfrak{C}$ we fixed an isomorphism $\theta_\gothc:\gothc\gothp\simeq\gothc'$ of fractional ideals with positivity such that
$\gothc'\in\mathfrak{C}$. Moreover we denoted by $\pi_1 = \pi_{1, \theta_\gothc}:\calM_{\gothc}(\gothp) = \calM_{\gothc, \theta_\gothc}(\gothp)\rightarrow\calM_\gothc$ and $\pi_2:=\pi_{2,\theta_\gothc}:\calM_\gothc(\gothp)\rightarrow\calM_{\gothc'}$ the ``taking the source'' and ``taking the target'' morphisms at the level of fine moduli spaces. Denote by $f:\calA_\gothc\rightarrow\calM_\gothc$ (resp. $f':\calA_{\gothc'}\rightarrow\calM_{\gothc'}$) the universal abelian scheme over $\calM_\gothc$ (resp. $\calM_{\gothc'}$). Set $\dot\omega:=f_\ast\Omega^1_{\calA_\gothc/\calM_\gothc}$ and $\dot\omega':=f'_{\ast}\Omega^1_{\calA_{\gothc'}/\calM_{\gothc'}}$: these are bundles of rank $g$ over $\calM_\gothc$ and $\calM_{\gothc'}$ respectively.
Similarly, let $\calA(\gothp)$ and $\calA'(\gothp)$ denote the two universal abelian varieties over $\calM_{\gothc}(\gothp)$ and let $f_\gothp: \calA(\gothp) \to \calM_\gothc(\gothp)$ and $f'_\gothp: \calA'(\gothp) \to \calM_\gothc(\gothp)$ denote the natural morphisms.

Define the following morphism of rank-$g$ bundles over $\calM_\gothc(\gothp)$:
\[
\beta':\pi_2^\ast\dot\omega' \cong f'_{\gothp\ast}\Omega^1_{\calA'(\gothp)/\calM_\gothc(\gothp)}\xrightarrow{\phi^*} f_{\gothp\ast}\Omega^1_{\calA(\gothp)/\calM_\gothc(\gothp)} \cong \pi_1^\ast\dot\omega,
\]
where the first and the last isomorphism are induced by base change and the middle arrow is induced by pullback of differentials. There is an analogously defined map at the level of de Rham sheaves:
\[
 \beta'':\pi_2^\ast\calH^1_\dR(\calA_{\gothc'}/\calM_{\gothc'})\rightarrow\pi_1^\ast\calH^1_\dR(\calA_\gothc/\calM_\gothc),
\]
which induces an isomorphism:
$$\pi_2^\ast\wedge^2_{\calO_F\otimes_\ZZ\calO_{\calM_{\gothc'}}}\calH^1_\dR(\calA_{\gothc'}/\calM_{\gothc'})
\overset{\simeq}\lra
\gothp\cdot\pi_1^\ast\wedge^2_{\calO_F\otimes_\ZZ\calO_{\calM_\gothc}}\calH^1_\dR(\calA_\gothc/\calM_\gothc).$$
In particular, the map $\pi_2^\ast\dot\varepsilon'_\tau\to\pi_1^\ast\dot\varepsilon_\tau$ induced by $\beta''$ is an isomorphism if $\tau\notin\Sigma_\gothp$, and it is an isomorphism onto $\tau(\varpi_\gothp)\cdot\pi_1^\ast\dot\varepsilon_\tau$ otherwise.

Fix a paritious weight $\kappa=((k_\tau)_{\tau\in\Sigma},w)$ such that $k_\tau \geq 1$ for all $\tau\in\Sigma$. 
Take a positive integer $N$ sufficiently large so that $w+N\geq k_\tau$ if $\tau\in\Sigma_\gothp$.
Observe that $\beta''$ induces an isomorphism 
$$\beta''_\kappa:\bigotimes_{\tau\in\Sigma}p^N \cdot \pi_2^\ast\dot\varepsilon'^{\otimes(w-k_\tau)/2}_\tau\overset{\simeq}\lra
\pi_{\gothp,\kappa, N}\cdot\bigotimes_{\tau\in\Sigma}\pi_1^\ast\dot\varepsilon_\tau^{\otimes(w-k_\tau)/2}$$ 
where $\pi_{\gothp,\kappa, N}:=p^{N \cdot\# \Sigma}\prod_{\tau\in\Sigma_\gothp}\tau(\varpi_\gothp)^{(w-k_\tau)/2}$.
Twisting the sheaf $\dot\omega$ and $\dot \omega'$ by the character attached to the tuple $(k_\tau)_{\tau\in\Sigma}$, and applying the maps $\beta'$ and $p^{N\cdot \#\Sigma}\pi_{\gothp,\kappa, N}^{-1}\cdot\beta''_\kappa$ one obtains a morphism of invertible sheaves: \begin{equation}
\label{E:beta}    \beta:\pi_2^\ast\dot\omega'^\kappa\rightarrow\pi_1^\ast\dot\omega^\kappa.
\end{equation}
This map does not depend on the auxiliary choice of $N$.  But one should note that this map depends on the choice of the uniformizers $\varpi_\gothp$, except when all $k_\tau$ for $\tau\in \Sigma_\gothp$ are equal, we may take the canonical choice $\pi_{\gothp, \kappa, N}: = p^{N \cdot \#\Sigma} \cdot \NN(\gothp)^{(w-k_\tau)/2}$ to eliminate the ambiguity.
 
Denote by $\mathscr{D}$ the relative dualizing sheaf of the smooth scheme $\calM_\gothc\rightarrow\Spec\calO$. The canonical identification 
$\mathscr{D}=\bigwedge^g_{\calO_{\calM_\gothc}}\Omega_{\calM_\gothc/\Spec\calO}^1$, together with the isomorphisms \eqref{E:KS isomorphisms} give rise to a canonical isomorphism
\[
 KS:\mathscr{D}\overset{\simeq}\longrightarrow\dot\omega^{({\bf 2},0)}.
\]
Denote by $\mathscr{D}_\gothp$ the relative dualizing sheaf of $\calM_\gothc(\gothp)\rightarrow\Spec\calO$: it exists as an invertible sheaf on
$\calM_\gothc(\gothp)$ since the latter is a flat local complete intersection over $\Spec\calO$, by Proposition \ref{P: fflat}. 

We now construct a canonical morphism of sheaves $\xi:\pi_1^\ast\mathscr{D}\rightarrow\mathscr{D}_\gothp$ as follows. First, Proposition \ref{P:LCI} implies that the complement of the $\calO$-smooth locus $\calM_\gothc(\gothp)^\mathrm{sm}$ of $\calM_\gothc(\gothp)$ is of codimension 2 inside $\calM_\gothc(\gothp)$. Since $\calM_\gothc(\gothp)$ is Cohen--Macaulay, it suffices to construct the desired morphism $\xi$ over $\calM_\gothc(\gothp)^\mathrm{sm}$. But over $\calM_\gothc(\gothp)^{\mathrm{sm}}$, the dualizing module $\mathscr{D}_\gothp$  is given by $\bigwedge^g_{\calO_{\calM_\gothc(\gothp)^{\mathrm{sm}}}}\left(\Omega_{\calM_\gothc(\gothp)^{\mathrm{sm}}/\Spec\calO}^1\right)$; so the natural morphism $(\pi_1^{\mathrm{sm}})^\ast\Omega^1_{\calM_\gothc/\Spec\calO}\to\Omega^1_{\calM_\gothc(\gothp)^\mathrm{sm}/\Spec\calO}$ induces the sought-for morphism $\xi^\mathrm{sm}$ between their top exterior powers (here $\pi_1^\mathrm{sm}$ denotes the restriction of $\pi_1$ to $\calM_\gothc(\gothp)^\mathrm{sm}$).\\

We now combine the above maps and denote by $\eta$ the composition of $R\pi_{1\ast}\beta:R\pi_{1\ast}\pi_{2}^{\ast}\dot\omega'^{\kappa}  \to R\pi_{1\ast}\pi_1^\ast\dot\omega^{\kappa}$ with the following morphism in the derived category 
$D_\mathrm{coh}^b(\calM_\gothc)$:
\begin{align*}
R\pi_{1\ast}\pi_1^\ast\dot\omega^{\kappa}
=\dot\omega^{\kappa-({\bf 2},0)}\otimes R\pi_{1\ast}\pi_1^\ast\dot\omega^{({\bf 2},0)}
&\overset{\small{1\otimes KS^{-1}}}\longrightarrow\dot\omega^{\kappa-({\bf 2},0)}\otimes R\pi_{1\ast}\pi_1^\ast\mathscr{D}\\
\overset{1\otimes\xi}\longrightarrow\dot\omega^{\kappa-({\bf 2},0)}\otimes R\pi_{1\ast}\mathscr{D}_\gothp
&\overset{1\otimes\mathrm{tr}_{\pi_1}} \longrightarrow\dot\omega^{\kappa-({\bf 2},0)}\otimes\mathscr{D} \\
\overset{1\otimes KS}\longrightarrow\dot\omega^{\kappa-({\bf 2},0)}\otimes\dot\omega^{({\bf 2},0)}
& =\dot\omega^\kappa,
\end{align*}
where $\mathrm{tr}_{\pi_1}:R\pi_{1\ast}\mathscr{D}_\gothp\rightarrow\mathscr{D}$ denotes the trace morphism of normalized dualizing complexes associated to
the proper dominant morphism $\pi_1:\calM_\gothc(\gothp)\rightarrow\calM_\gothc$. Recall that $\mathrm{tr}_{\pi_1}$ is non-zero by \cite[Proposition 2.13]{BST}, and it is compatible with localizations on the base scheme (\emph{loc.cit.}).

Applying the construction of $\eta:R\pi_{1\ast}\pi_{2}^{\ast}\dot\omega'^{\kappa}\rightarrow\dot\omega^\kappa$ to each component $\calM_\gothc$ of $\calM$ and quotienting by the action of $\calO_F^{\times,+}/(K \cap \calO^\times_{F})^2$ we obtain a well-defined morphism $$\eta:R\pi_{1\ast}\pi_{2}^{\ast}\omega'^{\kappa}\rightarrow\omega^\kappa$$ in $D_\mathrm{coh}^b(\Sh)$. 
We remark that this morphism $\eta$ over $\Sh$ do not depend on the choice of identifications $\theta_\gothc:\gothc\gothp\simeq\gothc'$ for $\gothc\in\mathfrak{C}$.

Ideally, we would like to extend $\eta$ to a morphism over the toroidal compactification and show that this construction does not depend on the choice of the cone decomposition of $\calM_\gothc(\gothp)$. (In particular, we could consider the universal semi-abelian varieties and show that the corresponding map $\beta''_\kappa$ is divisible by the correct power of $\varpi$.) However, we content ourselves by making a remark that one can certainly define $\eta$ over the \emph{generic} fiber of the toroidal compactification (just as how we define tame Hecke operators before), therefore, for some positive integer $M$, $p^M\eta$ extends uniquely to a morphism $p^M\eta: R\pi_{1*}\pi_2^*\omega'^\kappa \to \omega^\kappa$ in $D^b_\mathrm{coh}(\Sh^\tor)$.
(But then Proposition~\ref{P:Tp factor}, or more precisely Proposition~\ref{P:image p^g}, below shows that $\eta$ itself extends uniquely to a morphism $\eta: R\pi_{1*}\pi_2^*\omega'^\kappa \to \omega^\kappa$.)

Now the key technical result is the following.
\begin{proposition}
\label{P:Tp factor}
Assume that the weight $\kappa$ satisfies $\sum_{\tau\in \Sigma_\gothp} k_\tau \geq ef$ and the following conditions:
\begin{eqnarray}
\label{E:ampleness condition} &
k_{\tau_{\gothp, j}^{l+1}} \geq k_{\tau_{\gothp,j}^l} \quad \textrm{for all } j =1, \dots, f, \textrm{ and }l = 1, \dots, e-1; \quad \textrm{and}
\\
\nonumber
&
pk_{\tau_{\gothp, j}^1} \geq k_{\tau_{\gothp,j-1}^e} \quad \textrm{for all } j =1, \dots, f.
\end{eqnarray}
Then the morphism $p^M\eta$ in $D^b_\mathrm{coh}(\Sh^\tor)$ uniquely factors as 
\begin{equation}
\label{E:Tp factor}
R\pi_{1*}\pi_2^*\omega'^\kappa \xrightarrow{\frac1{p^f} \eta}\omega^\kappa \xrightarrow{\cdot p^{f+M}} \omega^\kappa,
\end{equation}
where the second map is the multiplication by $p^{f+M}$.
\end{proposition}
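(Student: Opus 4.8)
The plan is to follow the strategy sketched in the introduction: verify the divisibility by $p^f$ at the level of coherent sheaves over the dense ordinary locus, and then upgrade this to a factorization in $D^b_{\mathrm{coh}}(\Sh)$ by controlling the higher direct images $R^j\pi_{1*}\pi_2^*\omega^\kappa$ via Proposition~\ref{P:ample=>small support}. First I would reduce to a single component $\calM_\gothc$ of $\calM$: since $\Sh=\calM/(\calO_F^{\times,+}/(\calO_{F,\mathcal{N}}^\times)^2)$ is an \'etale quotient and $\eta$ is equivariant, and since the factorization will turn out to be unique (hence automatically equivariant), it suffices to factor the $\calM_\gothc$-level morphism $\eta\colon C:=R\pi_{1*}\pi_2^*\dot\omega^\kappa\to\dot\omega^\kappa$ in $D^b_{\mathrm{coh}}(\calM_\gothc)$, where $\pi_1\colon\calM_\gothc(\gothp)\to\calM_\gothc$ and $\pi_2\colon\calM_\gothc(\gothp)\to\calM_{\gothc'}$. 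Here $C$ is concentrated in non-negative degrees with $\calH^0(C)=\pi_{1*}\pi_2^*\dot\omega^\kappa$, and $\calH^j(C)=R^j\pi_{1*}\pi_2^*\dot\omega^\kappa$ for $j>0$ vanishes over $\calM_{\gothc,\QQ_p}$ and over the ordinary locus $\calM^{\mathrm{ord}}_{\gothc,\FF}$, since $\pi_1$ is finite there (Proposition~\ref{P: fflat}); so $\mathrm{supp}\,\calH^j(C)$ is contained in the Goren--Oort divisor $\bigcup_\tau\dot X_\tau\subset\calM_{\gothc,\FF}$.

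The key homological step is to show that restriction along the truncation map $\calH^0(C)\to C$ induces an isomorphism
\[
\Hom_{D^b_{\mathrm{coh}}(\calM_\gothc)}(C,\dot\omega^\kappa)\ \xrightarrow{\ \sim\ }\ \Hom_{\calO_{\calM_\gothc}}(\pi_{1*}\pi_2^*\dot\omega^\kappa,\dot\omega^\kappa).
\]
For this I would use that $\calM_\gothc$ is regular and $\dot\omega^\kappa$ is a line bundle, so that the local $\mathrm{Ext}$-sheaves $\mathcal{E}xt^i_{\calM_\gothc}(\calG,\dot\omega^\kappa)$ vanish for $i<\mathrm{codim}\,\mathrm{supp}\,\calG$ (Ischebeck). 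Proposition~\ref{P:ample=>small support} --- combined with Tor-independent base change along $\calM_{\gothc,\FF}\hookrightarrow\calM_\gothc$, valid because $\calM_\gothc(\gothp)$ is $\calO$-flat --- gives that $\mathrm{supp}\,\calH^j(C)$ has codimension $\ge j+1$ in $\calM_{\gothc,\FF}$, hence codimension $\ge j+2$ in $\calM_\gothc$, for all $j\ge1$; therefore $\mathcal{E}xt^i_{\calM_\gothc}(\calH^j(C),\dot\omega^\kappa)=0$ for $i\le j+1$, and a d\'evissage of $\tau_{\ge1}C$ along its cohomology sheaves yields $\mathrm{Ext}^n_{D^b_{\mathrm{coh}}(\calM_\gothc)}(\tau_{\ge1}C,\dot\omega^\kappa)=0$ for $n\le1$, which via the truncation triangle produces the displayed isomorphism. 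Since $\Hom_{\calO_{\calM_\gothc}}(-,\dot\omega^\kappa)$ is $\calO$-torsion-free (as $\dot\omega^\kappa$ is $\calO$-flat), $p^f$ acts injectively on both sides; so the proposition becomes equivalent to the assertion that $\eta^0:=\eta|_{\calH^0(C)}\colon\pi_{1*}\pi_2^*\dot\omega^\kappa\to\dot\omega^\kappa$ is divisible by $p^f$ as a morphism of coherent sheaves on $\calM_\gothc$, the factorization $\tfrac1{p^f}\eta$ and its uniqueness then following formally.

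To prove this sheaf-level divisibility I would examine the reduction $\bar\eta^0\colon\pi_{1*}\pi_2^*\dot\omega^\kappa\to\dot\omega^\kappa/p^f\dot\omega^\kappa$, i.e.\ a global section of $\calHom(\pi_{1*}\pi_2^*\dot\omega^\kappa,\dot\omega^\kappa/p^f)$. It vanishes over $\calM_{\gothc,\QQ_p}$ (where $\dot\omega^\kappa/p^f$ vanishes, $p^f$ being invertible) and over $\calM^{\mathrm{ord}}_{\gothc,\FF}$ by Proposition~\ref{P:image p^g} (the Conrad-type ordinary-locus computation, which uses $\sum_{\tau\in\Sigma_\gothp}k_\tau\ge ef$), hence it is supported on $\bigcup_\tau\dot X_\tau$, which is pure of codimension $1$ in $\calM_{\gothc,\FF}$. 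Now $\calM_\gothc\otimes_\calO\calO/p^f$ is Cohen--Macaulay and equidimensional of dimension $g$ (being flat over the Artinian ring $\calO/p^f$ with smooth special fibre), so the line bundle $\dot\omega^\kappa/p^f$ on it has no associated prime contained in the codimension-$1$ subset $\bigcup_\tau\dot X_\tau$; the same then holds for $\calHom(\pi_{1*}\pi_2^*\dot\omega^\kappa,\dot\omega^\kappa/p^f)$, which is locally a subsheaf of a finite direct sum of copies of $\dot\omega^\kappa/p^f$. A coherent sheaf with no associated prime in $\bigcup_\tau\dot X_\tau$ has no nonzero section supported there, so $\bar\eta^0=0$, i.e.\ $\eta^0$ is divisible by $p^f$. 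Reassembling the components of $\calM$ and descending to $\Sh$ finishes the proof.

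I expect the real obstacle --- and the only place where the weight hypotheses genuinely enter --- to be exactly this passage from sheaves to the derived category. As emphasized after Pilloni's observation, the factorization over the dense ordinary locus does \emph{not} by itself force a factorization of $\eta$ in $D^b_{\mathrm{coh}}(\Sh)$; one must know that the higher direct images $R^j\pi_{1*}\pi_2^*\omega^\kappa$ are supported in sufficiently small codimension, which is precisely Proposition~\ref{P:ample=>small support}, whose proof rests on the $\PP^1$-bundle description of the fibres of $\pi_{1,\FF}$ together with the vanishing condition coming from \eqref{E:ampleness condition}. The remaining ingredients --- the $\mathrm{Ext}$-vanishing d\'evissage, torsion-freeness of $\Hom$, and the associated-prime argument for $\dot\omega^\kappa/p^f$ --- are routine once that support estimate is available.
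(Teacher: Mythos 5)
Your proposal is correct and follows essentially the same route as the paper: the ordinary-locus divisibility (Proposition~\ref{P:image p^g}), the support estimate for $R^{>0}\pi_{1*}\pi_2^*\dot\omega^\kappa_\FF$ (Proposition~\ref{P:ample=>small support}), and a truncation d\'evissage using regularity of $\calM$ to kill the relevant $\calExt$ groups are exactly the three ingredients the paper combines in Proposition~\ref{P:abstract hom}. The only (cosmetic) difference is that you establish an isomorphism $\Hom_{D^b}(C,\dot\omega^\kappa)\cong\Hom(\calH^0(C),\dot\omega^\kappa)$ and then divide $\eta^0$ by $p^f$ at the sheaf level via the associated-primes argument, whereas the paper works with an injection of $\calHom^i$ into $\dot\omega^\kappa/p^f$ for $i=-1,0$ and the long exact sequence of $0\to\dot\omega^\kappa\xrightarrow{p^f}\dot\omega^\kappa\to\dot\omega^\kappa/p^f\to0$; the two packagings are equivalent.
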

Note that the condition~\eqref{E:ampleness condition} automatically forces all $k_\tau \geq 0$ for $\tau \in \Sigma_\gothp$.
We also point out that the condition~\eqref{E:ampleness condition} is similar to the conjectural ampleness condition in \cite[Theorem~1.9]{tian-xiao}, at least when $\gothp$ is unramified over $p$.

The proof of Proposition~\ref{P:Tp factor} will be given later in \S\ref{S:proof of Tp factor}.\footnote{After correcting the gap pointed out by Pilloni, we were informed by him that he has a different and simpler proof of Proposition~\ref{P:Tp factor}. But our proof reveals some interesting finer geometry of the map $\pi_1: \calM(\gothp) \to \calM$.}
We assume Proposition~\ref{P:Tp factor} temporarily to complete our construction of the operator $T_\gothp^\rmn$ on cohomology. 
The first map in \eqref{E:Tp factor} gives a canonically defined morphism $\frac{1}{p^f}\eta:R\pi_{1\ast}\pi_{2}^{\ast}\omega'^{\kappa}\rightarrow\omega^{\kappa}$. For $m\in\ZZ_{> 0}\cup\{\infty\}$ we denote by $\omega^\kappa_m$ the sheaf $\omega^\kappa/(\varpi^m)$, with the convention that $\varpi^\infty:=0$. The morphism $\frac{1}{p^f}\eta$ induces a morphism
$$\tilde\eta_m:R\pi_{1\ast}\pi_{2}^{\ast}(\omega'^{\kappa}_m)\rightarrow\omega^{\kappa}_m.$$

\begin{definition} 
We define the action of the \emph{Hecke operator} $T_\gothp^\rmn$ on the cohomology of $\omega^\kappa_m$ as the following composition:
$$T_\gothp^\rmn:H^i(\Sh^\tor,\omega^\kappa_m)  \overset{\pi_2^\ast}\lra H^i(\Sh(\gothp)^\tor,\pi_2^\ast\omega^\kappa_m)\overset{\pi_{1\ast}}\lra H^i(\Sh^\tor,R\pi_{1\ast}\pi_2^\ast(\omega^\kappa_m))\overset{\tilde\eta_m}\lra H^i(\Sh^\tor,\omega^\kappa_m).$$
\end{definition}

\begin{remark}
Let $R$ be either a finite extension of $\QQ_p$ or the ring of integers of a finite extension of $\QQ_\ell$ where $K_\gothq = \GL_2(\calO_{F_\gothq})$ for all primes $\gothq$ above $\ell$. The moduli schemes $\calM_R$ and
$\calM(\gothp)_R$ defined in the obvious way over $\Spec R$ are both smooth, and the natural maps $\pi_{1,R},\pi_{2,R}:\Sh(\gothp)_R\rightarrow\Sh_R$ are finite and flat. In particular, one can define the ``usual'' Hecke operator $T_{\gothp,R}^\rmn$ acting on the cohomology of $\Sh_R$ by means of the finite-flat trace map attached to $\pi_{1,R}$. The compatibility between the finite-flat trace map and the dualizing trace map implies that the operator $T_\gothp^\rmn$ defined above coincides with the classical Hecke operator $T_{\gothp,R}^\rmn$ in these settings. 
\end{remark}

\begin{remark}\label{R:q-exp}
The $q$-expansion of $T_\gothp^\rmn f$ for $f\in H^0(\Sh_m^\tor,\omega^\kappa_m)$ is ``as expected'', namely it is the expression in the second line of \eqref{E:q-expansion Tp}
divided by the normalizing factor $p^f\prod_\tau \tau(\varpi_\gothp)^{(w-k_\tau)/2}$. In particular, for weight $({\bf n}, n-2)$ and $p$ is inert (with $\varpi_\gothp = p$), this normalizing factor disappears.

In particular, if $p$ is inert in $F$ and we take $\varpi_\gothp = p$ and $\vartheta_\gothc = p$, the formula \eqref{E:q-expansion Tp} (at the cusps defined therein) simplifies to
$$a_\xi(T_p^\rmn( f),\calC, \Tate_{\frak{a},\frak{b}}) = a_{p\xi}(f,\calC',\Tate_{\frak{a},\frak{b}})+ \prod_{\tau\in\Sigma} p^{k_\tau -1} a_{p^{-1}\xi}(f,\calC''_{p^{-1}},\Tate_{\frak{a},\frak{b}}).$$
Notice that the formula is meaningful in $\calO/(\varpi^m)$ since $k_\tau\geq 1$. 

Finally, we remind the reader that we have introduced a normalization by the factor $\pi_{\gothp,\kappa, N}$ in the definition of our Hecke operators (see \eqref{E:beta}). In particular, since $\omega^{({\bf 0}, 2)}$ is a canonically trivial vector bundle on $\Sh^\tor$, 
the natural isomorphism
\[
H^*(\Sh^\tor, \omega_m^{((k_\tau), w)}) \xrightarrow{\simeq}
H^*(\Sh^\tor, \omega_m^{((k_\tau), w+2w_0)})
\]
for any integer $w_0$ is equivariant for the action of $T_\gothp^\rmn$.

\end{remark}

\begin{remark}\label{R:compatibility}
 The morphism $KS\circ(\mathrm{tr}_{\pi_1}\circ\xi)\circ KS^{-1}:R\pi_{1\ast}\pi_1^\ast\dot\omega^{({\bf 2},0)}\rightarrow\dot\omega^{({\bf 2},0)}$ that appears in the composition defining $\eta$ coincides, over an open subscheme of $\calM_\gothc$ on which the map $\pi_1$ is finite flat, with the usual finite flat trace map $\pi_{1\ast}\pi_1^\ast\dot\omega^{({\bf 2},0)}\rightarrow\dot\omega^{({\bf 2},0)}$. This follows from the compatibility between the dualizing trace map and the finite flat trace map (cf. \cite{conrad_book}). In particular, when $g=1$ (so that $\pi_1$ is finite flat over the entire space $\calM=\calM_\gothc$), our construction \emph{coincides} with the construction given in \cite[\S4.5]{conrad}.
\end{remark}

\subsection{Outline of the proof of Proposition~\ref{P:Tp factor}}
\label{S:proof of Tp factor}
Since the morphism $\eta$ is obtained by taking invariants under $\calO_F^{\times,+}/(K \cap\calO_{F}^{\times})^2$ of the (homonymous) morphism $R\pi_{1\ast}\pi_2^\ast\dot\omega^\kappa\to\dot\omega'^\kappa$ on $\calM^\tor$, it suffices to prove the result for the latter morphism, as a morphism in $D^b_\mathrm{coh}(\calM^\tor)$.
This will follow from the three propositions below.

\begin{proposition}\label{P:image p^g} 
Suppose that $\sum_{\tau\in \Sigma_\gothp} k_\tau \geq e f$. Let $M$ be as in end of \S \ref{S:def of Tp}.
Restricting $p^M\eta$ to $\calM^{\tor, \ord}$ (noting that $\pi_1$ has no higher derived pushforward over $\calM^{\tor,\ord}$), the homomorphism $p^M\eta:\big(\pi_{1\ast}\pi_{2}^{\ast}\dot\omega'^{\kappa}\big)\big|_{\calM^{\tor, \ord}}\rightarrow\dot\omega^{\kappa}|_{\calM^{\tor, \ord}}$ of coherent sheaves factors uniquely as
\[
\big(\pi_{1\ast}\pi_{2}^{\ast}\dot\omega'^{\kappa}\big)\big|_{\calM^{\tor, \ord}} \to \dot\omega^{\kappa}|_{\calM^{\tor, \ord}} \xrightarrow{\ \cdot p^{f+M}\ } \dot\omega^{\kappa}|_{\calM^{\tor, \ord}}.
\]
\end{proposition}

\begin{proposition}
\label{P:abstract hom}
Suppose that the support of $R^t\pi_{1,*}\pi_2^*\dot \omega'^\kappa_{\FF}$ has codimension at least $t+1$ in the special fiber $\calM_{\FF}$ for all $t \geq 1$.
Then $\eta$ factors uniquely as
\begin{equation}
\label{E:sigma factor}
R\pi_{1*}\pi_2^*\dot\omega'^\kappa \to \dot\omega^\kappa \xrightarrow{\ \cdot p^f\ } \dot\omega^\kappa
\end{equation}
in $D^b_\mathrm{coh}(\calM)$
if the restriction of $\eta$ does on the ordinary locus (as shown in Proposition~\ref{P:image p^g}).
\end{proposition}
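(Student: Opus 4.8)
The plan is to reduce the asserted factorization to the vanishing of a single $\Hom$-group in $D^b_\mathrm{coh}(\calM)$, and then to compute that group by a truncation argument in which the codimension hypothesis enters precisely to make the relevant pieces vanish. Write $C := R\pi_{1\ast}\pi_2^\ast\dot\omega^\kappa$ and $L := \dot\omega^\kappa$. Since $\calM$ is flat over $\calO$, multiplication by $p^f$ is injective on $L$, so there is a distinguished triangle $L \xrightarrow{\cdot p^f} L \to L/p^fL \xrightarrow{+1}$; applying $\Hom_{D^b_\mathrm{coh}(\calM)}(C, -)$ shows that $\eta$ lies in $p^f \cdot \Hom_{D^b_\mathrm{coh}(\calM)}(C, L)$ if and only if its image $\bar\eta$ in $\Hom_{D^b_\mathrm{coh}(\calM)}(C, L/p^fL)$ is zero, and that the resulting factorization is unique as soon as $\Hom_{D^b_\mathrm{coh}(\calM)}(C, L)$ has no $p^f$-torsion.

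To prove $\bar\eta = 0$ I would localize along the non-ordinary locus. Let $j\colon \calM^\ord \hookrightarrow \calM$ be the inclusion of the ordinary locus and set $Z := \calM \setminus \calM^\ord$; by Proposition~\ref{T:smoothness of ramified GO strata}, $Z$ is a closed subset of $\calM_\FF$, equal there to the divisor $\bigcup_\tau \dot X_\tau$, so $\mathrm{codim}_{\calM_\FF} Z \ge 1$. From the triangle $R\underline{\Gamma}_Z(L/p^fL) \to L/p^fL \to Rj_\ast j^\ast(L/p^fL) \xrightarrow{+1}$ together with the adjunction identification $\Hom_{D^b_\mathrm{coh}(\calM)}(C, Rj_\ast j^\ast(L/p^fL)) \cong \Hom_{D^b_\mathrm{coh}(\calM^\ord)}(C|_{\calM^\ord}, (L/p^fL)|_{\calM^\ord})$, one sees that the image of $\bar\eta$ in the right-hand group is the reduction modulo $p^f$ of $\eta|_{\calM^\ord}$. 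Now $\pi_1$ is finite over $\calM^\ord$, so $C|_{\calM^\ord}$ is concentrated in degree $0$ and $\eta|_{\calM^\ord}$ is an honest map of sheaves; Proposition~\ref{P:image p^g} says exactly that it factors through $\cdot p^f$, hence that this reduction vanishes. Therefore $\bar\eta$ lifts to a class in $\Hom_{D^b_\mathrm{coh}(\calM)}(C, R\underline{\Gamma}_Z(L/p^fL))$, and it suffices to show this group is zero.

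The computation of $\Hom_{D^b_\mathrm{coh}(\calM)}(C, R\underline{\Gamma}_Z(L/p^fL))$ would proceed by induction along the canonical truncations $\tau_{\le t}C$. Since $\pi_1$ is proper, $\mathcal{H}^t(C) = R^t\pi_{1\ast}\pi_2^\ast\dot\omega^\kappa$ vanishes for $t < 0$; for $t \ge 1$ it is $p$-power torsion, and by Nakayama together with proper base change its support is a closed subset $W_t \subseteq \calM_\FF$ contained in $\mathrm{supp}\bigl(R^t\pi_{1,\FF\ast}\pi_2^\ast\dot\omega^\kappa_\FF\bigr)$, hence of codimension $\ge t+1$ in $\calM_\FF$ by hypothesis. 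Because $\calM = \calM^\PR$ is smooth over $\calO$ (cf.\ Section~\ref{Sec:section 2}), the reduction $\calM_{\calO/p^f}$ is Cohen--Macaulay of pure dimension $g$ and $L/p^fL$ is an invertible sheaf on it; consequently, for any closed $W \subseteq \calM_\FF$, the complex $R\underline{\Gamma}_W(L/p^fL)$ is concentrated in cohomological degrees $\ge \mathrm{codim}_{\calM_{\calO/p^f}} W$. Using the triangles $\tau_{\le t-1}C \to \tau_{\le t}C \to \mathcal{H}^t(C)[-t] \xrightarrow{+1}$, the vanishing reduces to $\Hom_{D^b_\mathrm{coh}(\calM)}\bigl(\mathcal{H}^t(C)[-t], R\underline{\Gamma}_Z(L/p^fL)\bigr) = 0$ for each $t \ge 0$: for $t = 0$ the source sits in degree $0$ and the target in degrees $\ge 1$; for $t \ge 1$, since $\mathcal{H}^t(C)$ is supported on $W_t \subseteq Z$ one may replace $R\underline{\Gamma}_Z$ by $R\underline{\Gamma}_{W_t}$, after which the target sits in degrees $\ge t+1$ and the source in degree $t$, so the $\Hom$ vanishes. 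This yields $\bar\eta = 0$. Running the same induction one cohomological degree lower gives $\Hom_{D^b_\mathrm{coh}(\calM)}(C, L/p^fL[-1]) = 0$ --- here the sharp bound $\mathrm{codim}\, W_t \ge t+1$ is what kills $\mathrm{Ext}^{\,t-1}_{\calM}(\mathcal{H}^t(C), L/p^fL)$ --- so $\Hom_{D^b_\mathrm{coh}(\calM)}(C, L)$ is $p^f$-torsion free and the factorization is unique.

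The step I expect to be the main obstacle is the depth input, i.e.\ passing from the codimension hypothesis to the statement that $R\underline{\Gamma}_W(L/p^fL)$ lives in degrees $\ge \mathrm{codim}\, W$: this relies on $\calM_{\calO/p^f}$ being Cohen--Macaulay and $L/p^fL$ invertible on it, and so uses the smoothness of the Pappas--Rapoport model over $\calO$ in an essential way. One must also handle carefully the two compatibilities invoked implicitly above: that the construction of $\eta$ commutes with restriction to $\calM^\ord$ (so that Proposition~\ref{P:image p^g} applies), and that proper base change identifies $\mathcal{H}^t(C)$ with a subsheaf of $R^t\pi_{1,\FF\ast}\pi_2^\ast\dot\omega^\kappa_\FF$ on the special fibre, the discrepancy coming only from the $p$-torsion of $\mathcal{H}^{t+1}(C)$.
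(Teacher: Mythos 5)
Your proposal is correct, and its skeleton --- reduce the factorization to the vanishing of the obstruction class in $\Hom(C,L/p^fL)$ and of $\Hom^{-1}(C,L/p^fL)$, then run an induction over the canonical truncations $\tau_{\le t}C$ in which the codimension hypothesis kills the contribution of each $\mathcal{H}^t(C)$ for $t\ge 1$ --- is the same as the paper's. The differences are in packaging. Where you split $L/p^fL$ by the local-cohomology triangle along $Z=\calM\setminus\calM^{\ord}$, dispose of the $Rj_*j^*$ piece by Proposition~\ref{P:image p^g}, and kill the $R\underline{\Gamma}_Z$ piece via the depth bound $R\underline{\Gamma}_{W_t}(L/p^fL)\in D^{\ge t+1}$ on the Cohen--Macaulay scheme $\calM_{\calO/p^f}$, the paper instead proves that restriction along $\tau_{\le 0}C=\pi_{1*}\pi_2^*\dot\omega^\kappa\to C$ induces an \emph{injection} $\calHom^i_{D^b(\calM)}(C,\dot\omega^\kappa/p^f)\hookrightarrow \calHom^i_{\calO_\calM}(\pi_{1*}\pi_2^*\dot\omega^\kappa,\dot\omega^\kappa/p^f)$ for $i=-1,0$; the case $i=-1$ then vanishes by left-exactness of sheaf $\calHom$, and the case $i=0$ is checked on the dense ordinary locus exactly as you do. The paper's version of the key vanishing is $\calE xt^{\le t+1}_{\calO_\calM}(\mathcal{H}^t(C),\dot\omega^\kappa)=0$, deduced from regularity of $\calM$ and the fact that $\mathrm{supp}\,\mathcal{H}^t(C)$ has codimension $>t+1$ in the integral model $\calM$ (after the same Nakayama step you perform to transfer the hypothesis from the special fibre); this is the dual form of your depth statement and buys exactly the same vanishing. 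Your route has the small advantage of working directly with global $\Hom$'s, so no local-to-global gluing is needed, while the paper's stays within coherent sheaves (your $Rj_*j^*(L/p^fL)$ is only quasi-coherent, a harmless but real enlargement of the category). Both arguments correctly isolate the essential geometric inputs: the smoothness of $\calM^{\PR}$ over $\calO$ and the codimension estimate of Proposition~\ref{P:ample=>small support}.
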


\begin{proposition}
\label{P:ample=>small support}
Suppose that the weight $\kappa$ satisfies the condition~\eqref{E:ampleness condition}.
Then the assumption in Proposition~\ref{P:abstract hom} holds.
\end{proposition}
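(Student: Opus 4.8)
The plan is to descend to the fibres of $\pi_1$ over the Goren--Oort strata of $\calM_\FF$ and combine a K\"unneth/Mayer--Vietoris computation with the positivity encoded in \eqref{E:ampleness condition}. \emph{Step 1 (reduction to fibres).} The strata $\dot X_\ttT^\circ$ partition $\calM_\FF$, with $\mathrm{codim}_{\calM_\FF}\dot X_\ttT^\circ=\#\ttT$ by Proposition~\ref{T:smoothness of ramified GO strata}, and by the local model of Proposition~\ref{P:LCI} the map $\pi_1$ is finite over the complement of $\bigcup_{\tau\in\Sigma_\gothp}\dot X_\tau$. Writing $\ttT_0:=\ttT\cap\Sigma_\gothp$, the theorem on formal functions (using that the fibres of $\pi_1$ are reduced, which is part of Step 2) reduces the assertion to the following: for every $\ttT$ with $\ttT_0\neq\emptyset$ and every geometric point $x\in\dot X_\ttT^\circ$, one has $H^j\!\big(\pi_1^{-1}(x),\,\pi_2^*\dot\omega^\kappa|_{\pi_1^{-1}(x)}\big)=0$ for all $j\geq\#\ttT_0$. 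Indeed this forces $\mathrm{Supp}\,R^t\pi_{1,*}\pi_2^*\dot\omega^\kappa_\FF\subseteq\bigcup_{\#(\ttT\cap\Sigma_\gothp)\geq t+1}\dot X_\ttT$, which has codimension $\geq t+1$.

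\emph{Step 2 (geometry of the fibres).} Using the explicit local model of Proposition~\ref{P:LCI} together with a global analysis of $\pi_1:\calM(\gothp)_\FF\to\calM_\FF$ in the style of \cite{goren-kassaei,helm,tian-xiao} (compatibly with the strata $Y_{\ttS,\ttS'}$ of Corollary~\ref{C:strata iwahori}), one shows that for $x\in\dot X_\ttT^\circ$ the fibre $\pi_1^{-1}(x)$ is a reduced, connected, projective scheme which is a union of products of $\PP^1$'s, with the following properties: (i) $\dim\pi_1^{-1}(x)\leq\#\ttT_0$, since $\pi_1$ is an isomorphism on the local-model factors attached to primes $\neq\gothp$, so the fibre depends only on $\ttT_0$; (ii) at most $\#\ttT_0$ of the irreducible components of $\pi_1^{-1}(x)$ meet at any point, so the nerve of the cover by components has dimension $\leq\#\ttT_0-1$; and (iii) whenever $\kappa$ satisfies \eqref{E:ampleness condition}, the restriction of $\pi_2^*\dot\omega^\kappa$ to each $\PP^1$-factor occurring in $\pi_1^{-1}(x)$ is $\calO(n)$ with $n\geq-1$. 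For (iii): since the $\dot\varepsilon_\tau$ are geometrically trivial, $n$ is an explicit $\ZZ$-linear form in $(k_\tau)_{\tau\in\Sigma_\gothp}$ determined by the action of $\alpha'$ (that is, of $\phi^*$) on the universal filtration along the relevant ``chain'' in $\Sigma_\gothp$, the bottom step $\tau_{\gothp,j}^1$ of a chain contributing a $p$-fold twist coming from the Verschiebung in $\mathrm{Hasse}_{\varpi_\gothp,j}$; the inequalities $k_{\tau_{\gothp,j}^{l+1}}\geq k_{\tau_{\gothp,j}^l}$ and $pk_{\tau_{\gothp,j}^1}\geq k_{\tau_{\gothp,j-1}^e}$ of \eqref{E:ampleness condition} are precisely what force every such $n$ to be $\geq-1$.

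\emph{Step 3 (cohomology vanishing).} Grant Step 2 and set $\calF:=\pi_2^*\dot\omega^\kappa|_{\pi_1^{-1}(x)}$. By the K\"unneth formula, $H^j(W,\calF)=0$ for $j>0$ whenever $W$ is a product of $\PP^1$'s on which $\calF$ is $\calO(\geq-1)$ on each factor; this covers every irreducible component of $\pi_1^{-1}(x)$ and every intersection of such components (itself a product of $\PP^1$'s). Hence in the \v{C}ech-to-derived-functor spectral sequence $E_1^{p,q}=\bigoplus_{\#S=p+1}H^q(Z_S,\calF)\Rightarrow H^{p+q}(\pi_1^{-1}(x),\calF)$ attached to the cover of $\pi_1^{-1}(x)$ by its irreducible components $\{Z_i\}$ (with $Z_S:=\bigcap_{i\in S}Z_i$) one has $E_1^{p,q}=0$ for $q>0$, so $H^n(\pi_1^{-1}(x),\calF)$ equals the $n$-th cohomology of the \v{C}ech complex $p\mapsto\bigoplus_{\#S=p+1}H^0(Z_S,\calF)$ of the nerve; by (ii) this nerve has dimension $\leq\#\ttT_0-1$, so that cohomology vanishes in degrees $\geq\#\ttT_0$. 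This is exactly the vanishing required in Step 1.

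\emph{The main difficulty.} The substance of the argument lies in Step 2: establishing the description of the fibres of $\pi_1$ as unions of products of $\PP^1$-bundles with the nerve bound (ii), and — above all — computing the degree of $\pi_2^*\dot\omega^\kappa$ along each $\PP^1$-factor and checking that \eqref{E:ampleness condition} keeps it $\geq-1$. This requires extending the Goren--Oort / Helm--Tian--Xiao analysis of the Iwahori-level map to the ramified Pappas--Rapoport model, which is why the statement is singled out for Section~\ref{Sec:section 4}. Note finally that the weaker bound $\mathrm{codim}\geq t$ would already follow from the fibre-dimension estimate (i) alone; extracting the extra $+1$ genuinely uses the positivity in (iii), not just the size of the fibres.
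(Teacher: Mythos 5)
Your reduction to the fibres of $\pi_1$ over the Goren--Oort strata, and the description of those fibres as unions of products of $\PP^1$'s with degrees controlled by \eqref{E:ampleness condition}, match the paper's strategy, and you correctly identify Step 2 as the heart of the matter. But claim (ii) of Step 2 is false, and Step 3 collapses with it. Over (the geometric generic point of) a sparse stratum $X_\ttT^\circ$ with $\#\ttT=t$, the $t$-dimensional components of the fibre are the $Y_{\ttS_{\underline\lambda},\ttS^c_{\underline\lambda}}$ indexed by tuples $\underline\lambda\in\prod_i\{1,\dots,s_i\}$, and by Corollary~\ref{C:strata iwahori}(3) any two with $|\lambda_i-\lambda'_i|\le 1$ for all $i$ have nonempty intersection: already for $t=2$ the four components around a unit square of the grid share a common point (this is precisely the ``opposite vertices of a square intersect in codimension $2$'' phenomenon in the diagram of \ref{S:proof of main prop}). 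So the nerve of the cover by irreducible components can have dimension up to $2^{t}-1$ rather than $t-1$, the \v{C}ech complex of $H^0$'s has length exceeding $t$, and your spectral sequence does not give the required vanishing of $H^{t}$. Symptomatically, the positivity you flag as the crux never actually enters your argument: the \emph{untwisted} restrictions $\calF|_{Z_S}$ have degree $\ge 0$ on every $\PP^1$-factor under \eqref{E:ampleness condition}, so the ``$\ge -1$'' slack is never used.

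The paper organizes the computation differently: instead of a Mayer--Vietoris resolution, it filters $\pi_2^*\dot\omega^\kappa|_{Z_\ttT}$ by subsheaves each supported on a \emph{single} component, whose successive subquotients are $\dot\omega^\kappa|_{Y_{\ttS_{\underline\lambda}}}$ twisted by the ideal sheaves of the intersections with the adjacent components (Corollary~\ref{C:property of pi1}(4)). These twists lower the degree on the relevant $\PP^1$-factors by $1$, to $k_{a_i-\lambda_i}-k_{a_i-\lambda_i-1}-1$ or $pk_{a_i-\lambda_i}-k_{a_i-\lambda_i-1}-1$, and condition~\eqref{E:ampleness condition} is exactly calibrated to keep these $\ge -1$; then $R^{>0}\pi_{1,*}$ of every subquotient vanishes and $R^{t}\pi_{1,*}$ of the total sheaf vanishes by right-exactness in top degree. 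If you want to salvage the Mayer--Vietoris route you must prove, not assert, the acyclicity of the resulting \v{C}ech complex in degrees $\ge t$ for the actual grid configuration. Finally, the fibres are genuinely non-reduced (the maps $Y_{\ttS,\ttS^c}\to Y'_{\ttS,\ttS^c}$ are Frobenius factors), so passing from cohomology of the reduced fibre to the stalk of $R^{t}\pi_{1,*}$ via formal functions requires the separate Nakayama-type argument given at the end of \ref{S:proof of main prop}; your parenthetical appeal to reducedness is not available.
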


Proposition~\ref{P:abstract hom} is a formal homological algebra result.
Proposition~\ref{P:image p^g} is a straightforward generalization of  \cite[Theorem 4.5.1]{conrad}.
The proof of these two propositions will be given shortly in \S\ref{S:proof of image p^g} and \S\ref{S:proof of abstract hom}, after we summarize the basic idea of the proof of Proposition~\ref{P:ample=>small support} (whose proof will be given in Section~\ref{Sec:section 4}).

By Grothendieck's formal function theorem, the cohomological dimension of $R\pi_{1,*}\pi_2^*\dot \omega'^ \kappa_{\FF}$ is bounded by the dimension of the fiber of the map $\pi_1:\calM(\gothp)_{\FF} \to \calM_{ \FF}$.
Since the source and the target of $\pi_1$ have the same dimension, localizing at a codimension $t$ point $x$ of $\calM_{\FF}$, $R^{>t}\pi_{1,*}\pi_2^*\dot \omega'^\kappa_{\FF}$ is automatically trivial, and the contribution to the localization of $R^t\pi_{1,*}\pi_2^*\dot \omega'^\kappa_{\FF}$ at $x$ only comes from the irreducible components of $\calM(\gothp)_{\FF}$ of (maximal) dimension $g$ (see Lemma~\ref{C:strata iwahori} for the description of these strata).
Recall that $\calM_{\FF}$ admits the Goren--Oort stratification (cf. \S\ref{S:Goren-Oort stratification}). The proof of Proposition~\ref{P:ample=>small support} consists of the following ingredients.
\begin{itemize}
\item
The image of each irreducible component of $\calM(\gothp)_{\FF}$ is a closed Goren--Oort stratum (Proposition~\ref{P:geometry of HMV(p)} below); 
so it suffices to look at those open Goren--Oort stratum $X_\ttT^\circ$ whose closure is the image of some irreducible component of $\calM(\gothp)_{\FF}$;
\item
over the \emph{geometric} generic point of the  open stratum $X_\ttT^\circ$, 
only the dimension $t = \#\ttT$ fibers are relevant (by Proposition~\ref{P:formal function} below), namely the ones given by the irreducible components of $\calM(\gothp)_{\FF}$; and
\item  these dimension $t$ fibers are unions of products of $\PP^1$'s, such that the restriction of $\pi_2^*\dot \omega'^\kappa$ is $\calO(n)$ on each $\PP^1$-factor for some $n \geq -1$ (cf. \S\ref{S:proof of main prop} below)
\end{itemize}

\subsection{Proof of Proposition~\ref{P:image p^g}}
\label{S:proof of image p^g}
Since $\dot\omega^\kappa$ is an invertible sheaf on a scheme $\calM^{\tor, \ord}$ \emph{smooth} over $\Spec\calO$, it suffices to prove the proposition over a Zariski dense open subscheme, for example, the ordinary locus $\calM^\ord\subset\calM^{\tor, \ord}$ before taking the toroidal compactification. This is further equivalent to checking for the stalks at each closed point of $\calM^\ord$.
Let $k$ be a separably closed extension of $\FF$ and let $x:\Spec k\rightarrow\calM^{\textrm{ord}}$ be a map of $\Spec\calO$-schemes. Let
$y:\Spec k\rightarrow\calM(\gothp)^{\textrm{ord}}$ be
an element in the scheme-theoretic fiber $\pi_{1}^{-1}(x)$. Denote by $R_{x}$
the strictly henselian local ring of the stalk of $\calO_{\calM}$ at $x$, and similarly for $R_{y}$ and
$R_{\pi_{2}(y)}$. 

By Proposition \ref{P: fflat}, the morphism $\pi_1$ is finite flat over the ordinary locus of $\calM_{\FF}$, so that the fiber $\pi_{1}^{-1}(x)$ is a finite scheme over $\Spec k$ and the map $R_{x}\rightarrow R_{y}$ is finite flat. By Remark \ref{R:compatibility} it then suffices to show that the composition:
\begin{equation}
\dot\omega_{\pi_{2}(y)}^\kappa\overset{\eta_y^\kappa}{\longrightarrow}\dot\omega_{\gothp,y}^\kappa\simeq
(\pi_1^\ast\dot\omega^\kappa)_y=
R_{y}\otimes_{R_{x}}\dot\omega_{x}^\kappa\overset{\mathrm{Tr}_{y|x}\otimes1}%
{\longrightarrow}\dot\omega_{x}^\kappa\label{comp}%
\end{equation}
has image in $p^f\dot\omega_{x}^\kappa$. Here $\eta_y^\kappa$ is induced by pulling back differentials, the isomorphism is induced by the ``contraction map'', and
$\mathrm{Tr}_{y|x}:R_{y}\rightarrow R_{x}$ is the $R_{x}$-linear \emph{finite flat} trace map. In particular, the information of the Kodaira--Spencer isomorphism is contained in the trace map $\mathrm{Tr}_{y|x}$.

Assume that $x$ corresponds to the abelian scheme $A/\Spec k$
with extra structures, and that $y$ corresponds to an isogeny $A\rightarrow A'$
defined over $\Spec k$ and with kernel $C$ (notice we can forget about the filtrations because
we are working over the ordinary locus).
The closed, finite flat $k$-group scheme $C\subset A[\mathfrak{p}]$ has rank
$p^{f}$ and it comes with an action of $\calO_F/\mathfrak{p}=:\FF_\gothp$.

We now prove the desired result distinguishing two cases, depending on whether $C$ is \'etale or multiplicative (no other possibilities occur in our settings as $A$ is ordinary and $C$ is $\calO_F$-stable). We work with the strict henselization of the stalks modulo $p^f$.

\underline{Case 1}: $C$ is multiplicative. In this case the kernel of the universal isogeny $\phi$ over $R_y/(p^f)$ is isomorphic to $\FF_\gothp\otimes_\ZZ\mu_{p/R_y/(p^f)}$. Recall that $\Sigma_\gothp$ denotes the set of field embeddings $F\to\overline\QQ_p$ inducing the $p$-adic place $\gothp$. For each $\tau\in\Sigma_\gothp$ the pull-back map $(\dot\omega_{\pi_{2}(y)})_\tau\to(\dot\omega_{\gothp,y})_\tau$ is zero modulo $\tau(\varpi_\gothp)$. In particular, $\eta_y^\kappa$ is zero modulo $p^f$ as desired, since hence $\sum_{\tau\in\Sigma_\gothp}k_\tau\geq ef$.

\underline{Case 2}: $C$ is \'{e}tale. The $\hat{R}_{x}/(p^f)$-algebra $\hat{R}_{y}/(p^f)$
(the ``hat'' denotes completion) classifies splittings of the connected-\'{e}tale sequence
of $\calA[\gothp]$, where $\calA$ is the universal ordinary abelian scheme over $\hat{R}_{x}/(p^f)$. 
These splittings are a torsor
under the group-scheme 
\[
\mathrm{Hom}_{\FF_\gothp,\Spec(\hat{R}_{x}/(p^f))}(\mathcal{A}[\mathfrak{p}]^{\mathrm{\acute{e}t}},\mathcal{A}[\mathfrak{p}]^{\circ})\simeq
\FF_\gothp\otimes_\ZZ\mu_{p/\hat{R}_{x}/(p^f)}
\]
defined over $\hat{R}_{x}/(p^f)$. Such torsors are classified by $H_{\operatorname*{fppf}}^{1}(\Spec \hat{R}_{x}/(p^f),
\FF_\gothp\otimes_\ZZ\mu_{p/\hat{R}_{x}/(p^f)})$, which by Kummer theory is equal to $\FF_\gothp\otimes_\ZZ
(\hat R_x/(p^f))^\times/(\hat R_x/(p^f))^{\times p}$. Therefore there are units
$u_{i}\in(\hat R_x/(p^f))^\times$ which are not $p$th powers such that
\[
\hat{R}_{y}/(p^f)\simeq\frac{\hat{R}_{x}/(p^f)[X_{1},\dots,X_{f}]}{(X_{1}^{p}-u_{1}%
,\dots,X_{f}^{p}-u_{f})}.
\]
This implies that the $\hat{R}_{x}/(p^f)$-linear trace map $\hat{R}_{y}/(p^f)\rightarrow\hat{R}_{x}/(p^f)$ is zero: the only thing to check is the
vanishing of the trace on the identity element, which occurs since $\hat
{R}_{y}/(p^f)$ has rank $p^{f}$ as an $\hat{R}_{x}/(p^f)$-module. We deduce that also the trace map $\mathrm{Tr}_{y|x}:{R}_{y}/(p^f)\rightarrow{R}_{x}/(p^f)$ is zero, and hence the map (\ref{comp}) vanishes modulo $p^f$, as desired.
\hfill \qed

\subsection{Proof of Proposition~\ref{P:abstract hom}}
\label{S:proof of abstract hom}
Using the exact sequence
\[
0 \to \pi_2^*\dot \omega'^\kappa \xrightarrow{\cdot \varpi}\pi_2^*\dot \omega'^\kappa \longrightarrow \pi_2^*\dot \omega'^\kappa_{\FF} \to 0,
\]
we see that, if $R^t\pi_{1, *}\pi_2^*\dot \omega'^\kappa_{\FF}$ (for $t>0$) is zero when localized at a codimension $t$ point $x \in \calM_{\FF}$, then multiplication by $\varpi$ is surjective on $R^t\pi_{1, *}\pi_2^*\dot \omega'^\kappa$ when localized at $x$. By Nakayama's Lemma, this means that $R^t\pi_{1, *}\pi_2^*\dot \omega'^\kappa$ is trivial when localized at $x$, as $R^t\pi_{1, *}\pi_2^*\dot \omega'^\kappa$ is a coherent sheaf.
So the condition of this Proposition implies that
\begin{itemize}
\item
the (set-theoretical) support of $R^t\pi_{1,*}\pi_2^*\dot \omega'^\kappa$ has codimension $>t$ in the special fiber $\calM_{\FF}$ for all $t\geq 1$.
\end{itemize}

For two complexes $C^\bullet, D^\bullet \in D^b(\calM)$, we may define the sheaf of morphisms $\calHom_{D^b(\calM)}(C^\bullet, D^\bullet)$ as the (coherent) sheaf on $\calM$ whose evaluation on an open affine subspace $\calU\subseteq \calM$ is
\[
\Hom_{D^b(\calU)}\big(C^\bullet \otimes_{\calO_\calM} \calO_\calU, D^\bullet \otimes_{\calO_{\calM}} \calO_\calU\big).
\]
This the same as the $H^0$ of the derived $R\calHom_{D^b(\calM)}(C^\bullet , D^\bullet)$.
For $i \in \ZZ$, we write $\calHom^i_{D^b(\calM)}(C^\bullet ,D^\bullet)$ for $\calHom_{D^b(\calM)}(C^\bullet ,D^\bullet[i])$.

Consider the long exact sequence obtained by applying the functor $\calHom^\bullet_{D^b(\calM)}(R\pi_{1,*}\pi_2^*\dot \omega'^\kappa, -)$ to $0 \to \dot \omega^\kappa \xrightarrow{\cdot p^f} \dot \omega^\kappa \to \dot \omega^\kappa / p^f \to 0$. To show that the map $\eta$ factors uniquely as \eqref{E:sigma factor}, it is enough to show that
\begin{itemize}
\item[(a)]
(existence of factorization)
the composition $R\pi_{1,*}\pi_2^*\dot \omega'^\kappa \xrightarrow{\eta}
\dot \omega^\kappa \to \dot \omega^\kappa / p^f$ is the zero element in
$
\calH om_{D^b(\calM)}(R\pi_{1,*}\pi_2^*\dot \omega'^\kappa, \dot \omega^\kappa / p^f)
$, and
\item[(b)] (uniqueness of the factorization)
$\calH om^{-1}_{D^b(\calM)}(R\pi_{1,*}\pi_2^*\dot \omega'^\kappa, \dot \omega^\kappa / p^f) = 0$.
\end{itemize}

We claim that the natural map $\pi_{1,*}\pi_2^*\dot \omega'^\kappa \to R\pi_{1,*}\pi_2^*\dot \omega'^\kappa$ induces an injection
\begin{equation}
\label{E:inj in derived category}
\calHom^i_{D^b(\calM)}(R\pi_{1,*}\pi_2^*\dot \omega'^\kappa, \dot \omega^\kappa / p^f) \hookrightarrow \calHom^i_{D^b(\calM)}(\pi_{1,*}\pi_2^*\dot \omega'^\kappa, \dot \omega^\kappa / p^f) \cong \calHom^i_{\calO_\calM}(\pi_{1,*}\pi_2^*\dot \omega'^\kappa, \dot \omega^\kappa / p^f),
\end{equation}
for $i=-1,0$. Here $\calHom^{-1}_{\calO_\calM}(\pi_{1,*}\pi_2^*\dot \omega'^\kappa, \dot \omega^\kappa / p^f)$ is automatically zero because $\calHom$ is left exact; so (b) follows from this injectivity.
Similarly, when $i=0$, the injectivity implies that a map in $\calHom_{D^b(\calM)}(R\pi_{1,*}\pi_2^*\dot \omega'^\kappa, \dot \omega^\kappa / p^f)$ is zero if and only its induced map in $\calHom_{\calO_\calM}(\pi_{1,*}\pi_2^*\dot \omega'^\kappa, \dot \omega^\kappa / p^f)$ is zero, which is in turn equivalent to its restriction to $\calM^\ord$ is zero. So this together with the assumption of the Proposition implies (a). In summary, it is enough to prove the claim on the injectivity of \eqref{E:inj in derived category}.


Represent $R\pi_{1,*}\pi_2^*\dot \omega'^\kappa$ by a complex $C^\bullet$. For $t \in \ZZ_{\geq 0}$, we denote by $\tau_{\leq t}C^\bullet$ its truncation at degree $\leq t$, that is, the unique object in $D^b(\calM)$ (up to quasi-isomorphism) such that $H^{>t}(\tau_{\leq t}C^\bullet) = 0$, and there is a map $\tau_{\leq t}C^\bullet \to C^\bullet$ inducing isomorphisms on the cohomology groups with degree $\leq t$.
In particular, $\tau_{\leq 0}C^\bullet \cong \pi_{1,*}\pi_2^*\dot \omega'^\kappa$ and $\tau_{\leq t}C^\bullet \cong C^\bullet$ for $t \gg 0$.

To (inductively) prove the injectivity of \eqref{E:inj in derived category}, it suffices to prove that the following map is injective for every $t \geq 1$ and $i=-1,0$:
\[
\calHom^i_{D^b(\calM)}(\tau_{\leq t} C^\bullet, \dot \omega^\kappa/p^f) \hookrightarrow
\calHom^i_{D^b(\calM)}(\tau_{\leq t-1} C^\bullet, \dot \omega^\kappa/p^f).
\]
Using the long exact sequence associated to the tautological exact triangle 
\[
\tau_{\leq t-1}C^\bullet \to \tau_{\leq t} C^\bullet \to R^{t}\pi_{1,*}\pi_2^*\dot \omega'^\kappa [-t] \xrightarrow{+1},
\]
it suffices to show that
\[
\calE xt^j_{\calO_\calM}(R^{t}\pi_{1,*}\pi_2^*\dot \omega'^\kappa, \dot \omega^\kappa /p^f) = 0 \quad \textrm{for} \quad j = t-1, t.
\]
Looking at the long exact sequence induced by the following exact sequence $0 \to \dot \omega^\kappa \xrightarrow{\cdot p^f} \dot \omega^\kappa \to \dot \omega^\kappa / p^f \to 0$, it is enough to show that
\[
\calE xt^i_{\calO_\calM}(R^{t}\pi_{1,*}\pi_2^*\dot \omega'^\kappa, \dot \omega^\kappa) = 0. \quad \textrm{for} \quad i = t-1, t, t+1.
\]
Since $\calM$ is regular, the condition that the (set theoretical) support of $R^t\pi_{1,*}\pi_2^*\dot \omega'^\kappa$ has codimension $>t$ in $\calM_{\FF}$ (and hence codimension $>t+1$ in $\calM$), implies that 
\[
\calE xt^{\leq t+1}_{\calO_\calM}(R^{t}\pi_{1,*}\pi_2^*\dot \omega'^\kappa, \dot \omega^\kappa) = 0.
\]
This completes the proof of Proposition~\ref{P:abstract hom}. \hfill \qed

\section{Proof of Proposition~\ref{P:ample=>small support}}
\label{Sec:section 4}
This section is entirely devoted to proving Proposition~\ref{P:ample=>small support}. While this is the essential technical core of the construction of $T_\gothp$, readers may choose to skip it and proceed to the last section directly.
We keep the notation from the previous sections.

\subsection{Recollection of the moduli problems for $\calM$ and $\calM(\gothp)$}
\label{S:recall M(p)}
We first recall from the definition of $\calM$ in \S\ref{S:moduli HBAS} that, for the universal abelian variety $\calA$ over $\calM$, we have a universal filtration
\[
0 = \calF_{\gothp_i,j}^{(0)} \subsetneq \calF_{\gothp_i,j}^{(1)} \subsetneq \cdots \subsetneq  \calF_{\gothp_i,j}^{(e_i)} = \omega_{\calA/\calM,\gothp_i, j} \quad \textrm{for }i=1, \dots, r; \ j=1, \dots, f_i,
\]
with subquotients $\dot \omega_{\tau_{\gothp_i,j}^l}$.
For $l = 1, \dots, e_i$, we put
\[
\calH_{\gothp_i,j}^{(l)}: = \big\{ z \in ( \calF_{\gothp_i,j}^{(l-1)})^\perp / \calF_{\gothp_i,j}^{(l-1)} \ \big|\ [\varpi_i]z - \tau_{\gothp_i,j}^l(\varpi_i)z=0 \big\},
\]
where $(\calF_{\gothp_i,j}^{(l-1)})^\perp$ is the orthogonal complement of $\calF_{\gothp_i,j}^{(l-1)}$ in $H^1_\dR(\calA/\calM)_{\gothp_i,j}$ with respect to the natural pairing induced by the polarization.
By \cite[Corollary~2.10]{reduzzi-xiao}, we have
\begin{enumerate}
\item
each $\calH_{\gothp_i,j}^{(l)}$ is a locally free coherent sheaf of rank two over $\calM$,
\item
$\calF_{\gothp_i,j}^{(l)} / \calF_{\gothp_i,j}^{(l-1)}$ is a rank one subbundle of $\calH_{\gothp_i,j}^{(l)}$ (this is in fact a corollary of the previous point because $\calF_{\gothp_i,j}^{(l)} / \calF_{\gothp_i,j}^{(l-1)}$ is a subbundle of $H^1_\dR(\calA /\calM)_{\gothp_i,j} / \calF_{\gothp_i,j}^{(l-1)}$ contained in $\calH_{\gothp_i,j}^{(l)}$ since $[\varpi_i] - \tau_{\gothp_i,j}^{(l)}(\varpi_i)$ kills $\calF_{\gothp_i,j}^{(l)} / \calF_{\gothp_i,j}^{(l-1)}$),  and
\item
there is a canonical isomorphism
$
\wedge^2_{\calO_\calM} \calH_{\gothp_i,j}^{(l)} \cong \dot \epsilon_{\tau_{\gothp_i,j}^l}.
$
\end{enumerate} Moreover, by \cite[Constructions~3.3 and 3.6]{reduzzi-xiao}, the partial Hasse invariants we recalled in \S\ref{S:partial Hasse invariants} extend to \emph{surjective} homomorphisms over $\calM_\FF$
\begin{equation}
\label{E:partial Hasse}
m_{\varpi_i,j}^{(l)}: \calH_{\gothp_i,j}^{(l)}\twoheadrightarrow \calF_{\gothp_i,j}^{(l-1)} / \calF_{\gothp_i,j}^{(l-2)} \subset \calH_{\gothp_i,j}^{(l-1)} \quad \textrm{and} \quad \mathrm{Hasse}_{\varpi_i,j}: \calH^{(1)}_{\gothp_i,j} \twoheadrightarrow \dot \omega_{\tau_{\gothp_i,j-1}^{e_i}}^{\otimes p} \subset \big(\calH_{\gothp_i,j-1}^{(e_i)} \big)^{(p)},
\end{equation}
for $i = 1, \dots, r$, $j = 1, \dots, f_i$, and $l = 2, \dots, e_i$.

Now, we recall from the definition of $\calM(\gothp)$ in \S\ref{S:setup section 3} that, over $\calM(\gothp)$, we have the following two isogenies of universal abelian varieties.
\[
\phi: \calA \to \calA' \quad \textrm{and} \quad \psi: \calA' \to \calA.
\]
We write $\calF'^{(l)}_{\gothp_i,j}$ and $\calH'^{(l)}_{\gothp_i,j}$ for the corresponding constructions for $\calA'$. By the proof of Proposition~\ref{P:LCI}, we see that $\phi^*$ and $\psi^*$ induce homomorphisms
\begin{equation}
\label{E:phi psi}
\phi^*_{\gothp_i,j,l}:\calH'^{(l)}_{\gothp_i,j} \to \calH^{(l)}_{\gothp_i,j}
\quad \textrm{and} \quad
\psi^*_{\gothp_i,j,l}:\calH^{(l)}_{\gothp_i,j} \to \calH'^{(l)}_{\gothp_i,j},
\end{equation}
such that $\phi_{\gothp_i,j,l}^*$ and $\psi_{\gothp_i,j,l}^*$ are isomorphisms if $\gothp_i \neq \gothp$, and, over $\calM_\FF$, we have
\[
\mathrm{Im}(\phi^*_{\gothp,j,l}) = \Ker (\psi^*_{\gothp,j,l}) \quad \textrm{and} \quad \mathrm{Im}(\psi^*_{\gothp,j,l}) = \Ker (\phi^*_{\gothp,j,l})
\]
and both modules are subbundles of rank one of the corresponding rank two vector bundles over $\calM_\FF$.
One can check from the definition that $\phi_{\gothp_i,j,l}^*$ and $\psi_{\gothp_i,j,l}^*$ respect the partial Hasse invariant maps \eqref{E:partial Hasse}, namely we have commutative diagrams of sheaves over $\calM(\gothp)_\FF$
\[
\xymatrix@C=40pt{
\calH_{\gothp_i,j}^{(l)}\ar[r]^{\psi^*_{\gothp_i,j,l}} \ar[d]^{m_{\varpi_i,j}^{(l)}} & \calH'^{(l)}_{\gothp_i,j} \ar[r]^{\phi^*_{\gothp_i,j,l}}\ar[d]^{m_{\varpi_i,j}^{(l)}} & \calH_{\gothp_i,j}^{(l)}\ar[d]^{m_{\varpi_i,j}^{(l)}}
\\
\calH_{\gothp_i,j}^{(l-1)}\ar[r]^{\psi^*_{\gothp_i,j,l}} & \calH'^{(l-1)}_{\gothp_i,j} \ar[r]^{\phi^*_{\gothp_i,j,l}} & \calH_{\gothp_i,j}^{(l-1)}
} \textrm{and} 
\xymatrix@C=40pt{
\calH_{\gothp_i,j}^{(1)}\ar[r]^{\psi^*_{\gothp_i,j,l}}\ar[d]^{\mathrm{Hasse}_{\varpi_i,j}} & \calH'^{(1)}_{\gothp_i,j} \ar[r]^{\phi^*_{\gothp_i,j,l}}\ar[d]^{\mathrm{Hasse}_{\varpi_i,j}} & \calH_{\gothp_i,j}^{(1)}\ar[d]^{\mathrm{Hasse}_{\varpi_i,j}}
\\
(\calH_{\gothp_i,j-1}^{(e_i)})^{(p)}\ar[r]^{\psi^*_{\gothp_i,j-1,l}} & (\calH'^{(e_i)}_{\gothp_i,j-1})^{(p)} \ar[r]^{\phi^*_{\gothp_i,j-1,l}} & (\calH_{\gothp_i,j-1}^{(e_i)})^{(p)},
} 
\]
for $i =1, \dots, r$, $j =1, \dots, f_i$, and $l = 2, \dots, e_i$.

\subsection{Frobenius factor}
\label{S:Frobenius factor}
Let $k$ be a perfect field.
A map $g: Y \to X$ of $k$-schemes is called a \emph{Frobenius factor} if it induces bijection on closed points, and there are an integer $s\in \NN$ and a morphism $g' : X^{(p^s)} \to Y$ such that the composition $g\circ g'$ is the relative $p^s$-Frobenius on $X$, where $X^{(p^s)}: = X \times_{k, \Frob_{p^r}}k$.

It is proved in \cite[Proposition~4.8]{helm} that if $g: Y \to X$ is a proper morphism of $k$-schemes of finite type that induces bijection on geometric points and if $Y$ is reduced and $X$ is normal, then $g$ is a Frobenius factor.


\begin{notation}

For a subset $\ttS\subseteq \Sigma_\gothp$, we write $\ttS^c$ for $\Sigma_\gothp \backslash \ttS$. Recall that $e$ and $f$ denote the ramification and inertia degree of $\gothp$ over $p$.

Recall that, elements in $\Sigma_\gothp$ come with a chosen order, as fixed in \S\ref{S:setup}.
We consider the map $\theta: \Sigma_\gothp \to \Sigma_\gothp$ given by $\theta(\tau_{\gothp, j}^l) = \tau_{\gothp, j}^{l+1}$ if $l <e$ and $\theta(\tau_{\gothp, j}^e) = \tau_{\gothp, j-1}^1$. 
When $\gothp$ is unramified over $p$, $\theta$ is exactly the action of Frobenius $\sigma$ on $\Sigma_\gothp$.
This way, the partial Hasse invariant map at $\tau$ \eqref{E:partial Hasse} is a map
\[
\dot \omega_{\tau} \longto \dot\omega_{\theta^{-1}\tau} \textrm{ or }\dot \omega_{\theta^{-1}\tau}^{\otimes p}.
\]
We recommend the readers to assume that $\gothp$ is unramified over $p$ when reading this section first time. It helps to understand the key point of the argument.

To better imitate the unramified case,  we rename $\tau_{\gothp,j}^l$ into $\tau_{(j-1)e+l}$ and set $\tau_{a} = \tau_{a \bmod ef}$, so that we have $\theta(\tau_a) = \tau_{a+1}$.
Accordingly, we write
$\calH_{\tau_{(j-1)e+l}}$ for $\calH_{\gothp_i,j}^{(l)}$, write $\phi^*_{\tau_{(j-1)e+l}}$ for $\phi^*_{\gothp,j,l}$, write $\psi^*_{\tau_{(j-1)e+l}}$ for $\psi^*_{\gothp,j,l}$, and write $\mathrm{Ha}_{\tau_{(j-1)e+l}}$  for the corresponding partial Hasse invariant map \eqref{E:partial Hasse} on $\calA_\FF$, namely $m_{\varpi, j}^{(l)}$ if $l \neq 1$ and $\mathrm{Hasse}_{\varpi,j}$ if $l =1$.
Similarly, $\Ha'_{\tau}$ denotes the partial Hasse invariant maps between $\calH'_\tau$'s that are defined for $\calA'_\FF$.
\end{notation}

\subsection{A combinatorics construction}
\label{S:combinatorics}
For later convenience, we make explicit the following combinatorics construction:
for each $\ttS$ a nonempty proper subset of $\Sigma_\gothp$, we decompose the set $\ttS$ into the union of ``adjacent places" (according to the action of $\theta$ on $\Sigma_\gothp$), namely, we write
\begin{equation}
\label{E:ttS}
\ttS = \{\tau_{a_t-1}, \dots, \tau_{a_t-\lambda_t}, \tau_{a_{t-1}-1}, \dots, \tau_{a_{t-1}-\lambda_{t-1}}, \dots, \tau_{a_1-1}, \dots, \tau_{a_1-\lambda_1}
\},
\end{equation}
where the numbers
\begin{itemize}
\item[(1)]
$a_1, \dots, a_t \in \{1, \dots, ef\}$ are in increasing order such that $a_{i+1} - a_i \geq 2$ for all $i\geq 2$ and $a_1 +ef - a_t \geq 2$, and
\item[(2)]
$\lambda_i \in \{1, \dots, a_i- a_{i-1}-1\}$ for all $i \geq 2$ and $\lambda_1 \in \{1, \dots, a_1+ef-a_t-1\}$.
\end{itemize}
Then we have
\[\ttT:= \theta(\ttS) \backslash \ttS = \{\tau_{a_1}, \dots, \tau_{a_t}\},  \quad \textrm{and} \quad \ttS \backslash\theta(\ttS) = \{\tau_{a_1-\lambda_1}, \tau_{a_2-\lambda_2}, \dots, \tau_{a_t-\lambda_t}\}.
\]

Recall the stratification $\dot X_\ttT$ of $\calM_{\FF}$ in \S \ref{S:Goren-Oort stratification} and the stratification $\dot Y_{\ttS, \ttS'}$ of $\calM(\gothp)_\FF$ in Notation~\ref{N:stratification Iwahori}. The following is a key geometric result on the restriction of the natural map $\pi_1: \calM(\gothp)_\FF \to \calM_\FF$ when restricted to these strata. 
\begin{proposition}
\label{P:geometry of HMV(p)}
Let $\ttS$ be a subset of $\Sigma_\gothp$, and put $\ttT := \theta(\ttS) \backslash \ttS$. Then $\pi_1(\dot Y_{\ttS, \ttS^c}) = \dot X_\ttT$.
More precisely, if we write $\mathrm{pr}: \dot Y'_{\ttS, \ttS^c} \to \dot X_\ttT$ for the $(\PP^1)^{\#\ttT}$-bundle given by $\prod_{\tau \in \ttS \backslash\theta(\ttS)} \PP(\calH_\tau)$ (where the product is taken over $\dot X_\ttT$), then $\pi_1|_{\dot Y_{\ttS, \ttS^c}}$ factors as
\[
\dot Y_{\ttS, \ttS^c} \xrightarrow{\ g\ } \dot Y'_{\ttS, \ttS^c} \xrightarrow{\ \mathrm{pr} \ }\dot  X_\ttT, 
\]
where $g$ is a Frobenius factor.
\end{proposition}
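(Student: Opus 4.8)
The plan is to establish the proposition in three steps and then apply the Frobenius-factor criterion of \cite[Proposition~4.8]{helm}: a proper morphism of finite-type $\FF$-schemes that induces a bijection on geometric points, has reduced source, and has normal target, is a Frobenius factor. The three steps are: (i) $\pi_1(Y_{\ttS,\ttS^c})\subseteq X_\ttT$; (ii) the construction of a morphism $g\colon Y_{\ttS,\ttS^c}\to Y'_{\ttS,\ttS^c}$ over $X_\ttT$ with $\mathrm{pr}\circ g=\pi_1|_{Y_{\ttS,\ttS^c}}$; and (iii) the bijectivity of $g$ on geometric points. Granting these, the hypotheses of \cite[Proposition~4.8]{helm} hold: $Y_{\ttS,\ttS^c}$ is smooth over $\FF$ by Corollary~\ref{C:strata iwahori} (it has dimension $g$ since $\ttS\cup\ttS^c=\Sigma_\gothp$), hence reduced; $\ttT=\theta(\ttS)\backslash\ttS$ is a proper subset of $\Sigma$, so $X_\ttT$ is smooth and proper by Proposition~\ref{T:smoothness of ramified GO strata}, whence the iterated $\PP^1$-bundle $Y'_{\ttS,\ttS^c}$ over $X_\ttT$ is smooth, hence normal; and since $\pi_1$ is projective and, by (i), $\pi_1|_{Y_{\ttS,\ttS^c}}$ factors through the proper scheme $X_\ttT$, the scheme $Y_{\ttS,\ttS^c}$ is proper over $\FF$ and the map $g$ is proper. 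Once $g$ is known to be a Frobenius factor it is in particular surjective, which combined with the surjectivity of $\mathrm{pr}$ upgrades (i) to the asserted equality $\pi_1(Y_{\ttS,\ttS^c})=X_\ttT$.

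For (i) I will use the commutative squares of \ref{S:recall M(p)} relating $\phi^*$, $\psi^*$ and the partial Hasse invariants. Fix $\tau\in\ttT$; then $\tau\in\ttS^c$ and $\theta^{-1}\tau\in\ttS$. On $Y_{\ttS,\ttS^c}$ the vanishing of the subquotient map $\psi^*_\tau$ forces $\dot\omega_\tau=\Ker\psi^*_{\gothp,j,l}=\mathrm{Im}\,\phi^*_{\gothp,j,l}$ inside $\calH_\tau$, both $\dot\omega_\tau$ and $\Ker\psi^*_{\gothp,j,l}$ being rank-one subbundles of $\calH_\tau$ over $\calM(\gothp)_\FF$; likewise $\theta^{-1}\tau\in\ttS$ forces $\dot\omega'_{\theta^{-1}\tau}=\Ker\phi^*_{\theta^{-1}\tau}$. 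Using the commuting square $\Ha_\tau\circ\phi^*_\tau=\phi^*_{\theta^{-1}\tau}\circ\Ha'_\tau$ together with $\mathrm{Im}\,\Ha'_\tau=\dot\omega'_{\theta^{-1}\tau}$ (up to the harmless Frobenius twist that occurs when $\tau=\tau^1_{\gothp,j}$), one gets
\[
\Ha_\tau(\dot\omega_\tau)=\mathrm{Im}\big(\Ha_\tau\circ\phi^*_\tau\big)=\phi^*_{\theta^{-1}\tau}\big(\mathrm{Im}\,\Ha'_\tau\big)=\phi^*_{\theta^{-1}\tau}\big(\dot\omega'_{\theta^{-1}\tau}\big)=0,
\]
so the partial Hasse invariant $h_\tau$ vanishes identically on $Y_{\ttS,\ttS^c}$; intersecting over $\tau\in\ttT$ gives $\pi_1(Y_{\ttS,\ttS^c})\subseteq X_\ttT$.

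For (ii), over $\calM(\gothp)_\FF$ the image $\mathrm{Im}\,\phi^*_{\gothp,j,l}$ is a rank-one subbundle of $\calH^{(l)}_{\gothp,j}$ (\ref{S:recall M(p)}), hence defines a section of $\PP(\calH_\tau)$ for $\tau=\tau^l_{\gothp,j}$. I will define $g$ on $Y_{\ttS,\ttS^c}$ to record $\pi_1$ in the $X_\ttT$-factor and, for each $\tau\in\ttS\backslash\theta(\ttS)$, the subbundle $\mathrm{Im}\,\phi^*_\tau$ in the $\PP(\calH_\tau)$-factor; by (i) the first coordinate lands in $X_\ttT$, so $g$ is a morphism of $X_\ttT$-schemes, and $\mathrm{pr}\circ g=\pi_1|_{Y_{\ttS,\ttS^c}}$ by construction. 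Only the indices in $\ttS\backslash\theta(\ttS)$ appear because the lines $\mathrm{Im}\,\phi^*_\tau$ for $\tau\in\ttS\cap\theta(\ttS)$ will be forced by the data already recorded (see (iii)); consistently, $\#(\ttS\backslash\theta(\ttS))=\#\ttT$ by the combinatorics of \eqref{E:ttS}.

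Step (iii) is the crux, and I expect it to be the main obstacle. Given a geometric point $A$ of $X_\ttT$ (with its Pappas--Rapoport filtration) and lines $\ell_\tau\subseteq\calH_\tau(A)$ for $\tau\in\ttS\backslash\theta(\ttS)$, one must show that there is exactly one geometric point of $Y_{\ttS,\ttS^c}$ whose $\pi_1$-image is $A$ and whose attached isogeny $\phi$ satisfies $\mathrm{Im}\,\phi^*_\tau=\ell_\tau$ for all such $\tau$. The plan is to translate this, via contravariant Dieudonn\'e theory for $A[\gothp]$ (together with the explicit local rings of Proposition~\ref{P:LCI} and Zink displays in the ramified case), into the classification of the $F$-, $V$- and $\calO_F$-stable isotropic Dieudonn\'e submodules of $A[\gothp]$ whose induced filtration positions agree with the prescribed data, and then to carry out the reconstruction block by block along the adjacency decomposition \eqref{E:ttS} of $\ttS$, in the spirit of \cite{goren-kassaei, helm} and of the local computations in \cite{tian-xiao}. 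On each maximal string $\{\tau_{a_i-\lambda_i},\dots,\tau_{a_i-1}\}\subseteq\ttS$ the chosen line at the left end determines, step by step, the positions at all places of the string and then at the place $\tau_{a_i}\in\ttT$ (where it reproduces exactly the Goren--Oort vanishing $h_{\tau_{a_i}}=0$), with no residual freedom; hence the fibre is a single point and $g$ is bijective on geometric points. The delicate point will be running this argument on the length-$e$ filtered splitting model rather than in the classical $e=1$ situation of \emph{loc.\ cit.}; here the commuting diagrams of \ref{S:recall M(p)} and the local model of Proposition~\ref{P:LCI} supply exactly what is needed. With (i)--(iii) in hand, \cite[Proposition~4.8]{helm} completes the proof.
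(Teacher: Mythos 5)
Your outline follows the paper's proof essentially step for step: the containment $\pi_1(Y_{\ttS,\ttS^c})\subseteq X_\ttT$ via the commutation of $\phi^*,\psi^*$ with the partial Hasse invariant maps, the lift $g$ defined by recording the line $\mathrm{Im}(\phi_\tau^*)\subseteq\calH_\tau$ for each $\tau\in\ttS\backslash\theta(\ttS)$, bijectivity of $g$ on geometric points via a Dieudonn\'e-theoretic reconstruction of the isogeny, and the criterion of \cite[Proposition~4.8]{helm} to conclude. Two remarks. First, your description of the uniqueness in step (iii) is not how the mechanism actually works: the positions at the interior places $\tau\in\ttS\cap\theta(\ttS)$ of each string are \emph{not} propagated step by step from the chosen line at the left end --- the partial Hasse maps go in the direction $\tau\mapsto\theta^{-1}\tau$, i.e.\ from right to left along a string, so nothing flows outward from $\tau_{a_i-\lambda_i}$. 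Rather, those positions are pinned down directly by the stratum conditions defining $Y_{\ttS,\ttS^c}$, namely $M_\tau=\Ker(\Ha_\tau)$ for $\tau\in\theta(\ttS)$ and $M_\tau=\dot\omega_{\calA,\tau}$ for $\tau\in\ttS^c$, both determined by the base point of $X_\ttT$ alone; the chosen lines at $\ttS\backslash\theta(\ttS)$ are genuinely unconstrained, and the compatibility $\Ha_\tau(M_\tau)\subseteq M_{\theta^{-1}\tau}$ then holds automatically for any such choice. Second, the existence half of step (iii) --- producing, from an arbitrary prescribed line at each $\tau\in\ttS\backslash\theta(\ttS)$, the full datum $(A',\lambda',\underline{\scrF}',\phi,\psi)$, which requires verifying that the candidate Dieudonn\'e submodule $\tilde M$ is $V$-stable with the correct colengths and then constructing the Pappas--Rapoport filtration on $\omega_{A'}$ --- is where essentially all of the work in the paper's proof lies, and your proposal names the strategy without carrying it out.
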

\begin{proof}
The cases of $\ttS= \emptyset$ and $\ttS= \Sigma_\gothp$ correspond to the closure of the ordinary locus of $\calM(\gothp)_{\FF}$. In this case, $\pi_1$ is the Frobenius map and an isomorphism, respectively.
From now on, we assume that $\ttS$ is a nonempty proper subset of $\Sigma_\gothp$.
We may assume that $\ttS$ is as described in \S\ref{S:combinatorics} so that $\ttT= \{\tau_{a_1}, \dots, 
\tau_{a_t}\}$.

We first show that $\pi_1(\dot Y_{\ttS, \ttS^c}) \subseteq \dot X_\ttT$. Take an $S$-point of $\dot Y_{\ttS, \ttS^c}$, and we write $\scrH_\tau$ and $\scrH'_\tau$ for the evaluation of $\calH_\tau$ and $\calH'_\tau$ at this $S$-point.
For $\tau \in \ttT = \ttS \backslash \theta(\ttS)$, namely $\tau \in \ttS$ and $\theta^{-1} \tau \in \ttS^c$, consider the following commutative diagram
\begin{equation}
\label{E:phi psi hasse}
\xymatrix@C=40pt{
\scrH_\tau \ar[r]^{\psi_\tau^*} \ar[d]^{\Ha_\tau} &
\scrH'_\tau \ar[r]^{\phi_\tau^*} \ar[d]^{\Ha'_\tau} &
\scrH_\tau \ar[d]^{\Ha_\tau}
\\
(\scrH_{\theta^{-1}\tau})^{(p)} \ar[r]^{\psi_{\theta^{-1}\tau}^*} &
(\scrH'_{\theta^{-1}\tau})^{(p)} \ar[r]^{\phi_{\theta^{-1}\tau}^*} &
(\scrH_{\theta^{-1}\tau}
)^{(p)}.
}
\end{equation}
Here and later, if $\tau = \tau_{a}$ with $a \not\equiv 1 \bmod e$, we loose all the Frobenius twists on the modules at $\theta^{-1}\tau$.
The condition $\theta^{-1}\tau \in \ttS^c$ implies that $\Ker(\phi^*_{\theta^{-1}\tau}) = \dot \omega_{A',\theta^{-1}\tau}^{\otimes p}$ (which would be $\dot \omega_{A',\theta^{-1}\tau}$ if $\tau = \tau_a$ with $a \not\equiv 1 \bmod e$, as said above).
Since the image of $\Ha'_{\tau}: \scrH'_\tau \to  \scrH'^{(p)}_{\theta^{-1}\tau}$ is exactly $\dot \omega_{A',\theta^{-1}\tau}^{\otimes p}$, we see that the composition $\phi^*_{\theta^{-1}\tau} \circ \Ha'_\tau$ is the zero map, so is the composition $\Ha_\tau \circ \phi_\tau^*$.
In particular, this means that $\mathrm{Im}(\phi_\tau^*) \subseteq \Ker(\Ha_\tau)$.
But $\tau \in \ttS$ implies that $\dot \omega_{A,\tau} \subseteq \Ker(\psi_\tau^*) = \mathrm{Im}(\phi_\tau^*)$. This shows that $\Ha_\tau(\dot \omega_{A,\tau}) = 0$, meaning that the image of this $S$-point under $\pi_1$ lies in $\dot X_\tau$ (for our chosen $\tau \in \ttT$). Applying this to each $\tau \in \ttT$ shows that $\pi_1(\dot Y_{\ttS, \ttS^c}) \subseteq \dot X_\ttT$.

We now construct the map morphism $g$ that factors the morphism $\pi_1$. Given an $S$-point $y$ of $\dot Y_{\ttS, \ttS^c}$, $\pi_1(y)$ belongs to $\dot X_\ttT$ as shown above.
For each $\tau \in \ttS \backslash \theta(\ttS)  = \{\tau_{a_1-\lambda_1}, \tau_{a_2-\lambda_2}, \dots, \tau_{a_r-\lambda_r}\}$,
the image
$
\phi_\tau^*(\scrH'_\tau) \subseteq \scrH_\tau
$
defines a rank one subbundle of the latter.
So this lifts the map $\pi_1$ to a map 
\[
g: \dot Y_{\ttS, \ttS^c} \longrightarrow  \PP(\calH_{\tau_{a_1-\lambda_1}}) \times_{\dot X_\ttT} \cdots \times_{\dot X_\ttT} \PP(\calH_{\tau_{a_r-\lambda_r}}) =:\dot Y'_{\ttS, \ttS^c}.
\]

Since $\dot Y_{\ttS, \ttS^c}$ is reduced and $\dot Y'_{\ttS, \ttS^c}$ is normal (as both are smooth over $\FF$), we may use the criterion of Frobenius factors as recalled in \S\ref{S:Frobenius factor}. To prove the Proposition, it suffices to show that $g$ induces a bijection on $k$-points for any algebraically closed field $k$ containing $\FF$.

We start with a $k$-point $\dot x = (A = \calA_{\dot x}, \lambda, \alpha, \underline \scrF = \underline \calF_{\dot x})$ of $\dot X_\ttT \cap \calM_\gothc$, with $k$ algebraically closed.
We write $\scrH_{\tau} $ for $\calH_{\tau, {\dot x}}$, and we ignore the Frobenius twist on each $\scrH_\tau$ ($\tau \in \Sigma_\gothp$), as it is a two-dimensional vector space over an algebraically closed field.

\underline{\bf Claim:} Giving a $k$-point $y$ in $\dot Y_{\ttS, \ttS^c} \cap \pi_1^{-1}({\dot x})$ is equivalent to specifying, for each $\tau \in \Sigma_\gothp$, a one-dimensional subspace $M_\tau \subseteq \scrH_\tau$ (which will be the image $\phi_\tau^*(\scrH'_\tau)$), such that
\begin{itemize}
\item[(i)] $
\Ha_\tau(M_\tau) \subseteq M_{\theta^{-1}\tau}$, and 
\item[(ii)]
for each $\tau \in \ttS^c$, $M_\tau  = \dot \omega_{A, \tau}$, and
\item[(iii)]
for each $\tau \in \theta(\ttS)$, $M_{\tau} = \Ker(\Ha_{\tau})$.
\end{itemize}
We temporarily assume this Claim and deduce the Proposition.
Note that, for $\tau \in \ttS^c \cup \theta(\ttS)$, $M_\tau$ is already uniquely determined by (ii) and (iii). Actually, for $\tau \in \ttS^c \cap \theta(\ttS) = \ttT$, $M_\tau$ is ``over-determined" as required to be equal to both $\dot \omega_{A, \tau}$ by (ii) and  $\Ker(\Ha_\tau)$ by (iii), but these two subspaces of $\scrH_\tau$ are equal as the Hasse invariant $\dot h_\tau$ vanishes at ${\dot x}$.
The only unspecified $M_\tau$'s are those with $\tau \in \ttS \backslash \theta(\ttS)$, or explicitly with $\tau \in \{ \tau_{a_r-\lambda_r}, \tau_{a_{r-1}- \lambda_{r-1}}, \dots, \tau_{a_1-\lambda_1}\}$ in terms of the combinatorics of \S\ref{S:combinatorics}. 
Their choices are equivalent to specifying a point in $Y'_{\ttS, \ttS^c}$ over $\dot x$.
To conclude, we observe that (i) holds for any such choice of $M_\tau$.
\begin{itemize}
\item
If $\tau \in \theta(\ttS^c)$, $\Ha_\tau(M_\tau) \subseteq \mathrm{Im}(\Ha_\tau) = \dot \omega_{A, \theta^{-1}\tau} = M_{\theta^{-1}\tau}$ by (ii).
\item
If $\tau \in \theta(\ttS)$, $\Ha_\tau(M_\tau) = 0$ by (iii).
\end{itemize}
This completes the proof of the Proposition assuming the Claim.

Now we turn to the proof of the Claim.
First, given a point $\dot y = ((A, \lambda, \alpha, \underline \scrF); (A', \lambda',\alpha', \underline \scrF'); \phi, \psi)   $ in $\dot Y_{\ttS, \ttS^c} \cap \pi_1^{-1}({\dot x})$, we set $M_\tau : = \phi_\tau^*(\scrH'_\tau)$. Since $\dot y \in \dot Y_{\ttS, \ttS^c}$, we deduce (ii) and (iii) as follows, which further imply (i) formally as shown above.
\begin{itemize}
\item
for each $\tau \in \ttS^c$, $M_\tau: = \mathrm{Im}(\phi^*_\tau) = \mathrm{Ker}(\psi^*_\tau) = \dot \omega_{A, \tau}$, and
\item
for each $\tau \in \theta(\ttS)$, $\dot \omega_{A,\theta^{-1}\tau}= \Ker(\phi_{\theta^{-1}\tau}^*)$; so $\phi_{\theta^{-1}\tau}^* \circ \Ha'_{ \tau} = 0$, or equivalently $\Ha_{\tau} \circ \phi^*_{\tau} =0$. From this we see that $M_{\tau} := \mathrm{Im}(\phi^*_\tau) = \Ker(\Ha_\tau)$.

\end{itemize}

Conversely, given $M_\tau$ for $\tau \in \Sigma_\gothp$ satisfying (i)--(iii), let $\tilde \calD(A)$ denote the contravariant Dieudonn\'e module, which decomposes into the direct sum
\[
\tilde \calD(A) = \bigoplus_{i=1}^r \bigoplus_{j=1}^{f_i} \tilde \calD(A)_{\gothp_i,j}
\]
according to the $\calO_F$-action, so that each summand $\tilde \calD(A)_{\gothp_i,j}$ is a free module of rank two over $\calO_{F_{\gothp_i}} \otimes_{W(\FF_{\gothp_i}), \tau_{\gothp_i,j}} W(k)=:\calO_{F_{\gothp_i},k}$ (which is a complete discrete valuation ring). In particular, $\tilde \calD(A)_{\gothp_i,j}/p \cong H^1_\dR(A)_{\gothp_i,j}$ is a $k[x_i]/(x_i^{e_i})$-module free of rank two, where the $x_i$ acts by $[\varpi_i]$. 
In particular, we write $x$ for the action of $[\varpi]$ on $\tilde \calD(A)_{\gothp,j}$.
We write 
\[
\mu_{\gothp_i,j}: \tilde \calD(A)_{\gothp_i,j} \to \tilde \calD(A)_{\gothp_i,j}/p \cong H^1_\dR(A)_{\gothp_i,j}\] for the naturally induced map.
For each $l =0, 1, \dots, e$, we write $\tilde \scrF_{\gothp,j}^{(l)}$ for the preimage of $\scrF_{\gothp,j}^{(l)}$ under the map $\mu_{\gothp,j}$, and for $l=1, \dots, e$, write $\tilde \scrH_{\gothp,j}^{(l)}: = x^{-1} \tilde \scrF_{\gothp,j}^{(l)}$ so that its image under \[
\mu_{\gothp,j}^{(l)}:
\tilde \calD(A)_{\gothp,j}  \xrightarrow {\mu_{\gothp,j}} H^1_\dR(A)_{\gothp,j} \to  H^1_\dR(A)_{\gothp,j} / \scrF_{\gothp,j}^{(l-1)}
\]
is exactly $\scrH_{\gothp,j}^{(l)}$. In particular, $\tilde \scrH_{\gothp,j}^{(l)}$ contains $\tilde \scrF_{\gothp,j}^{(l)} = x \tilde \scrH_{\gothp,j}^{(l-1)}$ as $\calO_{F_{\gothp},k}$-modules with colength $1$, and $\tilde \scrH_{\gothp,j}^{(1)} = x^{-1} p\tilde \calD(A)_{\gothp,j}$. Here and later, we say an inclusion $M \subseteq N$ of $\calO_{F_\gothp,k}$-modules have \emph{colength $i$} if $N/M$ is a successive extension of $i$ copies of $\calO_{F_\gothp,k} / (\varpi) \cong k$ as an $\calO_{F_{\gothp},k}$-module.

Now for each $l = 1, \dots, e$, we define 
$\tilde \scrH'^{(l)}_{\gothp,j}$ to be the preimage of $M_\tau \subseteq \scrH_\tau$ for $\tau = \tau_{\gothp_i,j}^{l}$ under the above map $\mu_{\gothp,j}^{(l)}$; it is
an $\calO_{F_{\gothp},k}$-submodule  of $\tilde \scrH^{(l)}_{\gothp,j}$ of colength $1$. Consider the $\calO_F$-stable submodule $\tilde M = \oplus_{i=1}^r\oplus_{j=1}^{f_i} \tilde M_{\gothp_i,j}$ of $\tilde \calD(A)$ with
\[
\tilde M_{\gothp_i,j} := \begin{cases}
\tilde \calD(A)_{\gothp_i,j} & \textrm{if }\gothp_i \neq \gothp,
\\ p^{-1}x\tilde \scrH'^{(1)} _{\gothp,j}& \textrm{if }\gothp_i = \gothp.
\end{cases}
\]
We shall show that $\tilde M$ is a Dieudonn\'e submodule of $\tilde \calD(A)$, namely, $p\tilde M \subseteq V\tilde M \subseteq \tilde M$.
This is clear for the summands with $\gothp_i \neq \gothp$. At $\gothp$, in fact, we shall prove the following stronger statements.
\begin{itemize}
\item[(a)]
For each $j = 1, \dots, f$ and $l =1, \dots, e-1$, we have an inclusion $x \tilde  \scrH'^{(l+1)}_{\gothp,j}\subset \tilde  \scrH'^{(l)}_{\gothp,j}$ of colength $1$ (which in particular implies that $\tilde  \scrH'^{(l)}_{\gothp,j} \subset \tilde  \scrH'^{(l+1)}_{\gothp,j}$).
\item[(b)]
For each $j=1, \dots, f$, we have an inclusion $x^{1-e}V(\tilde \scrH'^{(1)}_{\gothp,j}) \subset \tilde \scrH'^{(e)}_{\gothp,j-1}$ of colength $1$, which in particular implies that $\tilde \scrH'^{(e)}_{\gothp,j-1} \subset p^{-1} V(\tilde \scrH'^{(1)}_{\gothp,j})$ (note that $x^e$ and $p$ defines the same ideal in $\calO_{F_\gothp,k}$).
\end{itemize}
Provided (a) and (b), we deduce the following inclusion
\[
V \tilde M_{\gothp,j} = V(p^{-1}x \tilde \scrH'^{(1)}_{\gothp,j}) \subset  \tilde \scrH'^{(e)}_{\gothp,j-1} \subset x^{-1}  \tilde \scrH'^{(e-1)} _{\gothp,j-1}\subset \cdots \subset x^{1-e}  \tilde \scrH'^{(1)}_{\gothp,j-1}= \tilde M_{\gothp,j-1},
\]
which
has total colength $e$ (accumulating $1$ from each inclusion). So in particular, $p \tilde M_{\gothp,j-1}$ is also contained in $V\tilde M_{\gothp,j}$.
We now check (a) and (b). For (a), we first note that the  multiplication by $x$ map: $\cdot x: \tilde \scrH_{\gothp, j}^{(l+1)} \to \tilde \scrH_{\gothp,j}^{(l)}$ is injective and its reduction modulo $x$ is exactly the partial Hasse invariant $\Ha_{\tau_{\gothp,j}^{l+1}}: \scrH_{\gothp, j}^{(l+1)} \to \scrH_{\gothp,j}^{(l)}$. The fact $\Ha_{\tau_{\gothp,j}^{l+1}} (M_{\tau_{\gothp,j}^{l+1}}) \subseteq M_ {\tau_{\gothp,j}^{l}}$ from (i) immediately implies that $\cdot x$ sends $\tilde \scrH'^{(l+1)}_{\gothp,j}$ into $\tilde \scrH'^{(l)}_{\gothp,j}$, where the cokernel can be easily computed to have length $1$.
To see (b), we first claim that the map $x^{1-e}V$ takes $\tilde \scrH^{(1)}_{\gothp,j}$ into $\tilde \scrH^{(e)}_{\gothp,j-1}$. But this is clear, as \[
x^{1-e}V(\tilde \scrH^{(1)}_{\gothp,j})  = 
x^{1-e}V(x^{-1}p\tilde \calD(A)_{\gothp,j}) = \tilde \omega_{A, \gothp,j-1},
\]
and it is an $\calO_{F_\gothp,k}$-submodule of $\tilde \scrH^{(e)}_{\gothp,j-1}$ of colength $1$.
Moreover, inspecting the construction of $\mathrm{Hasse}_{\varpi,j}$ recalled in \S\ref{S:partial Hasse invariants}, we see that the reduction of the map $x^{1-e}V$ modulo $x$ is exactly $\mathrm{Hasse}_{\varpi,j}$.
The condition (i) $\mathrm{Hasse}_{\varpi,j}(M_{\tau_{\gothp,j}^1}) \subseteq M_{\tau_{\gothp,j-1}^{e}}$ in turn implies that $x^{1-e}V$ takes $\tilde \scrH'^{(1)}_{\gothp,j}$ into $ \tilde \scrH'^{(e)}_{\gothp,j-1}$, and the cokernel of this map is one-dimensional over $k$. This completes checking (a) and (b).

Now by standard Dieudonn\'e theory, the inclusions $p\tilde M \subseteq V\tilde M \subseteq \tilde M$ we just proved above implies that there exists an abelian variety $A'$ together with $\calO_F$-equivariant isogenies
$\phi:A \to A'$ such that the induced map on Dieudonn\'e modules $\phi^*: \tilde \calD(A') \to \tilde \calD(A)$ can be identified with the natural inclusion $\tilde M \subseteq \tilde \calD(A)$.
By construction $\tilde \calD(A) / \tilde M$ is a free module of rank one over $\FF_\gothp \otimes_{\FF_p} k$ as opposed to having rank one over $\FF_\gothp[x]/(x^e) \otimes_{\FF_p}k$. (Note that the latter condition would give rise to an isogeny such that $\phi\circ \psi$ is multiplication by $p$, but not ``multiplication by the ideal $\gothp$".) We see that there exists a (dual) isogeny $\psi: A' \to A$ and a polarization $\lambda': A'^\vee \to A'  \otimes_{\calO_F} \gothc'$ satisfying condition (3)(a)--(d) in \S\ref{S:setup section 3}.
The tame level structure $i$ on $A$ naturally propagates to $A'$. So it suffices to define a filtration $\underline \scrF'$ on $\omega_{A'}$ satisfying (3)(e) of \S\ref{S:setup section 3} and check that the point defined lies on $\dot Y_{\ttS, \ttS^c}$.
If $\gothp_i \neq \gothp$, $\phi^*$ and $\psi^*$ induce isomorphisms between $H^1_\dR(A)_{\gothp_i,j}$ and $H^1_\dR(A')_{\gothp_i,j}$; this forces $\scrF'^{(l)}_{\gothp_i,j} = \phi^*(\scrF^{(l)}_{\gothp_i,j})$.
We now consider the case  at $\gothp$. For $j =1, \dots, f$ and $l = 1, \dots, e-1$, we set 
\[
\scrF'^{(l)}_{\gothp,j}: = x\tilde \scrH'^{(l+1)}_{{\gothp,j}} / p\tilde \calD(A')_{\gothp,j}\quad \textrm{and} \quad \scrF'^{(e)}_{\gothp,j}: = V\tilde \calD(A')_{\gothp,j+1} / p\tilde \calD(A')_{\gothp,j} = \omega_{A', \gothp,j};
\]
they are subspaces of $H^1_\dR(A')_{\gothp,j}$.
Note that by (a) above, we have an inclusion $\scrF'^{(l)}_{{\gothp,j}} \subseteq \scrF'^{(l+1)}_{\gothp,j}$  of colength $1$, for $l = 1, \dots, e-1$.
Similarly, by (b) above, we have an inclusion
\[
x \tilde \scrH'^{(e)}_{\gothp,j} \subset xp^{-1} V(\tilde \scrH'^{(1)}_{\gothp,j}) = xp^{-1} V(px^{-1} \tilde M_{\gothp,j}) = V\tilde \calD(A')_{\gothp,j+1},
\]
which has colength $1$. In other words, we have an inclusion $\scrF'^{(e-1)}_{\gothp,j} \subset \scrF'^{(e)}_{\gothp,j}$ of colength $1$.
All these imply that $\scrF'^{(l)}_{\gothp,j}$'s define the needed filtration $\underline \scrF$ on $\omega_{A'}$.
Analogous to the situation on $A$, for $j=1, \dots, f$ and $l = 1, \dots, e$, we set $\dot \omega'_{\tau^l_{\gothp,j}}: = \scrF'^{(l)}_{\gothp,j} / \scrF'^{(l-1)}_{\gothp,j}$.

Finally, we check that the point we constructed belongs to $\dot Y_{\ttS, \ttS^c}$, For this, it is enough to check the following.
\begin{itemize}
\item For $\tau \in \ttS^c$,  $\dot \omega_\tau = \Ker(\psi^*_\tau) = \mathrm{Im}(\phi^*_\tau)$, which follows from the construction and condition (ii).
\item
For $\tau \in \ttS$, $\dot \omega'_\tau = \Ker(\phi^*_\tau)$, which is equivalent to $\Ha_{\theta\tau} \circ \phi^*_{\theta\tau} = 0$, which further follows from the construction and condition (iii).
\end{itemize}

This concludes the verification of the Claim and hence completes the proof of the Proposition.
\end{proof}

\begin{corollary}
\label{C:property of pi1}
Let $\ttT$ be a subset of $\Sigma_\gothp$ with $t = \#\ttT$.
\begin{enumerate}
\item
Over the open stratum $\dot X_\ttT^\circ$, the fiber dimension of $\pi_1$ is less than or equal to $t$.
The equality holds only when $\ttT$ is \emph{sparse} (this is called \emph{spaced} in \cite{goren-kassaei}), namely, $\tau$ and $\theta \tau$ does not belong to $\ttT$ simultaneously for any $\tau \in \Sigma_\gothp$.

\item
Suppose that $\ttT$ is a sparse subset of $\Sigma_\gothp$, then the $t$-dimensional fibers of $\pi_1^{-1} (\dot X_\ttT)$ are the (not necessarily disjoint) union of $\dot Y_{\ttS, \ttS^c}$ such that $\ttT = \theta(\ttS) \backslash \ttS$.
\item
We write $\ttS_{\underline \lambda}$ for the subset $\ttS$ given in \eqref{E:ttS} as determined by $\ttT$ and a tuple $\underline \lambda = (\lambda_1, \dots, \lambda_t)$. Then for any two tuples $\underline \lambda, \underline \lambda'$, if $|\lambda'_i - \lambda_i| \geq 2$ for some $i \in \{1, \dots, r\}$, then
$\pi_1(\dot Y_{\ttS_{\underline \lambda}, \ttS^c_{\underline \lambda}} \cap \dot Y_{\ttS_{\underline \lambda'}, \ttS^c_{\underline \lambda'}})$ is disjoint from the open stratum $\dot   X_\ttT^\circ$.
\item
Let $\underline \lambda$ and $\lambda'$ be two tuples such that $\lambda'_i = \lambda_i$ for $i \neq i_0$ and $\lambda'_{i_0} = \lambda_{i_0} +1$. Then the ideal sheaf defined by the inclusion $\dot Y_{\ttS_{\underline \lambda}, \ttS^c_{\underline \lambda}} \cap \dot Y_{\ttS_{\underline \lambda'}, \ttS^c_{\underline \lambda'}} \hookrightarrow \dot Y_{\ttS_{\underline \lambda}, \ttS^c_{\underline \lambda}}$ is
\[
\dot \omega_{\calA', \tau_{a_i-\lambda'_i}}  \otimes \dot \omega_{\calA, \tau_{a_i-\lambda'_i}}^{-1}.
\]
\end{enumerate}
\end{corollary}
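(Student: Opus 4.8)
The plan is to derive all four parts from Proposition~\ref{P:geometry of HMV(p)} (in particular from the Claim established in its proof), the combinatorics of \ref{S:combinatorics}, and the \'etale-local model of $\calM(\gothp)_\FF$ recalled in Proposition~\ref{P:LCI} and Corollary~\ref{C:strata iwahori}. For (1) and (2): since the $Y_{\ttS,\ttS^c}$ ($\ttS\subseteq\Sigma_\gothp$) are the irreducible components of $\calM(\gothp)_\FF$, one has $\pi_1^{-1}(X_\ttT)=\bigcup_\ttS\bigl(Y_{\ttS,\ttS^c}\cap\pi_1^{-1}(X_\ttT)\bigr)$. By Proposition~\ref{P:geometry of HMV(p)}, $\pi_1|_{Y_{\ttS,\ttS^c}}$ factors as $Y_{\ttS,\ttS^c}\to Y'_{\ttS,\ttS^c}\to X_{\ttT_\ttS}$ with $\ttT_\ttS:=\theta(\ttS)\setminus\ttS$, the first map a Frobenius factor (hence finite) and the second a $(\PP^1)^{\#\ttT_\ttS}$-bundle, so every fibre of $\pi_1|_{Y_{\ttS,\ttS^c}}$ has dimension exactly $\#\ttT_\ttS$; and $Y_{\ttS,\ttS^c}$ meets $\pi_1^{-1}(X_\ttT^\circ)$ iff $X_{\ttT_\ttS}$ meets $X_\ttT^\circ$ iff $\ttT_\ttS\subseteq\ttT$ (using $X_{\ttT_1}\cap X_{\ttT_2}^\circ\neq\emptyset\iff\ttT_1\subseteq\ttT_2$). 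Hence over $X_\ttT^\circ$ the fibre dimension is $\max\{\#\ttT_\ttS:\ttT_\ttS\subseteq\ttT\}\le\#\ttT=t$, proving the first assertion of (1); equality forces some $\ttS$ with $\theta(\ttS)\setminus\ttS=\ttT$, and since $\theta$ is a bijection, $\tau,\theta\tau\in\ttT$ would give $\theta^{-1}\tau\in\ttS$ and $\tau\in\ttS$ contradicting $\tau\notin\ttS$, so such $\ttS$ exists precisely when $\ttT$ is sparse, in which case $\ttS=\{\tau_{a_1-1},\dots,\tau_{a_t-1}\}$ works. This proves the rest of (1), and assembling the contributions of maximal fibre dimension gives (2).

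For (3), write $\ttS:=\ttS_{\underline\lambda}$ and $\ttS':=\ttS_{\underline\lambda'}$, both having $\theta(\ttS)\setminus\ttS=\theta(\ttS')\setminus\ttS'=\ttT$. Suppose for contradiction that a geometric point $y$ of $Y_{\ttS,\ttS^c}\cap Y_{\ttS',\ttS'^c}$ has $x:=\pi_1(y)\in X_\ttT^\circ$, and let $\{M_\tau=\phi_\tau^*(\scrH'_\tau)\subseteq\scrH_\tau\}_{\tau\in\Sigma_\gothp}$ be the subspaces attached to $y$. Applying the Claim in the proof of Proposition~\ref{P:geometry of HMV(p)} to $y\in Y_{\ttS,\ttS^c}\cap\pi_1^{-1}(x)$ and separately to $y\in Y_{\ttS',\ttS'^c}\cap\pi_1^{-1}(x)$ produces the two sets of constraints (ii)--(iii), for $\ttS$ and for $\ttS'$. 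Assuming without loss of generality $\lambda'_{i_0}-\lambda_{i_0}\geq 2$, choose $m$ with $\lambda_{i_0}<m<\lambda'_{i_0}$ and put $\tau_0:=\tau_{a_{i_0}-m}$; the spacing conditions on the $a_i$ together with the bounds on $\lambda_i,\lambda'_i$ force $a_{i_0-1}<a_{i_0}-m<a_{i_0}$, whence $\tau_0\notin\ttT$, $\tau_0\in\ttS^c_{\underline\lambda}$ (it lies strictly before block $i_0$ of $\ttS_{\underline\lambda}$ and outside all other blocks), and $\tau_0\in\theta(\ttS_{\underline\lambda'})$ (as $a_{i_0}-\lambda'_{i_0}+1\le a_{i_0}-m\le a_{i_0}$). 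Then (ii) for $\ttS$ gives $M_{\tau_0}=\dot\omega_{A,\tau_0}$ and (iii) for $\ttS'$ gives $M_{\tau_0}=\Ker(\Ha_{\tau_0})$, so $\dot\omega_{A,\tau_0}=\Ker(\Ha_{\tau_0})$, i.e.\ the partial Hasse invariant $h_{\tau_0}$ vanishes at $x$; since $\tau_0\notin\ttT$ this contradicts $x\in X_\ttT^\circ$. Hence $\pi_1(Y_{\ttS_{\underline\lambda},\ttS^c_{\underline\lambda}}\cap Y_{\ttS_{\underline\lambda'},\ttS^c_{\underline\lambda'}})$ is disjoint from $X_\ttT^\circ$.

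For (4), now $\ttS_{\underline\lambda'}=\ttS_{\underline\lambda}\sqcup\{\tau_0\}$ with $\tau_0:=\tau_{a_{i_0}-\lambda'_{i_0}}=\tau_{a_{i_0}-\lambda_{i_0}-1}\in\ttS^c_{\underline\lambda}$. By the \'etale-local description of $\calM(\gothp)_\FF$ in the proof of Proposition~\ref{P:LCI} and its refinement in Corollary~\ref{C:strata iwahori}(2), near a point of the intersection $Y_{\ttS_{\underline\lambda},\ttS^c_{\underline\lambda}}$ is cut out by $\{U_\tau=0:\tau\in\ttS_{\underline\lambda}\}\cup\{V_\tau=0:\tau\in\ttS^c_{\underline\lambda}\}$, where $U_\tau$ (resp.\ $V_\tau$) is the local coordinate realizing the subquotient map $\phi^*_\tau$ (resp.\ $\psi^*_\tau$); in particular on $Y_{\ttS_{\underline\lambda},\ttS^c_{\underline\lambda}}$ the map $\phi^*_{\tau_0}:\dot\omega_{\calA',\tau_0}\to\dot\omega_{\calA,\tau_0}$ is a well-defined section of $\calHom(\dot\omega_{\calA',\tau_0},\dot\omega_{\calA,\tau_0})=\dot\omega_{\calA',\tau_0}^{-1}\otimes\dot\omega_{\calA,\tau_0}$, and the further condition defining $Y_{\ttS_{\underline\lambda'},\ttS^c_{\underline\lambda'}}$ inside $Y_{\ttS_{\underline\lambda},\ttS^c_{\underline\lambda}}$ is exactly $\phi^*_{\tau_0}=0$. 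By Corollary~\ref{C:strata iwahori}(2) this intersection has dimension $g-1$, so $\phi^*_{\tau_0}$ is a nonzerodivisor cutting out a divisor, and therefore the ideal sheaf of the inclusion is the inverse line bundle $\calHom(\dot\omega_{\calA,\tau_0},\dot\omega_{\calA',\tau_0})=\dot\omega_{\calA',\tau_0}\otimes\dot\omega_{\calA,\tau_0}^{-1}$, which is the assertion (with $i=i_0$).

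The conceptual content --- the factorization of $\pi_1$ through $\PP^1$-bundles via Frobenius factors and the moduli-theoretic Claim --- is already supplied by Proposition~\ref{P:geometry of HMV(p)}, so no genuinely new idea is needed. The main obstacle is purely combinatorial bookkeeping: in (3) one must correctly locate the witness place $\tau_0$ in the ``gap'' between $\lambda_{i_0}$ and $\lambda'_{i_0}$ and verify its membership relations against $\ttT$, $\ttS_{\underline\lambda}$, and $\theta(\ttS_{\underline\lambda'})$ while tracking the cyclic indexing of $\Sigma_\gothp$ and the constraints $a_{i+1}-a_i\geq 2$; and in (4) one must match the abstract local coordinate $U_{\tau_0}$ with the intrinsic subquotient map $\phi^*_{\tau_0}$ in order to read off the correct line bundle. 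These are the points where index and sign errors are easiest to make.
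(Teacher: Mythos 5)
Your proof is correct and follows essentially the same route as the paper for parts (1), (2) and (4): the fibre dimensions are read off from the factorization of $\pi_1|_{Y_{\ttS,\ttS^c}}$ through the $(\PP^1)^{\#(\theta(\ttS)\backslash\ttS)}$-bundle of Proposition~\ref{P:geometry of HMV(p)}, and the intersection in (4) is identified as the vanishing divisor of the section $\phi^*_{\tau_{a_{i_0}-\lambda'_{i_0}}}$ of $\dot\omega_{\calA',\tau_{a_{i_0}-\lambda'_{i_0}}}^{-1}\otimes\dot\omega_{\calA,\tau_{a_{i_0}-\lambda'_{i_0}}}$. For (3) the paper argues by pure stratification combinatorics: the intersection is contained in the larger closed stratum $Y_{\ttS',\ttS'^c}$ with $\ttS'=\ttS_{\underline\lambda}\cup\{\tau_{a_i-\lambda_i-2}\}$, whose image under $\pi_1$ is contained in $X_{\ttT\cup\{\tau_{a_i-\lambda_i-1}\}}$ and hence misses $X_\ttT^\circ$; your pointwise argument via conditions (ii)--(iii) of the Claim detects the forced vanishing of the same extra partial Hasse invariant $h_{\tau_{a_{i_0}-m}}$ in the gap, so the two arguments are equivalent in content, yours making the moduli-theoretic mechanism explicit at the cost of slightly more index bookkeeping.
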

\begin{proof}
(1)
By Proposition~\ref{P:geometry of HMV(p)}, each irreducible component $\dot Y_{\ttS, \ttS^c}$ is a fiber bundle of pure dimension $\#(\theta(\ttS)\backslash \ttS)$ over $\dot X_{\theta(\ttS)\backslash \ttS}$. The base is the union of all open strata $\dot X_{\ttT'}^\circ$ with $\ttT' \supseteq \theta(\ttS)\backslash \ttS$. In particular, the codimension of $\dot X_{\ttT'}^\circ$ is $\#\ttT'$ which is greater than or equal to the fiber dimension $\#\theta(\ttS)\backslash \ttS$ and the equality holds exactly when $\ttT' = \theta(\ttS)\backslash \ttS$ (which implies that $\ttT'$ is sparse). Since this holds true for all irreducible components, (1) is clear.

(2) This is an immediate corollary of (1).

(3) We know that $\dot Y_{\ttS_{\underline \lambda}, \ttS^c_{\underline \lambda}} \cap \dot Y_{\ttS_{\underline \lambda'}, \ttS^c_{\underline \lambda'}} = \dot Y_{\ttS_{\underline \lambda}\cup \ttS_{\underline \lambda'}, \ttS^c_{\underline \lambda}\cup \ttS^c_{\underline \lambda'}}.$ by Lemma~\ref{C:strata iwahori}(2). If $\lambda'_i - \lambda_i \geq 2$ for some $i$, then this intersection is contained in $\dot Y_{\ttS', \ttS'^c}$ for 
\[
\ttS' = \ttS_{\lambda} \cup \{ \tau_{a_i- \lambda_i-2}\}.
\]
But then $\theta(\ttS') \backslash \ttS' = \ttT\cup \{\tau_{a_i- \lambda_i-1}\}$. So we have
\[
\pi_1(\dot Y_{\ttS_{\underline \lambda}, \ttS^c_{\underline \lambda}} \cap\dot  Y_{\ttS_{\underline \lambda'}, \ttS^c_{\underline \lambda'}}) = \pi_1( \dot Y_{\ttS_{\underline \lambda}\cup \ttS_{\underline \lambda'}, \ttS^c_{\underline \lambda}\cup \ttS^c_{\underline \lambda'}}) \subseteq \pi_1(\dot Y_{\ttS', \ttS'^c}) \subseteq\dot  X_{\ttT \cup \{\tau_{a_i-\lambda_i-1}\}}.
\]
In particular, $\pi_1(\dot Y_{\ttS_{\underline \lambda}, \ttS^c_{\underline \lambda}} \cap \dot Y_{\ttS_{\underline \lambda'}, \ttS^c_{\underline \lambda'}})$ does not intersect with $\dot X_\ttT^\circ$.

(4) Note that the condition implies that $\ttS_{\underline \lambda'} = \ttS_{\underline \lambda} \cup \{\tau_{a_i - \lambda'_i}\}$.
So $\dot Y_{\ttS_{\underline \lambda}, \ttS^c_{\underline \lambda}} \cap \dot Y_{\ttS_{\underline \lambda'}, \ttS^c_{\underline \lambda'}} = \dot Y_{\ttS_{\underline \lambda'}, \ttS^c_{\underline \lambda}}$ is a closed subscheme in $\dot Y_{\ttS_{\underline \lambda}, \ttS^c_{\underline \lambda}}$ defined as the vanishing locus of
\[
\phi_{\tau_{a_i-\lambda'_i}}^*: \dot \omega_{\calA', \tau_{a_i-\lambda'_i}} \longrightarrow \dot \omega_{\calA, \tau_{a_i-\lambda'_i}}.
\]
The statement of (4) follows, because the ideal sheaf of the vanishing locus of a section of a line bundle is the inverse line bundle.
\end{proof}

Before proceeding, we need some additional geometric information regarding the map $\pi_1: \dot Y_{\ttS, \ttS^c} \to \dot X_\ttT$.

\begin{proposition}
\label{P:description of line bundles}
Let $\ttS$ be a subset of $\Sigma_p$ and put $\ttT: = \theta(\ttS) \backslash \ttS$.
\begin{enumerate}
\item
We have the following isomorphisms of line bundles on $\dot Y_{\ttS, \ttS^c}$:
\begin{equation}
\label{E:pull back line bundle description 1}
\textrm{if }\tau, \theta^{-1}\tau \in \ttS, \quad \dot\omega_{\calA', \tau} \cong 
\begin{cases}
\dot\omega^{\otimes p}_{\calA, \theta^{-1}\tau} & \textrm{if }\tau = \tau_a \textrm{ with }a \equiv 1 \bmod e,
\\
\dot\omega_{\calA, \theta^{-1}\tau} & \textrm{otherwise},
\end{cases}
\end{equation}
\begin{equation}
\label{E:pull back line bundle description 2}
\textrm{if }\tau, \theta^{-1}\tau \in \ttS^c, \quad \dot\omega_{\calA, \tau} \cong 
\begin{cases}
\dot\omega^{\otimes p}_{\calA', \theta^{-1}\tau} & \textrm{if }\tau = \tau_a \textrm{ with }a \equiv 1 \bmod e,
\\
\dot\omega_{\calA', \theta^{-1}\tau} & \textrm{otherwise}.
\end{cases}
\end{equation}
\item
For each $\tau \in \ttS \backslash \theta(\ttS)$, we write 
$\calO_\tau(-1)$ for the canonical sub line bundle on $\dot Y'_{\ttS, \ttS^c}$ at the $\PP^1$-factor indexed by $\tau$. Then we have
\[
\dot\omega_{\calA',\tau_{a_i-\lambda_i}} \cong g^* \calO_{\tau_{a_i-\lambda_i}}(1), \quad \textrm{and} \quad g^* \calO_{\tau_{a_i-\lambda_i}}(-1) \cong \begin{cases}
\dot \omega_{\calA',\tau_{a_i-\lambda_i-1}}^{\otimes p} & \textrm{if }a_i- \lambda_i\equiv 1 \bmod e
\\
\dot \omega_{\calA',\tau_{a_i-\lambda_i-1}} & \textrm{otherwise}.
\end{cases}.
\]
\end{enumerate}
\end{proposition}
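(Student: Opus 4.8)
The plan is to deduce every isomorphism in the statement from two inputs: the structural description of $Y_{\ttS, \ttS^c}$, together with the triviality of the line bundles $\wedge^2 \calH_\tau \cong \dot\varepsilon_{\calA, \tau}$ and $\wedge^2 \calH'_\tau \cong \dot\varepsilon_{\calA', \tau}$ (these are pulled back, along $\pi_1$ and $\pi_2$ respectively, from the trivial bundle $\dot\varepsilon_\tau$ on $\calM$); and the commutative squares of \ref{S:recall M(p)} relating $\phi^*$, $\psi^*$ and the partial Hasse maps $\Ha_\tau$, $\Ha'_\tau$. First I would record several facts on $Y_{\ttS, \ttS^c}$, which is integral, being an irreducible component of $\calM(\gothp)_\FF$ and smooth over $\FF$ by Corollary~\ref{C:strata iwahori}. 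For $\tau \in \ttS$ the map $\dot\omega_{\calA', \tau} \to \dot\omega_{\calA, \tau}$ induced by $\phi^*$ vanishes by definition of $Y_{\ttS, \ttS^c}$, and since $\phi^*$ preserves Hodge filtrations this gives $\dot\omega_{\calA', \tau} \subseteq \Ker(\phi^*_\tau)$; as both are rank-one subbundles of the rank-two bundle $\calH'_\tau$ over an integral scheme, they are equal, and $\Ker(\phi^*_\tau) = \mathrm{Im}(\psi^*_\tau)$ by \ref{S:recall M(p)}. Hence $\mathrm{Im}(\phi^*_\tau) \cong \calH'_\tau / \dot\omega_{\calA', \tau} \cong \dot\omega_{\calA', \tau}^{-1}$, using that $\wedge^2\calH'_\tau$ is trivial; symmetrically, for $\tau \in \ttS^c$ one gets $\dot\omega_{\calA, \tau} = \Ker(\psi^*_\tau) = \mathrm{Im}(\phi^*_\tau)$ and $\mathrm{Im}(\psi^*_\tau) \cong \dot\omega_{\calA, \tau}^{-1}$. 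Finally, since by \ref{S:recall M(p)} $\Ha_\tau$ is a surjection of $\calH_\tau$ onto $\dot\omega_{\calA, \theta^{-1}\tau}$ — onto $\dot\omega_{\calA, \theta^{-1}\tau}^{\otimes p}$ in the Frobenius-twisted case $\tau = \tau_a$ with $a \equiv 1 \bmod e$, where $\Ha_\tau = \mathrm{Hasse}_{\varpi,j}$ — its kernel is a rank-one subbundle with $\Ker(\Ha_\tau) \cong \wedge^2\calH_\tau \otimes (\mathrm{Im}\,\Ha_\tau)^{-1} \cong \dot\omega_{\calA, \theta^{-1}\tau}^{-1}$ (respectively $\dot\omega_{\calA, \theta^{-1}\tau}^{\otimes -p}$); likewise for $\Ha'_\tau$.

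For Part (1), take $\tau, \theta^{-1}\tau \in \ttS$. I would chase the identity $\Ha_\tau \circ \phi^*_\tau = \phi^*_{\theta^{-1}\tau} \circ \Ha'_\tau$ from \ref{S:recall M(p)} (Frobenius-twisted when $\tau = \tau_a$, $a \equiv 1 \bmod e$): its right-hand side is identically zero, because $\Ha'_\tau$ has image $\dot\omega_{\calA', \theta^{-1}\tau}$ (or its $p$-th power) and $\phi^*_{\theta^{-1}\tau}$ annihilates $\dot\omega_{\calA', \theta^{-1}\tau}$ precisely because $\theta^{-1}\tau \in \ttS$. Hence $\mathrm{Im}(\phi^*_\tau) \subseteq \Ker(\Ha_\tau)$; both are rank-one subbundles of $\calH_\tau$ over the integral scheme $Y_{\ttS, \ttS^c}$, so they coincide, and comparing the identifications above yields $\dot\omega_{\calA', \tau}^{-1} \cong \dot\omega_{\calA, \theta^{-1}\tau}^{-1}$ (respectively $\dot\omega_{\calA, \theta^{-1}\tau}^{\otimes -p}$), which is exactly \eqref{E:pull back line bundle description 1}. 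Running the identical argument with $(\calA, \phi^*, \ttS)$ replaced by $(\calA', \psi^*, \ttS^c)$ produces \eqref{E:pull back line bundle description 2} for $\tau, \theta^{-1}\tau \in \ttS^c$. (The extremal cases $\ttS = \emptyset$ and $\ttS = \Sigma_\gothp$ are also covered, $\pi_1$ being the Frobenius map, respectively an isomorphism, by Proposition~\ref{P:geometry of HMV(p)}.)

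For Part (2), fix $\tau = \tau_{a_i - \lambda_i} \in \ttS \setminus \theta(\ttS)$, so that $\tau \in \ttS$ while $\theta^{-1}\tau \in \ttS^c$. By the construction of $g$ in the proof of Proposition~\ref{P:geometry of HMV(p)}, $g^*\calO_\tau(-1)$ equals, as a sub-line-bundle of $\pi_1^*\calH_\tau$, the image $\phi^*_\tau(\calH'_\tau) = \mathrm{Im}(\phi^*_\tau)$, which by the facts above is $\cong \dot\omega_{\calA', \tau}^{-1}$; so $g^*\calO_\tau(1) \cong \dot\omega_{\calA', \tau}$, the first assertion. For the second, I would chase the identity $\Ha'_\tau \circ \psi^*_\tau = \psi^*_{\theta^{-1}\tau} \circ \Ha_\tau$ (suitably twisted): its right-hand side vanishes because $\psi^*_{\theta^{-1}\tau}$ kills $\dot\omega_{\calA, \theta^{-1}\tau}$, hence kills $\mathrm{Im}\,\Ha_\tau$, now using $\theta^{-1}\tau \in \ttS^c$. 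Thus $\mathrm{Im}(\psi^*_\tau) \subseteq \Ker(\Ha'_\tau)$; but $\mathrm{Im}(\psi^*_\tau) = \dot\omega_{\calA', \tau}$ since $\tau \in \ttS$, so $\dot\omega_{\calA', \tau} = \Ker(\Ha'_\tau) \cong \dot\omega_{\calA', \theta^{-1}\tau}^{-1}$ (respectively $\dot\omega_{\calA', \theta^{-1}\tau}^{\otimes -p}$). Combining with the first assertion gives $g^*\calO_\tau(-1) \cong \dot\omega_{\calA', \tau}^{-1} \cong \dot\omega_{\calA', \theta^{-1}\tau}$ (respectively $\dot\omega_{\calA', \theta^{-1}\tau}^{\otimes p}$), as required.

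The real content is the single observation that $\Ha_\tau \circ \phi^*_\tau$ (and its analogues) vanishes — which forces $\mathrm{Im}(\phi^*_\tau)$ to be the kernel of a Hasse map — so there is essentially no computation involved. The step that will demand care is the bookkeeping: correctly tracking all the Frobenius twists (equivalently, whether each Hasse map in play is $m^{(l)}_{\varpi, j}$ or $\mathrm{Hasse}_{\varpi, j}$) and confirming that every image and kernel appearing is a genuine subbundle over $\calM_\FF$, so that the repeated use of ``a rank-one subbundle inside a rank-one subbundle over an integral scheme coincides with it'' is legitimate; this last point is precisely what \ref{S:recall M(p)} supplies.
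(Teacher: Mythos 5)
Your argument is correct and follows essentially the same route as the paper's: the key point in both is that the defining conditions of $Y_{\ttS,\ttS^c}$ force the compositions $\Ha_\tau\circ\phi_\tau^*$ (resp.\ $\Ha'_\tau\circ\psi_\tau^*$) to vanish, whence equalities of rank-one subbundles of $\calH_\tau$ and $\calH'_\tau$, converted into isomorphisms of the $\dot\omega$'s via the triviality of $\wedge^2\calH_\tau$ and $\wedge^2\calH'_\tau$. The only cosmetic difference is that in part (1) the paper identifies the two line bundles directly as quotients of $\scrH'_\tau$ by a common kernel, whereas you pass uniformly through inverses using the $\wedge^2$-trivialization (which is exactly the paper's device in part (2)); your Frobenius-twist bookkeeping is also consistent with the paper's.
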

\begin{proof}
(1) We shall prove \eqref{E:pull back line bundle description 2} and the proof of \eqref{E:pull back line bundle description 1} is similar. So we assume that $\tau, \theta^{-1}\tau \in \ttS^c$.
For simplicity, we assume that $\tau = \tau_a$ with $a \equiv 1 \mod e$; the argument for the other case is similar by loosing all the Frobenius twists in the proof (and hence getting $\dot \omega_{\calA', \theta^{-1}\tau}$ as opposed to  $\dot \omega_{\calA', \theta^{-1}\tau}^{\otimes p}$ on the right hand side of \eqref{E:pull back line bundle description 2}).
Take an $S$-point of $\dot Y_{\ttS, \ttS^c}$; we look at the commutative diagram \eqref{E:phi psi hasse} which we copy to below
\begin{equation}
\label{E:phi psi Hasse copy}
\xymatrix@C=40pt{
\scrH_\tau \ar[r]^{\psi_\tau^*} \ar[d]^{\Ha_\tau} &
\scrH'_\tau \ar[r]^{\phi_\tau^*} \ar[d]^{\Ha'_\tau} &
\scrH_\tau \ar[d]^{\Ha_\tau}
\\
(\scrH_{\theta^{-1}\tau})^{(p)} \ar[r]^{\psi_{\theta^{-1}\tau}^*} &
(\scrH'_{\theta^{-1}\tau})^{(p)} \ar[r]^{\phi_{\theta^{-1}\tau}^*} &
(\scrH_{\theta^{-1}\tau}
)^{(p)}.
}
\end{equation}
Since $\tau, \theta^{-1}\tau \in \ttS^c$, we have
\[
\dot \omega_{A,\tau} \cong \mathrm{Ker}(\psi_{\tau}^*: \scrH_\tau \to  \scrH'_\tau ) =  \mathrm{Im}(\phi_{\tau}^*:  \scrH'_\tau \to  \scrH_\tau ).
\]
Note that $\dot \omega_{A', \theta^{-1}\tau}^{\otimes p}$ is also the image of $\scrH'_{\tau}$ but under the map $\Ha'_\tau$.
To prove the desired isomorphism, it suffices to show that
\begin{equation}
\label{E:ker V = im psi}
\mathrm{Ker}(\Ha'_{\tau}) = \mathrm{Ker}(\phi_\tau^*) = \mathrm{Im}(\psi_\tau^*).
\end{equation}
Since both sides are subbundle of $\scrH'_\tau $ of rank one, it suffices to show that $\Ha'_ \tau \circ \psi_\tau^* = 0$, which is equivalent to show that $\psi^*_{\theta^{-1}\tau}\circ \Ha_ \tau=0$.
But the image of $\Ha_\tau$ is exactly $\dot \omega_{A, \theta^{-1}\tau}^{\otimes p}$ which lies in the kernel of $\psi^*_{\theta^{-1}\tau}$ by the assumption $\theta^{-1}\tau \in \ttS^c$.
So we conclude \eqref{E:ker V = im psi} and hence prove (1).

(2) The first equality follows from the equality
\begin{align*}
\wedge^2\calH'_{\tau_{a_i-\lambda_i}} &\cong \mathrm{Ker} \phi_{\tau_{a_i-\lambda_i}}^* \otimes \mathrm{Im} \phi_{\tau_{a_i-\lambda_i}}^* \cong \dot \omega_{\calA',\tau_{a_i-\lambda_i}} \otimes g^*\calO_{\tau_{a_i-\lambda_i}}(-1),
\end{align*}
because the left hand side $\wedge^2\calH'_{\tau_{a_i-\lambda_i}}$ can be canonically trivialized over $\calM(\gothp)$ by \S\ref{S:recall M(p)}(3) and \cite[Lemma~2.5]{reduzzi-xiao} (through pulling back along $\pi_2$). 

For the second equality, we shall only prove it when $a_i-\lambda_i \equiv 1 \bmod e$; the other case is similar but without the additional Frobenius pullbacks.
We take an $S$-point of $\dot Y_{\ttS, \ttS^c}$ as above; we may look at \eqref{E:phi psi Hasse copy} for $\tau= \tau_{a_i-\lambda_i}$.
We note that $\dot \omega_{A',\tau_{a_i-\lambda_i-1}}^{\otimes p}$ is the image of $\scrH'_{\tau_{a_i-\lambda_i}}$ under $\Ha'_{ \tau_{a_i-\lambda_i}}$, and $g^*\calO_{\tau_{a_i-\lambda_i}}(-1)$ is the image of $\scrH'_{\tau_{a_i-\lambda_i}}$ under $\phi_{\tau_{a_i-\lambda_i}}^*$. So it suffices to prove that
\[
\mathrm{Ker}( \Ha'_{ \tau_{a_i-\lambda_i}}) \cong \mathrm{Ker}(\phi_{\tau_{a_i-\lambda_i}}^*) = \mathrm{Im} (\psi_{\tau_{a_i-\lambda_i}}^*).
\]
Similar to the argument in (1), for rank reasons, it suffices to show that 
\[
\Ha'_{ \tau_{a_i-\lambda_i}} \circ \psi_{\tau_{a_i-\lambda_i}}^*: \scrH_{\tau_{a_i-\lambda_i}} \longrightarrow (\scrH'_{\tau_{a_i-\lambda_i-1}})^{(p)}
\]
is the zero map.
But this follows from that $\psi_{\tau_{a_i-\lambda_i-1}}^*(\dot \omega_{A,\tau_{a_i-\lambda_i-1}}^{(p)}) =0$ because $\tau_{a_i-\lambda_i-1} \in \ttS^c$.
\end{proof}

The following corollary of Grothendieck's formal function theorem will reduce Proposition~\ref{P:ample=>small support} to a calculation at the component described in Corollary~\ref{C:property of pi1}(2).
\begin{proposition}
\label{P:formal function}
Let $h: X \to Y$ be a projective morphism between noetherian schemes, and let $t = \max\{ \dim X_y| y \in Y\}$. Let $\calF$ be a coherent sheaf on $X$.

(1) Then $R^ih_*(\calF) =0$ for all $i>t$.

(2) Suppose that $X$ is the union of two components $X_1 \cup X_2$, such that $\max\{ \dim X_{2, y} | y \in Y\} < t$. Then we have
\[
R^th_*(\calF) \cong R^th_*(\calF|_{X_1}).
\]

\end{proposition}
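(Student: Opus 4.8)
The plan is to derive both parts from Grothendieck's theorem on formal functions together with the vanishing theorem bounding the cohomological dimension of a noetherian topological space by its Krull dimension.

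\emph{Part (1).} Since $h$ is projective, hence proper, and $Y$ is noetherian, each $R^ih_*\calF$ is a coherent $\calO_Y$-module; as such it vanishes as soon as all of its completed stalks vanish, so it is enough to fix $y\in Y$ and show that the completed stalk of $R^ih_*\calF$ at $y$ is zero for $i>t$. By the theorem on formal functions this completed stalk is $\varprojlim_n H^i\bigl(X_n,\calF\otimes_{\calO_X}\calO_{X_n}\bigr)$, where $X_n$ denotes the $n$-th infinitesimal neighbourhood of the fibre $X_y$ inside $X$. Each $X_n$ has the same underlying topological space as $X_y$, which has Krull dimension $\le t$, and the cohomology of an abelian sheaf on a noetherian space of dimension $\le t$ vanishes in degrees $>t$; hence every term of the inverse system is zero for $i>t$, and therefore $R^ih_*\calF=0$.

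\emph{Part (2).} I would write $\iota_1\colon X_1\hookrightarrow X$ for the closed immersion and read $\calF|_{X_1}$ as the coherent sheaf $\iota_{1*}(\calF\otimes_{\calO_X}\calO_{X_1})$ on $X$; since $\iota_{1*}$ is exact, one has $R^\bullet h_*(\calF|_{X_1})=R^\bullet(h|_{X_1})_*(\calF\otimes\calO_{X_1})$, so the two readings of the right-hand side coincide. Because $\calO_X\twoheadrightarrow\calO_{X_1}$, the canonical map $\calF\to\calF|_{X_1}$ is surjective, with kernel $\mathcal{K}:=\calI_{X_1}\calF$, where $\calI_{X_1}$ is the ideal sheaf of $X_1$. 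Interpreting $X=X_1\cup X_2$ scheme-theoretically (for instance, $X$ reduced and $X_1,X_2$ its two component pieces, so that $\calI_{X_1}\cap\calI_{X_2}=0$), at any point $x\notin X_2$ the stalk $\calI_{X_2,x}$ is the unit ideal, whence $\calI_{X_1,x}=0$; thus $\mathcal{K}$ is supported inside $X_2$. I would then rerun the formal-function estimate of part (1) for the coherent sheaf $\mathcal{K}$: its topological fibres $\mathrm{Supp}(\mathcal{K})\cap X_y\subseteq X_{2,y}$ have dimension $\le t-1$ by hypothesis, which forces $R^ih_*\mathcal{K}=0$ for all $i\ge t$. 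Feeding the short exact sequence $0\to\mathcal{K}\to\calF\to\calF|_{X_1}\to0$ into the long exact sequence of $Rh_*$ and using $R^th_*\mathcal{K}=R^{t+1}h_*\mathcal{K}=0$ then yields the isomorphism $R^th_*\calF\xrightarrow{\ \cong\ }R^th_*(\calF|_{X_1})$.

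The step I expect to need the most care is this last vanishing $R^{\ge t}h_*\mathcal{K}=0$: the bound on fibre dimensions applies to $\mathrm{Supp}(\mathcal{K})$ and not to $X$, so the formal-function argument must be carried out for the sheaf $\mathcal{K}$ rather than for the morphism $h$ — equivalently, one may d\'evissage $\mathcal{K}$ into successive subquotients annihilated by $\calI_{X_2}$ and apply part (1) to $h|_{X_2}\colon X_2\to Y$. Beyond this, the argument is purely formal.
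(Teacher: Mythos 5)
Your proposal is correct and follows essentially the same route as the paper: part (1) via the theorem on formal functions together with Grothendieck vanishing on the (at most $t$-dimensional) fibres, and part (2) by feeding the kernel $\mathcal{K}$ of $\calF\to\calF|_{X_1}$ — supported on $X_2$, whose fibres have dimension $<t$ — into the long exact sequence for $Rh_*$. Your closing remark correctly identifies the one point the paper glosses over, namely that the vanishing $R^{\ge t}h_*\mathcal{K}=0$ requires applying the fibre-dimension bound to the support of $\mathcal{K}$ rather than to $X$ itself, and your d\'evissage fix is the right one.
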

\begin{proof}
(1) is a corollary of Grothendieck's formal function theorem; see  e.g. [Hartshorne, Cor 11.2].

(2) Write $i: X_1 \to X$ for the natural inclusion.
Let $\calG$ denote the kernel of the surjective morphism $\calF \to i_*\calF|_{X_1}$; then $\calG$ is supported on $X_2$.
By (1), $R^rh_*(\calG) = R^{r+1}h_*(\calG) = 0$. So we proved (2).
\end{proof}

\subsection{Proof of Proposition~\ref{P:ample=>small support}}
\label{S:proof of main prop}
We are now ready to prove Proposition~\ref{P:ample=>small support}.

By Corollary~\ref{C:property of pi1}(1), for every point $\dot x$ of $\calM$ of codimension $t$, $\dim \pi_1^{-1}(x) \leq t$ and the equality holds only when $\dot x$ is a generic point of $\dot X_\ttT^\circ$ for some sparse set $\ttT \subseteq \Sigma_\gothp$ with $t = \# \ttT$.
By  Proposition~\ref{P:formal function}(1), the stalk of $R^{\geq t}\pi_{1, *}\pi_2^*\dot \omega'^\kappa_\FF$ is zero at all other points (of codimension $t$).
So to prove Proposition~\ref{P:ample=>small support}, it suffices to show that, for each sparse set $\ttT\subseteq \Sigma_\gothp$ with $t = \#\ttT$, $R^t\pi_{1,*}\pi_2^*\dot \omega'^\kappa_{\FF}$ vanishes on every \emph{geometric} generic point $\eta_\ttT$ of $\dot X_\ttT^\circ$.
By Corollary~\ref{C:property of pi1}(2), the $t$-dimension fibers of $\dot X_\ttT$ are exactly those of $\dot Y_{\ttS, \ttS^c}$ for which $\theta(\ttS) \backslash \ttS= \ttT$.
Write $\dot Z_\ttT$ for the union of these $\dot Y_{\ttS, \ttS^c}$ (with the reduced scheme structure).
Using Proposition~\ref{P:formal function}(2), we see that
\begin{equation}
\label{E:isom at eta T}
\big(R^r\pi_{1,*}\pi_2^* \dot \omega'^\kappa_{\FF}\big)_{\eta_\ttT} \cong \big( R^r\pi_{1,*} (\pi_2^* \dot  \omega'^\kappa)|_{\dot Z_\ttT} \big)_{\eta_\ttT}.
\end{equation}
The proof of Proposition~\ref{P:ample=>small support} is then reduced to prove the vanishing of \eqref{E:isom at eta T} for each non-empty sparse set $\ttT \subseteq \Sigma_\gothp$.
Here and after, we shall frequently write $(-)_{\eta_\ttT}$ to indicate the base change to the point $\eta_\ttT$.
We shall  prove Proposition~\ref{P:ample=>small support} in the following two steps:
\begin{enumerate}
\item
Let $(\dot Y_{\ttS, \ttS^c})_{\eta_\ttT}^\mathrm{red}$ denote the reduced subscheme of $(\dot Y_{\ttS, \ttS^c})_{\eta_\ttT}$.
We shall show that the natural map $g^\mathrm{red}_{\eta_\ttT}: (\dot Y_{\ttS, \ttS^c})_{\eta_\ttT}^\mathrm{red} \to (\dot Y'_{\ttS, \ttS^c})_{\eta_\ttT} \cong (\PP^1)^t_{\eta_{\ttT}}$ defined below in \eqref{E:gred} is the $p$-Frobenius in the factor labeled by $\tau$ for which $\tau = \tau_a$ with $a \equiv 1 \bmod e$; so in particular, $(\dot Y_{\ttS, \ttS^c})_{\eta_\ttT}^\mathrm{red}$ itself is isomorphic to $(\PP^1)^t_{\eta_{\ttT}}$.
\item
We shall prove that
$(\pi_2^*\dot \omega'^\kappa)_{(\dot Z_\ttT)_{\eta_\ttT}}$ is a successive extension of line bundles $L_\ttS$ supported on each $(\dot Y_{\ttS, \ttS^c})_{\eta_\ttT}$, and $L_\ttS|_{(\dot Y_{\ttS, \ttS^c})_{\eta_\ttT}^\mathrm{red}}$ is the external tensor product of line bundles on $\PP^1_{\eta_\ttT}$ of the form $\calO(n)$ with $n \geq -1$ (assuming our conditions on weights in Proposition~\ref{P:ample=>small support}).
\end{enumerate}

We start with (1). Note that the map $g: \dot Y_{\ttS, \ttS^c} \to \dot Y'_{\ttS, \ttS^c}$ is a Frobenius factor, so the base change $(\dot Y_{\ttS, \ttS^c})_{\eta_\ttT}$ to the \emph{geometric} generic point may not be reduced; we write $(\dot Y_{\ttS, \ttS^c})_{\eta_\ttT}^\mathrm{red}$ for its reduced subscheme.
Then the base change of $g$ over to $\eta_\ttT$, denoted by $g_{\eta_\ttT}$, gives a Frobenius factor (over the residue field $\kappa_\ttT$ at $\eta_\ttT$):
\begin{equation}
\label{E:gred}
g_{\eta_\ttT}^\mathrm{red}: (\dot Y_{\ttS, \ttS^c})_{\eta_\ttT}^\mathrm{red} \to (\dot Y'_{\ttS, \ttS^c})_{\eta_\ttT}.
\end{equation}
We claim that this is in fact the $p$-Frobenius in the factor labeled by $\tau$ for which $\tau = \tau_a$ with $a \equiv 1 \bmod e$, and an isomorphism on other factors.

To easy the presentation, we may extend both $\calM$ and $\calM(\gothp)$ from over $\calO$ to over the completion of maximal unramified extension of $\calO$. This way, all closed points of $\calM$ and $\calM(\gothp)$ are defined over $\overline \FF_p$. 
We follow the proof of Proposition~\ref{P:LCI}
to take a small enough Zariski open neighborhood $\calU \subset \calM$ of $\eta_\ttT$ (in the integral model) and then take a small enough Zariski open subset $\calV \subset \pi_1^{-1}(\calU)\subset \calM(\gothp)$ intersecting the fiber $\dot Y_{\ttS, \ttS^c}$, such that the tuple 
\[
(\calH_\tau|_\calV, \calH'_\tau|_\calV, \phi_\tau^*,\psi_\tau^*)_{\tau \in \Sigma_\gothp}\quad \textrm{
is isomorphic to }
\quad
\big(\calO_\calV^{\oplus 2}, \calO_\calV^{\oplus 2}, \big( \begin{smallmatrix}
1 & 0 \\ 0 & \tau(\varpi)
\end{smallmatrix} \big),\big( \begin{smallmatrix}
\tau(\varpi) & 0 \\ 0 & 1
\end{smallmatrix} \big) \big)_{\tau \in \Sigma_\gothp}.
\]
Let $\ttF$ denote the moduli problem of rank one $\calO_\calV$-subbundle $M_\tau \subseteq \calO_\calV^{\oplus 2}$ for each $\tau \in \Sigma_p \backslash \ttT$ corresponding to the subbundle $\dot \omega_{\calA, \tau}|_\calV \subset \calH_\tau|_\calV$.
Let $\ttG$ denote the moduli problem of rank one $\calO_\calV$-subbundles $M_\tau \subset \calO_\calV^{\oplus 2}$ for each $\tau \in \ttS$ corresponding to the subbundles $\dot \omega_{ \calA,\tau}|_\calV \subset \calH_\tau|_\calV$ and rank one subbundle $M'_\tau \subset \calO_\calV^{\oplus 2}$ for each $\tau \in \ttS^c$ corresponding to the subbundle $\dot \omega_{\calA',\tau}|_\calV \subseteq \calH'_\tau|_\calV$.
The theory of local model says that $\dot X_{\ttT}$ (resp.  $\dot Y_{\ttS, \ttS^c}$) is \'etale locally isomorphic to $\ttF_{\overline \FF_p}$ (resp. $\ttG_{\overline \FF_p}$).
The local parameters of $\ttF_{\overline \FF_p}$ (at a point) are $u_\tau$ for $\tau \in \Sigma_p \backslash \ttT$ which measures the position $\dot \omega_{\calA, \tau}|_\calV \subset \calH_\tau|_\calV$. In particular, the completion of $\dot X_\ttT$ at a closed $\overline \FF_p$-point $x$ is isomorphic to $\overline \FF_p\llbracket (u_\tau)_{\tau \in \Sigma_\gothp \backslash \ttT}\rrbracket$.
The local parameters of $\ttG_{\overline \FF_p}$ (at a point) are $u_\tau$ for $\tau \in \ttS$ which measures the position of $\dot \omega_{\calA, \tau}|_\calV \subset \calH_\tau|_\calV$, and $v_\tau$ for $\tau \in \ttS^c$ which measures the position of $\dot \omega_{\calA', \tau}|_\calV \subset \calH'_\tau|_\calV$.
In particular, the completion of $\dot Y_{\ttS, \ttS^c}$ at a closed $\overline \FF_p$-point $y \in \pi_1^{-1}(x) \cap \dot Y_{\ttS, \ttS^c}$ is isomorphic to $\overline \FF_p\llbracket (u_\tau)_{\tau \in \ttS}, (v_\tau)_{\tau \in \ttS^c}\rrbracket$.
Note that we can use the same notation $u_\tau$ for local parameters on $\ttF_{\overline \FF_p}$ and on $\ttG_{\overline \FF_p}$ because in  the homomorphism 
\begin{equation}
\label{E:map between local models}
\calO_{\calU, x}^\wedge \cong \overline \FF_p\llbracket (u_\tau)_{\tau \in \Sigma_p \backslash \ttT}\rrbracket \longrightarrow 
\calO_{\calV, y}^\wedge \cong  \overline \FF_p\llbracket (u_\tau)_{\tau \in \ttS}, (v_\tau)_{\tau \in \ttS^c}\rrbracket
\end{equation}
on the completions induced by $\pi_1$, one may choose the local parameters in a compatible way so that $u_\tau$ for $\tau \in \ttS$ is taken to $u_\tau$.

To understand the image of $u_\tau$ for $\tau \in (\Sigma_\gothp \backslash \ttT)\backslash \ttS= \ttS^c \cap \sigma(\ttS^c)$, we consider a variant of the argument of Proposition~\ref{P:description of line bundles}(1).
If $\tau, \theta^{-1}\tau \in \ttS^c$ and if $\tau = \tau_a$ with $a \equiv 1 \bmod e$, the proof of Proposition~\ref{P:description of line bundles}(1) implies that $\Ker (\Ha'_\tau) = \mathrm{Ker} (\phi^*_{\tau, \overline \FF_p})$. So we may choose an isomorphism $\zeta_\tau: \calH_{\tau, \overline \FF_p} \cong \calH'^{(p)}_{\theta^{-1}\tau, \overline \FF_p}$ such that $\Ha'_\tau$ is the same as $\zeta_\tau \circ \phi_{\tau, \overline \FF_p}^*$. Under this identification, we have
\[
\zeta_\tau (\dot \omega_{\calA, \tau, \overline \FF_p} ) = \zeta_\tau(\mathrm{Im}(\phi^*_{\tau, \overline \FF_p})) = \mathrm{Im}(\Ha'_\tau) = \dot \omega_{\calA', \theta^{-1}\tau, \overline \FF_p}^{\otimes p}.
\]
So we see that we can rearrange the choices of local parameters so that the local parameter $u_{\tau}$ (for $\tau \in \ttS^c \cap \theta(\ttS^c)$ and $\tau = \tau_a$ with $a \equiv 1 \bmod e$) is sent to $v_{\theta^{-1}\tau}^p$ under the map \eqref{E:map between local models}.
The same argument shows that, when $\tau \in \ttS^c \cap \theta(\ttS^c)$ and $\tau = \tau_a$ with $a \not\equiv 1 \bmod e$, we can rearrange the choices of local parameters so that $u_\tau$ is sent to $v_{\theta^{-1}\tau}$.

Using this, we see that the completion at a closed point $y_{\eta_\ttT}$ of $(\dot Y_{\ttS, \ttS^c})_{\eta_\ttT}$ is isomorphic to
\[
\overline \FF_p((u_\tau)_{\tau \in \Sigma_\gothp \backslash \ttT})^\mathrm{alg} \otimes_{\overline \FF_p[(u_\tau)_{\tau \in \Sigma_\gothp \backslash \ttT}]} \overline \FF_p[(u_\tau)_{\tau \in \ttS}, (v_{\theta^{-1}\tau})_{\tau \in \ttS^c \cap \theta(\ttS^c)} ]\llbracket (v_{\tau})_{\tau \in \theta^{-1}(\ttS)\cap \ttS^c}\rrbracket.
\]
Using the identification of $u_{\tau}$ with $v_{\theta^{-1}\tau}^p$ (resp.  $v_{\theta^{-1}\tau}$) for $\tau \in \ttS^c \cap \theta(\ttS^c)$ with $\tau =\tau_a$ for $a \equiv 1\bmod e$ (resp. $a \not \equiv 1 \bmod e$), we see that the completion of $(\dot Y_{\ttS, \ttS^c})_{\eta_\ttT}^\mathrm{red}$
at a closed point $y_{\eta_\ttT}$ is isomorphic to 
\[
\kappa_\ttT\llbracket (v_\tau)_{\tau \in \theta^{-1}(\ttS)\cap \ttS^c}\rrbracket.
\]
Here we recall that $\kappa_\ttT$ is the residue field of $\eta_\ttT$ and $v_\tau$ is the coordinate for the subbundle $\dot \omega_{\calA', \tau} \subseteq H'_\tau$.

On the other hand, the completion of $(\dot Y'_{\ttS, \ttS^c})_{\eta_\ttT}$ at $f_{\eta_\ttT}(y_{\eta_\ttT})$ is isomorphic to $\kappa_\ttT\llbracket (v'_\tau)_{\tau \in \ttS\backslash \theta(\ttS)} \rrbracket$, where $v'_\tau$ is the coordinate of the chosen subbundle of $\calH_\tau$ in the definition of $\dot Y'_{\ttS, \ttS^c}$.
We need to show that, up to adjusting the local parameter $v'_\tau$,
\begin{equation}
\label{E:frobenius on P1}\textrm{ for every }\tau \in \ttS\backslash \theta(\ttS), \quad
f^*_{\eta_\ttT}(v'_\tau) = \begin{cases} v_{\theta^{-1}\tau}^p & \textrm{if }\tau = \tau_a\textrm{ with }a \equiv 1 \bmod e;\\
v_{\theta^{-1}\tau} & \textrm{if }\tau = \tau_a\textrm{ with }a \not\equiv 1 \bmod e.
\end{cases}
\end{equation}
For this, we fix one such $\tau \in \ttS\backslash \theta(\ttS)$. We assume that $\tau = \tau_a$ with $a \equiv 1 \bmod e$ (and the other case can be proved in the same way by removing all the Frobenius twists).
Following exactly the same argument as above, we start by noticing that $\Ker(\Ha'_\tau ) = \Ker(\phi^*_{\tau, \overline \FF_p})$.
So we may choose an isomorphism $\zeta_\tau: \calH_{\tau, \overline \FF_p} \cong \calH'^{(p)}_{\theta^{-1}\tau, \overline \FF_p}$ such that $\Ha'_\tau$ is the same as $\zeta_\tau \circ \phi_{\tau, \overline \FF_p}^*$. Under this identification, we have
\[
\zeta_\tau(\mathrm{Im}(\phi^*_{\tau, \overline \FF_p})) = \mathrm{Im}(\Ha'_\tau) = \dot \omega_{\calA', \theta^{-1}\tau, \overline \FF_p}^{\otimes p}.
\]
So it follows that, up to adjusting the local parameter, \eqref{E:frobenius on P1} holds. Since $g_{\eta_\ttT}^\mathrm{red}$ is already a Frobenius factor, it must take the form as described in (1).\\

Now we may identify $(\dot Y_{\ttS, \ttS^c})_{\eta_\ttT}^\mathrm{red}$ with $(\PP^1_{\eta_{\ttT}})^t$.
Write $\calO_i(1)$ for the canonical quotient bundle from the $i$th factor. In particular, $g_{\eta_\ttT}^*\calO_{\tau_{a_i-\lambda_i}}(1) $ is equal to $ \calO_i(p)$ if $a_i -\lambda_i \equiv 1 \bmod e$ and to $\calO_i(1)$ otherwise.
As a corollary of this and Proposition~\ref{P:description of line bundles}, we have
\begin{equation}
\label{E:omega restricted to P1}
\dot \omega_{\calA', \tau}|_{(\dot Y_{\ttS, \ttS^c})_{\eta_\ttT}^\mathrm{red}}
\cong \begin{cases}
\calO_i(p) & \textrm{if } \tau = \tau_{a_i-\lambda_i} \textrm{ and }a_i-\lambda_i \equiv 1 \bmod e,
\\
\calO_i(1) & \textrm{if } \tau = \tau_{a_i-\lambda_i}\textrm{ and }a_i-\lambda_i \not\equiv 1 \bmod e,
\\
\calO_i(-1) & \textrm{if } \tau = \tau_{a_i-\lambda_i-1},
\\
\calO_{(\dot Y_{\ttS, \ttS^c})_{\eta_\ttT}^\mathrm{red}} 
& \textrm{otherwise}.
\end{cases}
\end{equation}

We now turn to (2). Corollary~\ref{C:property of pi1}(3) and (4) explained the intersection relation among $\dot Y_{\ttS_{\underline \lambda}, \ttS^c_{\underline \lambda}}$'s.
Put $s_i = a_i - a_{i-1} - 1$ for $i \geq 2$ and $s_1 = a_1+ef-a_t-1$.
For example when $t = 2$, the following diagram shows the intersection relation, where two irreducible components of $\dot Z_\ttT$ intersect in codimension $1$ if they are linked by a line, and they intersect in codimension $2$ if they are at the opposite vertices of a square:
\[
\xymatrix@R=10pt@C=10pt{
\dot Y_{\ttS_{1, 1}, \ttS_{1,1}^c} \ar@{-}[r]
\ar@{-}[d] & \dot Y_{\ttS_{1, 2}, \ttS_{1,2}^c} \ar@{-}[r]
\ar@{-}[d] & \cdots \ar@{-}[r] & \dot Y_{\ttS_{1, s_2}, \ttS_{1,s_2}^c} \ar@{-}[d]
\\
\dot Y_{\ttS_{2, 1}, \ttS_{2,1}^c} \ar@{-}[r]
\ar@{-}[d] & \dot Y_{\ttS_{2, 2}, \ttS_{2,2}^c} \ar@{-}[r]
\ar@{-}[d] & \cdots \ar@{-}[r] & \dot Y_{\ttS_{2, s_2}, \ttS_{2,s_2}^c} \ar@{-}[d]
\\
\vdots \ar@{-}[d] & \vdots \ar@{-}[d] & \ddots & \vdots \ar@{-}[d]
\\
\dot Y_{\ttS_{s_1, 1}, \ttS_{s_1,1}^c} \ar@{-}[r] & \dot Y_{\ttS_{s_1, 2}, \ttS_{s_1,2}^c} \ar@{-}[r] & \cdots \ar@{-}[r] & \dot Y_{\ttS_{s_1, s_2}, \ttS_{s_1,s_2}^c}. 
}
\]

Moreover, these $\dot Y_{\ttS_{\underline\lambda}, \ttS^c_{\underline \lambda}}$'s have proper intersections by the proof of Proposition~\ref{P:LCI}. So by Corollary~\ref{C:property of pi1}(4), $\dot\omega^\kappa|_{\dot Z_\ttT}$ is the successive extension of
\begin{equation}
\label{E:subquotients}
\dot\omega^{\kappa}|_{\dot Y_{\ttS_{\underline\lambda}, \ttS^c_{\underline \lambda}}} \otimes \bigotimes_{i=1: \lambda_i \neq s_i}^t \big(
\omega_{\calA', \tau_{a_i-\lambda_i-1}}  \otimes\omega_{\calA, \tau_{a_i-\lambda_i-1}}^{-1}
\big) \quad \textrm{for all }\lambda_i \in \{1, \dots, s_i\},
\end{equation}
in which the term with $\underline \lambda = \underline 1$ is the subobject and the term with $\underline \lambda = (s_i)_{i =1, \dots, t}$ is the quotient object.
Restricting this to the $(\PP^1_{\eta_\ttT})^t$-bundle $(\dot Y'_{\ttS_{\underline\lambda}, \ttS_{\underline\lambda}^c})_{\eta_\ttT}^\mathrm{red}$, this is equal to
\[
\bigotimes_{i=1}^t \begin{cases} \calO_i(pk_{a_i-\lambda_i}-k_{a_i-\lambda_i-1}) & \textrm{if } \lambda_i = s_i \textrm{ and }a_i-\lambda_i \equiv 1 \bmod e ,
\\
\calO_i(k_{a_i-\lambda_i}-k_{a_i-\lambda_i-1}) & \textrm{if } \lambda_i = s_i \textrm{ and }a_i-\lambda_i \not\equiv 1 \bmod e ,
\\
\calO_i(pk_{a_i-\lambda_i}-k_{a_i-\lambda_i-1}-1) & \textrm{if } \lambda_i \neq s_i \textrm{ and }a_i-\lambda_i \equiv 1 \bmod e ,
\\
\calO_i(k_{a_i-\lambda_i}-k_{a_i-\lambda_i-1}-1) & \textrm{if } \lambda_i \neq s_i \textrm{ and }a_i-\lambda_i \not\equiv 1 \bmod e .
\end{cases}
\]
By the assumption of Proposition~\ref{P:ample=>small support}, the numbers in the parentheses of the right hand side are always $\geq 1$. 
Since $H^1(\PP^1, \calO(n)) = 0$ for $n \geq -1$, we see that
\[
R^{>0}\pi_{1,*}\Big(\textrm{\eqref{E:subquotients}}\big|_{(\dot Y'_{\ttS_{\underline\lambda}, \ttS_{\underline\lambda}^c})_{\eta_\ttT}^\mathrm{red}} \Big)= 0.
\]
It then follows that
\begin{equation}
\label{E: Rt vanishes red}
R^t\pi_{1,*} \big((\pi_2^*  \dot \omega'^\kappa)|_{(\dot Z_\ttT)_{\eta_\ttT}^\mathrm{red}} \big) = 0.
\end{equation}

To prove the needed vanishing of \eqref{E:isom at eta T} and hence Proposition~\ref{P:ample=>small support}, we observe that, due the cohomological dimension, by Proposition~\ref{P:formal function}(1), $R^t\pi_{1,*}(-)$ is a \emph{right exact} functor on sheaves set-theoretically supported on $(\dot Z_\ttT)_{\eta_\ttT}^\mathrm{red}$, and it is trivial on any coherent sheaf set-theoretically supported in dimension $<t$ subspace of $(\dot Z_\ttT)_{\eta_\ttT}^\mathrm{red}$.
We show below that, by (a variant of) Nakayama lemma, this implies that
\[
R^t\pi_{1,*} \big((\pi_2^* \dot  \omega'^\kappa)|_{(\dot Z_\ttT)_{\eta_\ttT}} \big) = 0.
\]
Indeed, write $\calF$ for $(\pi_2^* \dot  \omega'^\kappa)|_{(\dot Z_\ttT)_{\eta_\ttT}}$. If $\calI$ is the ideal sheaf of $(\dot Z_\ttT)_{\eta_\ttT}^\mathrm{red}$ in $(\dot Z_\ttT)_{\eta_\ttT}$, it is enough to show that
\[
R^t\pi_{1,*}(\calI^i\calF/ \calI^{i+1}\calF) = 0 \quad \textrm{for every }i \geq0.
\]
But $\calI^i/\calI^{i+1} \otimes \calF \twoheadrightarrow \calI^i\calF/ \calI^{i+1}\calF$. By the right exactness of $R^t{\pi_{1,*}}(-)$, it suffices to show the vanishing of $R^t\pi_{1,*}(\calI^i/ \calI^{i+1} \otimes \calF)$. But $\calI^i/\calI^{i+1} \otimes \calF$ is (scheme-theoretically) supported on $(\dot Z_\ttT)^\mathrm{red}_{\eta_\ttT}$ and it receives generic surjective maps from finite direct sums of $\calF|_{(\dot Z_\ttT)^\mathrm{red}_{\eta_\ttT}}$ (for example induced by local generators of $\calI^i$). By the properties of $R^t\pi_{1,*}(-)$ recalled above and the vanishing result \eqref{E: Rt vanishes red}, we deduce that $R^t\pi_{1,*}(\calI^i/ \calI^{i+1} \otimes \calF)=0$. This concludes Proposition~\ref{P:ample=>small support}. \hfill \qed

\section{Results on the unramifiedness of modular representations in weight ${\bf 1}$}
\label{Sec:section 5}

Recall that $\calO$ denotes the ring of integers in a large enough finite extension $E$ of $\QQ_p$, with uniformizer $\varpi$ and residue field $\FF$. For simplicity, we assume for the entirety of this section that the prime $p$ is \emph{inert} in $F$, so that the Hecke operator $T_\gothp$ will be denoted by $T_p$. We denote by $\epsilon:G_F\to\calO^\times$ the $p$-adic cyclotomic character of $G_F$, and by $\epsilon_m$ its reduction modulo $\varpi^m$.

Recall that $\Sh$ denotes the Hilbert modular Shimura scheme, smooth over $\Spec\calO$, of tame level $K^p$ satisfying Hypothesis~\ref{H:Kp neat}. For any positive integer $m$, denote by $\Sh_m$ the base change of $\Sh\rightarrow\Spec\calO$ to $\Spec(\calO/(\varpi^m))$, and write similarly $\Sh_m^\tor,\omega^\kappa_m$, etc. 

We assume throughout this section that $p$ is odd.

\subsection{Shimura varieties with auxiliary level structures} 
\label{S:Sh(Q)01}
We follow \cite{CG} for most of the notation and constructions of this section. Recall that $\calS$ denotes the finite set of places including the archimedean places,  $p$-adic places, and all the places $\gothq$ where $K_\gothq \neq \GL_2(\calO_{F_\gothq})$ (cf. \S \ref{S:geometric HMF}). Let $\calQ$ denote a finite set of finite places of $F$ disjoint from $\calS$. (We will fix later suitable sets $\calQ$ consisting of Taylor--Wiles primes; for now, $\calQ=\emptyset$ is allowed). With abuse of notation, we will often use the letters $\calS$ and $\calQ$ also to denote the ideals of $\calO_F$ determined by the ``product'' of the finite places in $\calS$ and $\calQ$, respectively.

Denote by $\Sh(\calQ)$ (resp. $\Sh(\calQ)_1$) the Shimura scheme over $\Spec\calO$ of tame level 
\begin{equation}
\label{E:Q level}
K^p(\calQ): =\left\{ \begin{pmatrix}
a&b\\c& d
\end{pmatrix} \in K^p; c \equiv 0 \bmod \calQ\right\} \quad \textrm{and}\quad 
K^p(\calQ)_1: =\left\{ \begin{pmatrix}
a&b\\c& d
\end{pmatrix} \in K^p(\calQ); d \equiv a \bmod \calQ\right\},
\end{equation}
respectively.
For each $\gothc \in \gothC$, there is a natural \'etale morphism $\calM_\gothc(\calQ)_1 \to \calM_\gothc(\calQ) \to \calM_\gothc$, where the first map has Galois group $(\calO_F/\calQ\calO_F)^\times\cong \prod_{\gothq\in \calQ}(\calO_F/\gothq)^\times$.
Note that we have equalities
\begin{equation}
\label{same quotient group}
K^pK_p \cap \calO_F^\times = 
K^p(\calQ)K_p \cap \calO_F^\times = K^p(\calQ)_1K_p \cap \calO_F^\times.
\end{equation}
So when passing to the quotient by this group (as in the beginning of \S\ref{S:integral model Sh var}) and summing over all $\gothc \in \gothC$, we obtain a natural \'etale cover
\begin{equation}
\label{E:cover Sh1Q to ShQ to Sh}
\Sh(\calQ)_1 \to \Sh(\calQ) \to \Sh,
\end{equation}
where the first map has Galois group $(\calO_F / \calQ\calO_F)^\times$.\footnote{In an earlier version of this paper, we used another tame level structure that causes further complication. We thank the anonymous referee for pointing out this.}
Let $(\calO_F/\calQ\calO_F)^\times\twoheadrightarrow\Delta$ be a quotient map and denote by $\calM_\gothc(\calQ)_\Delta$ (resp.  $\Sh(\calQ)_{\Delta}$) the corresponding subcover over $\calM_\gothc(\calQ)$ (resp.  $\Sh(\calQ)$) with Galois group $\Delta$. 

We now explain the extension of \eqref{E:cover Sh1Q to ShQ to Sh} to the toroidal compactification.  Using what we have recalled on the toroidal compactifications in \S \ref{S:toroidal}, we see that over each cusp of $\calM_\gothc$ labeled by $\calC= (\gotha, \gothb, L, i, j, \lambda, \alpha)$, the cusps of $\calM_\gothc(\calQ)$ are labeled by subsets $\calR \subseteq \calQ$:
\[
\calC_\calR = (\gotha, \gothb, L, i, j, \lambda, \alpha_\calR)
\]
where $\alpha_\calR$ is a $K^p(\calQ) / K(N\calQ)^p$-orbit of isomorphisms
\[
\alpha \oplus \bigoplus_{\gothq \in \calQ} \alpha_{\calR, \gothq}: (\calO_F/N\calO_F)^{\oplus 2} \oplus \bigoplus_{\gothq \in \calQ} (\calO_F/ \gothq \calO_F)^{\oplus 2} \xrightarrow \sim N^{-1}L / L \oplus \bigoplus_{\gothq \in \calQ} \gothq^{-1} L / L,
\]
where $\alpha_{\calR, \gothq}$ is given by the matrix $\big(\begin{smallmatrix}
1&0\\0&1
\end{smallmatrix} \big)$ if $\gothq \in \calR$, and is given by the matrix  $\big(\begin{smallmatrix}
0&1\\1&0
\end{smallmatrix} \big)$ if $\gothq \notin \calR$.
Rigorously speaking, to literally apply \S \ref{S:toroidal}, we need to use principal level structure  with \emph{integer} levels. But we can easily modify the above definition by introducing a positive prime-to-$pN$ integer $Q$ that is divisible by $\calQ$, and then take $K(N\calQ)^p / K(NQ)^p$ invariants.

We say that this cusp $\calC_{\calR}$ of $\calM_\gothc(\calQ)$ is \emph{unramified} at $\calR\subseteq \calQ$.
In terms of the recipe in \S \ref{S:toroidal}, we have $X_{[\gamma_{\calC_{\calR}}]} = \frac{\calR}{\calQ}X_{[\gamma_\calC]}$ (in particular, if $\calR= \calQ$, the map $\calM_\gothc(\calQ) \to \calM_\gothc$ is an isomorphism at that cusp if we use the pullback cone decomposition.)
In general, we give an $\calO_F^{\times, +}$-stable smooth admissible cone decomposition of $X^{*+}_{[\gamma_{\calC_\calR}]}$ at each cusp $\calC_\calR$ that refines the restriction of the cone decomposition of 
$X^{*+}_{[\gamma_{\calC}]}$. This way we obtain a morphism $\calM_\gothc(\calQ)^\tor \to \calM_\gothc^\tor$ and then a morphism $\Sh(\calQ)^\tor \to \Sh^\tor$ that extends the second map in \eqref{E:cover Sh1Q to ShQ to Sh}.

For the map $\calM_\gothc(\calQ)_\Delta \to \calM_\gothc(\calQ)$, over each such cusp $\calC_\calR$ of $\calM_\gothc(\calQ)$, the cusps of $\calM_\gothc(\calQ)_\Delta$ are parametrized by $\Delta$. Precisely speaking, for each $\delta \in \Delta$, there is a cusp with label 
\[
\calC_\calR = (\gotha, \gothb, L, i, j, \lambda, \alpha_{\calR, \delta}),
\]
where $\alpha_{\calR, \delta} = \alpha \oplus \bigoplus_{\gothq \in \calQ} 
\big(\alpha_{\calR, \gothq} \cdot \big(\begin{smallmatrix}
\tilde \delta &0\\0&1
\end{smallmatrix}\big)\big)$, where $\tilde \delta$ is a lift of $\delta$ for the quotient map $(\calO_F/Q\calO_F)^\times\twoheadrightarrow\Delta$. We note that $X_{[\gamma_{\calC_{\calR, \delta }}]} \cong X_{[\gamma_{\calC_\calR}]}$. So we may pullback the cone decomposition on $X_{[\gamma_{\calC_\calR}]}^{*+}$ to a (smooth admissible) cone decomposition on $X_{[\gamma_{\calC_{\calR, \delta}}]}^{*+}$. For the rest of this paper, we shall always take the cone decomposition on $\calM_\gothc(\calQ)_\Delta$ this way. Therefore, we have natural \emph{\'etale} covering maps
$$\calM_\gothc(\calQ)_\Delta^\tor \twoheadrightarrow \calM_{\gothc}(\calQ)^\tor \quad \textrm{and}\quad \Sh(\calQ)_{\Delta}^\tor\twoheadrightarrow\Sh(\calQ)^\tor$$
with Galois group $\Delta$.

\subsection{Hecke algebras with Taylor--Wiles primes} 
\label{S:Hecke TW primes}
For a finite set of places $\calQ$ as above (allowing $\calQ = \emptyset$) and a choice of quotient $(\calO_F/\calQ\calO_F)^\times \twoheadrightarrow \Delta$, we define the \emph{abstract tame Hecke algebra} to be
\[
\TT_\calQ^\univ: = \calO[\Delta][t_\gothq; \gothq \notin \calS\cup \calQ][ s_\gothq^\rmn;\gothq \textrm{ finite}].
\]
Let $\TT_{\calQ}^{\{0\}}$ denote the image of the abstract tame Hecke algebra acting on $\oplus_{m\geq 1}H^0(\Sh(\calQ)_{\Delta,m}^\tor,\omega^{({\bf 1}, -1)}_m)$, by sending $t_\gothq \mapsto T_\gothq$ ($\gothq \notin \calS \cup \calQ$), $s_\gothq^\rmn \mapsto S_\gothq^\rmn$, and $[a]\mapsto $ the Diamond operator $\langle \tilde a \rangle$ for $\tilde a \in (\calO_F/\calQ\calO_F)^\times$ lifting $a \in \Delta$.  We remind the reader again that, due to our normalization of tame Hecke operators in \S\ref{S:tame Hecke operators}, weight $({\bf n}, n-2))$ is the parallel weight $n$ in many other literatures. 

Let $\bar \rho: G_F \to \GL_2(\FF)$ be an absolute irreducible representation (which we do not assume to be unramified at $p$ at this moment). Let $\gothm'_\emptyset$ denote the maximal ideal of $\TT_\emptyset^{\{0\}}$, generated by 
\begin{equation}
\label{E:generators of mrho}
\varpi, \quad t_\gothq - \mathrm{tr}(\bar{\rho}(\Frob_\gothq))\ (\gothq \notin \calS), \quad \textrm{and} \quad  s_\gothq^\rmn - \det(\bar \rho(\Frob_\gothq)) \textrm{ (for all finite }\gothq),
\end{equation}
where $\det(\bar \rho(\Frob_\gothq))$ is independent of the choice of the Frobenius elements at $\gothq$.
We assume that $\calQ$ satisfies the following additional conditions:
\begin{itemize}
\item for each $\gothq\in \calQ$ we have $\NN(\gothq) \equiv 1 \mod p$,
\item for each $\gothq\in \calQ$ the polynomial $X^2-T_\gothq X + S_\gothq^\rmn\in\TT_\emptyset^{\{0\}}[X]$ has distinct roots modulo $\gothm_\emptyset$; we choose for each $\gothq\in \calQ$ one such root $\alpha_\gothq\in\FF$ (and enlarging the field $\FF$ if necessary), and
\item
The group $\Delta$ is a $p$-group.

\end{itemize} 

Let $\gothm'_\calQ$ denote the maximal ideal of $\TT_\calQ^{\{0\}}$ containing the generators \eqref{E:generators of mrho} and the elements $U_\gothq-\alpha_\gothq$ for $\gothq\in \calQ$. 
It follows from the main theorem of \cite{emerton-reduzzi-xiao} that there is a Galois representation $\rho_\calQ:G_F\to\GL_2(\TT^{\{0\}}_{\calQ,\gothm'_\calQ})$ lifting $\bar\rho$, unramified outside $\calS\cup \calQ$, and such that $\mathrm{tr}(\rho_\calQ(\Frob_\gothq))=T_\gothq$ and $\det(\rho_\calQ(\Frob_\gothq)) = S_\gothq^\rmn$ for all $\gothq\notin \calS \cup \calQ$. 
\begin{proposition}\label{P:nr}
Assume that for any lift $\Frob_p\in G_F$ of the arithmetic Frobenius at $p$, the eigenvalues of $\bar\rho(\Frob_p)$ in $\overline\FF$ are distinct. Then there exists a unique deformation
$$\rho_\calQ:G_F\to\GL_2(\TT_{\calQ,\gothm'_\calQ}^{\{0\}})$$
of $\bar\rho$ unramified outside $\calS \cup \calQ$ and such that for all primes $\gothq\notin \calS \cup \calQ$ we have $\mathrm{tr}(\rho_\calQ(\Frob_\gothq))=T_\gothq$ and $\det (\rho_\calQ(\Frob_\gothq)) = S_\gothq^\rmn$. In particular, $\rho_\calQ$ is unramified at $p$.
\end{proposition}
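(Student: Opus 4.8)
The plan is to adapt the arguments of \cite[Proposition~2.7]{edixhoven}, \cite{deligne-serre} and \cite[proof of Theorem~3.11]{CG} to the present Hilbert setting with $p$ inert, the crucial new ingredient being the operator $T_p$ of Section~\ref{Sec:section 3} together with its $q$-expansion (Remark~\ref{R:q-exp}). First I would produce $\rho_Q$ by pushing $\rho'_Q$ forward along the surjection $\TT_{Q,\gothm'_Q}\twoheadrightarrow\TT^{\{0\}}_{Q,\gothm_Q}$, which is legitimate since $\gothm'_Q$ is the preimage of $\gothm_Q$; the resulting $\rho_Q\colon G_F\to\GL_2(\TT^{\{0\}}_{Q,\gothm_Q})$ lifts $\bar\rho$, is unramified outside $p\mathcal N Q$, and satisfies $\mathrm{tr}\,\rho_Q(\Frob_x)=T_x$ for $x\nmid p\mathcal N Q$. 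Its uniqueness among such deformations is standard: the absolute irreducibility of $\bar\rho$ (which holds because $\gothm_\emptyset$ is non-Eisenstein) together with Chebotarev forces a $\GL_2$-valued deformation over a complete local Noetherian ring to be determined up to conjugation by its trace function on Frobenii. Hence the whole problem reduces to improving ``unramified outside $p\mathcal N Q$'' to ``unramified outside $\mathcal N Q$'', i.e.\ to showing $\rho_Q|_{I_p}=1$.

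The engine is the total Hasse invariant. Since $p$ is inert, $\Ha:=h_{\tau_1}\cdots h_{\tau_g}$ is a section of $\omega^{((p-1)\mathbf 1,\,0)}_\FF$ with trivial $q$-expansion; for each $m\ge 1$, choosing $c_m$ divisible by $2p^{m-1}$ and so large that the paritious weight $\kappa_m:=((1+c_m(p-1))\mathbf 1,\,1)$ is regular and $\omega^{\kappa_m}$ is ample, the liftability results of \cite{emerton-reduzzi-xiao} produce a section $E_m\in H^0(\Sh(Q)^\tor_{\Delta,m},\omega^{(c_m(p-1)\mathbf 1,\,0)}_m)$ lifting $\Ha^{c_m}$ and again with trivial $q$-expansion. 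Multiplication by $E_m$ is injective on $H^0(\Sh(Q)^\tor_{\Delta,m},\omega^{\mathbf 1}_m(-\ttD))$ (as $E_m$ is a unit on the dense ordinary locus and $\Sh^\tor_m$ is flat over $\calO/\varpi^m$ with reduced special fibre) and commutes with the tame operators $T_x$ ($x\nmid p\mathcal N Q$), the $U_x$ ($x\mid Q$) and the diamonds. The heart of the proof is then the computation of the commutator $[T_p,\cdot E_m]$: because $\kappa_m$ is ample the weight-$\kappa_m$ forms $E_m\cdot f$ lift to characteristic zero, so $T_p$ acts on them as the classical Hecke operator, with $q$-expansion as in Remark~\ref{R:q-exp}; with $k_\tau=1+c_m(p-1)$ one has $\prod_{\tau\in\Sigma}p^{k_\tau-1}=p^{gc_m(p-1)}\equiv 0\pmod{\varpi^m}$, and since $E_m$ has trivial $q$-expansion the upshot is that $T_p\circ(\cdot E_m)$ and $(\cdot E_m)\circ T_p$ differ precisely by the ``$V_p$-term'' $\varepsilon(p)\,a_{p^{-1}\alpha}$ of the weight-$\mathbf 1$ formula. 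Transporting this identity (together with the $q$-expansion principle, used to descend from weight $\kappa_m$ back to weight $\mathbf 1$) shows that, after localizing at $\gothm_Q$, the weight-$\mathbf 1$ cohomology is controlled by cohomology in the ample weight $\kappa_m$, which lifts to characteristic zero; concretely, the action of the local framed deformation ring $R_p$ of $\bar\rho|_{G_p}$ on $H^0(\Sh(Q)^\tor_{\Delta,m},\omega^{\mathbf 1}_m(-\ttD))_{\gothm_Q}$ factors through the quotient $R_p/\mathscr I$ cutting out the unramified locus, the choice of $c_m$ divisible by a large power of $p$ being what kills the cyclotomic twists modulo $\varpi^m$.

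Finally, since $\rho_Q|_{G_p}$ is the specialization of the universal framed deformation along $R_p\to\TT^{\{0\}}_{Q,\gothm_Q}$, the factorization through $R_p/\mathscr I$ forces $\rho_Q|_{I_p}$ to be trivial modulo $\varpi^m$ for every $m$, hence $\rho_Q|_{I_p}=1$; the distinctness of the eigenvalues of $\bar\rho(\Frob_p)$ enters when one unwinds $R_p/\mathscr I$, as it guarantees that $\bar\rho|_{G_p}$ is itself a sum of two distinct unramified characters, so that the unramified deformation locus is a smooth point of the expected dimension and $\rho_Q\bmod\varpi^m|_{G_p}$ is genuinely a sum of two unramified characters rather than merely reducible. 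Then $\rho_Q$ is unramified outside $\mathcal N Q$, which is the assertion. I expect the main obstacle to be the commutator computation of $T_p$ with $E_m$: verifying that the correspondence/derived-category construction of $T_p$ in Section~\ref{Sec:section 3} really interacts with multiplication by $E_m$ as the $q$-expansions predict — especially over the non-ordinary locus where $E_m$ degenerates and $T_p$ is built from the dualizing trace rather than the finite-flat trace — and then making the descent to weight $\mathbf 1$ and the final splitting rigorous over the Artinian local rings $\TT^{\{0\}}_{Q,\gothm_Q}/\varpi^m$, which requires control of wild inertia at $p$.
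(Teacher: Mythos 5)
Your overall strategy --- adapting the Deligne--Serre/Edixhoven/Calegari--Geraghty weight-one argument using the new $T_p$ of Section~\ref{Sec:section 3} and a lift of a power of the total Hasse invariant, with the divisibility $p^{m-1}\mid M$ killing the cyclotomic twist and the distinctness of the Frobenius eigenvalues splitting the extension --- is the right one, and your construction and uniqueness of $\rho_Q$ are fine. But there is a genuine gap exactly at the step you flag as the heart: the passage from the commutator identity to the factorization of the $R_p$-action through $R_p/\mathscr{I}$. Embedding $H^0(\Sh(Q)^\tor_{\Delta,m},\omega^{\bf 1}_m(-\ttD))$ into an ample weight via $\cdot E_m$ and observing that the target lifts to characteristic zero does not give unramifiedness: classical forms of regular weight $\kappa_m$ produce representations that are ordinary at $p$, hence reducible but in general \emph{ramified} there, so ``controlled by liftable cohomology'' only yields that $\rho_Q|_{G_{F_p}}$ is an extension of unramified by unramified (once the cyclotomic twist dies), not that the extension splits. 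Moreover the image of $\cdot E_m$ alone is not even $U_p$-stable, precisely because of the nonvanishing commutator, so there is no well-defined ordinary Hecke action on it to begin with.

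The missing idea is the doubling. The paper sets $V_M:=\varepsilon(p)^{-1}(\tilde h_M\circ T_p-U_p\circ\tilde h_M)$ (your commutator, normalized), proves that $\tilde h_M f$ and $V_M f$ are linearly independent --- this is \emph{not} a consequence of flatness or reducedness (note $\Sh^\tor_m$ is non-reduced for $m>1$); it is checked mod $\varpi$ using the theta operator, which annihilates the image of $V_M$, commutes with $h$, and has trivial kernel in weight ${\bf 1}$ --- and thereby obtains a Hecke-equivariant embedding $\psi=(\tilde h_M,\varepsilon(p)V_M)$ of $H^0(\omega^{\bf 1}_m(-\ttD))^{\oplus 2}$ into the weight-$({\bf n},n)$ forms whose image \emph{is} $U_p$-stable, with $U_p$ acting by $\left(\begin{smallmatrix}T_p&1\\-\langle p\rangle&0\end{smallmatrix}\right)$, so that $U_p^2-T_pU_p+\langle p\rangle=0$ there. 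Localizing at each of the two maximal ideals $\tilde\gothm_\alpha$ and $\tilde\gothm_\beta$ of the resulting weight-$({\bf n},n)$ Hecke algebra exhibits $\rho_{Q,m}|_{G_{F_p}}$ as upper-triangular with unramified quotient $\lambda_{\tilde\alpha}$ and also with unramified quotient $\lambda_{\tilde\beta}$; since $\tilde\alpha\neq\tilde\beta$ these two filtrations are transverse and the extension class vanishes. In your sketch the hypothesis on distinct Frobenius eigenvalues is invoked only to describe $R_p/\mathscr{I}$, not to manufacture the two ordinary stabilizations, and without the doubling there is no mechanism to produce them.
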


\begin{proof}
Recall that we did not insist that $\bar \rho$ to be unramified at $p$. If $\bar \rho$ is ramified at $p$, the representation $\rho_\calQ$ we obtain is the zero representation, i.e. $\TT_{\calQ, \gothm'_\calQ}^{\{0\}} =0$.

Thanks to the existence and properties of the operator $T_p^\rmn$ acting on weight one forms (cf. \S\ref{S:def of Tp}), we can prove this Proposition \emph{exactly} as in \cite[Theorem 3.11]{CG}, where the case $F=\QQ$ is treated (see also \cite{deligne-serre} and \cite[Proposition 2.7]{edixhoven}). For completeness, we sketch the argument below, but the reader is referred to \emph{loc.cit.} for further details.

Let $M$ be a positive integer divisible by $p^{m-1}$ and denote by $\tilde{h}_M\in H^0(\Sh(\calQ)_{\Delta,m}^\tor,\omega_m^{(M({\bf p}-{\bf 1}),M(p-1))})$ a lift to $\calO/(\varpi^m)$ of the $M$th power of the total Hasse invariant $h\in H^0(\Sh(\calQ)_{\Delta,\FF}^\tor,\omega_\FF^{({\bf p}-{\bf 1},p-1)})$ (cf. \cite[\S3.3.1]{emerton-reduzzi-xiao}). Let $U_p^\rmn$ denote the action of the Hecke operator $T_p^\rmn$ on modular forms of paritious weight $({\bf n},n-2):=({\bf 1}+M({\bf p}-{\bf 1}),M(p-1)-1)$ over $\calO/(\varpi^m)$.
Moreover, we assume that $n$ is sufficiently large so that $H^{>0}(\Sh(\calQ)_{\Delta,\FF}^\tor,\omega_\FF^{({\bf n}, n-2)}) =0$.
We remark that, because of our choice of $w = n-2$ (for parallel weight $n$ forms), the normalization factor as explained in Remark~\ref{R:q-exp} is just $1$. So we can temporarily ease ourselves from the scrutiny of normalizations.

Define the operator
$$V_M^\rmn:=\tilde{h}_M\circ T_p^\rmn - U_p^\rmn\circ\tilde{h}_M
$$
sending $H^0(\Sh(\calQ)_{\Delta,m}^\tor,\omega_m^1)$ into $H^0(\Sh(\calQ)_{\Delta,m}^\tor,\omega_m^{({\bf n},n)})$. (If $m=1$ we can choose $M=1$ and then $V_M^\rmn$ coincides with the classical operator $V_p$ (up to a twist) induced by Frobenius base-change on the abelian schemes parametrized by $\Sh\otimes\FF$.)
Notice that $V_M^\rmn$ is well defined, since the hypothesis on our weights guarantees that $T_p^\rmn$ and $U_p^\rmn$ are defined.

Now, we study  the effect of these operators on the $q$-expansions. First, the $q$-expansion of $\tilde{h}_M$ is one at each cusp of $\Sh_m^\tor$.
By Remark~\ref{R:q-exp} and the setup of \S\ref{S:tame Hecke operators}, at a cusp $\calC = (\gotha, \gothb, L, i,j , \lambda, \alpha)$ of $\calM_\gothc$, if we put $\calC': = (\gotha, \gothb, L', i, pj, p\lambda, \big( \begin{smallmatrix}
1&0\\0&p^{-1}
\end{smallmatrix} \big) \alpha)$ and $\calC'': = (\gotha, \gothb, L'', p^{-1}i, j, p\lambda, \big( \begin{smallmatrix}
p^{-1}&0\\0&1
\end{smallmatrix} \big) \alpha)$, where $L'$ (resp. $L''$) is the natural pullback (resp. pushout) of $L$ via the inclusion $\gothb \subseteq p^{-1}\gothb$ (resp. $\gotha^* \subseteq p^{-1}\gotha^*$), we can write down explicit the action of $U_p^\rmn$ and $T_p^\rmn$ on the level of $q$-expansions (modulo $\varpi^m$):
\[
a_\xi(U_p^\rmn(f), \calC, \Tate_{\gotha, \gothb}) = a_{p\xi}(f, \calC', \Tate_{\gotha, \gothb}) \quad\textrm{and}
\]
\[
a_\xi(T_p^\rmn(f), \calC, \Tate_{\gotha, \gothb}) = a_{p\xi}(f, \calC', \Tate_{\gotha, \gothb}) + a_{p^{-1}\xi}(f, \calC'', \Tate_{\gotha, \gothb}).
\]
So it follows that
\[
a_\xi(V_M^\rmn(f), \calC, \Tate_{\gotha, \gothb}) = a_{p^{-1}\xi}(f, \calC'', \Tate_{\gotha, \gothb})
\]

We claim that the following natural map
\begin{equation}
\label{E:doubling}
\psi_m = (\tilde h_M, V_M^\rmn):H^0(\Sh(\calQ)^\tor_{\Delta,m},\omega_m^{({\bf 1}, -1)})^{\oplus 2}\longrightarrow
 H^0(\Sh(\calQ)^\tor_{\Delta,m},\omega^{({\bf n},n-2)}_m)
\end{equation}
is injective.
It is enough to prove the injectivity when $m=1$; indeed, if $a\tilde h_M f = b V_M^\rmn g$ for $a, b \in \calO/\varpi^m$ and $(f, g)\in H^0(\Sh(\calQ)^\tor_{\Delta, m}, \omega_m^{({\bf 1}, -1)})^{\oplus 2}$, then writing $a = \varpi^r \bar a$ and $b = \varpi^r \bar b$ with $(\bar a, \bar b) \in (\calO/\varpi^{m-r})^{\oplus 2}  \backslash \{(0,0)\}$, we must have $\bar a \tilde h_M f = \bar b V_M^\rmn g$ in $H^0(\Sh(\calQ)^\tor_{\Delta,1}, \omega_1^{({\bf n}, n-2)})$. So injectivity of $\psi_1$ implies the injectivity of $\psi_m$.
We quickly remark that both $V_M^\rmn$ and multiplication by $h^M$ are clearly (individually) injective as can be seen from the map on the $q$-expansions.
Now, suppose that  we have
\begin{equation}\label{E:new}
h^M \bar{f}=a\cdot V_M^\rmn \bar{g} \quad \textrm{for}  \quad (\bar f, \bar g)\in H^0(\Sh(\calQ)^\tor_{\Delta, 1}, \omega_1^{({\bf 1}, -1)})  \textrm{ and } a \in \FF_p^\times.
\end{equation}
We now pullback all forms to $\calM(\calQ)_{\Delta, 1}^\tor$ instead.
Recall that there is a differential operator $\theta$ acting on Hilbert modular forms over $\FF$ and increasing weight by $(p+1,\dots,p+1)$, whose action on $q$-expansion mod $p$ (at any cusps) is given by: $\sum_\alpha a_\alpha q^\alpha \mapsto \sum_\alpha \overline{\Nm_{F/\QQ}(\alpha)}\cdot a_\alpha q^{\alpha}$ (cf. \cite[\S16.2]{AG}).
Applying $\theta$ to both sides of (\ref{E:new}), we obtain that $\theta(h^M \bar{f})=0$. Since $\theta$ and $h$ commute (we can check this on $q$-expansion), the injectivity of $h$ implies that $\bar{f}\in\ker\theta$. We conclude that $\bar{f}=0$, since $\theta$ has trivial kernel in weight $({\bf 1},-1)$. (This last fact follows from the arguments of \cite[IV]{katz2}, suitably extended to the settings of Hilbert modular forms).
By \eqref{E:new} again, we see that $V_M^\rmn(\bar  g) = 0$, by the injectivity of $V_M^\rmn$ we conclude $\bar g=0$.
Therefore, we have proved the claim above, namely the injectivity of $\psi_1$ and hence of \eqref{E:doubling}.

We point out that \eqref{E:doubling} is also equivariant under the action of $\TT^{\{0\}}_\calQ$ (as can be seen on the $q$-expansions). The action of $U_p^\rmn$ on the domain of $\psi_m$ is then given, via $\psi_m^{-1}$, by the matrix
$$\left( \begin{array}{cc} 
T_p^\rmn & S_p^\rmn\\
-1 & 0\\
\end{array} \right),
$$
which can be seen by the explicit $q$-expansions.  Here $S_p^\rmn$ is as defined in \S\ref{S:tame Hecke operators}. Therefore, $U_p^\rmn$ satisfies $X^2-T_p^\rmn X+S_p^\rmn =0$.
Denote by $\alpha$ and $\beta$ the distinct eigenvalues of $\bar\rho(\Frob_p)$ in $\FF^\times$, and choose lifts $\tilde\alpha$ and $\tilde\beta$ of $\alpha$ and $\beta$ respectively to $\calO^\times$ (for this we might need to enlarge $E$). We have $\alpha\beta\equiv S_p^\rmn \mod \gothm_\calQ$, and the Hecke operator $(U_p^\rmn-\tilde\alpha)(U_p^\rmn-\tilde\beta)$ acts nilpotently on $\mathrm{Im}(\psi_m)_{\gothm'_\calQ}$. 
In particular, this implies that $U_p^\rmn$ is invertible on $\mathrm{Im}(\psi_m)_{\gothm'_\calQ}$ and $T_p^\rmn = S_p^\rmn (U_p^\rmn)^{-1} + U_p^\rmn$.

We denote by $\TT_{\calQ,n}^{\{0\}}$ the Hecke algebra acting on $\oplus_{m\geq 1}H^0(\Sh(\calQ)_{\Delta,m}^\tor,\omega^{({\bf n},n-2)}_m)$ generated by the operators $T_\gothq$ for $\gothq\notin \calS \cup \calQ$, $U_\gothq$ for $\gothq \in \calQ$, $S_\gothq^\rmn$, and the diamond operators. We set $\tilde\TT_{\calQ,n}^{\{0\}}:=\TT_{\calQ,n}^{\{0\}}[U_p]$. 
Denote by $\gothm'_{\calQ, n}$ the maximal ideal of $\TT_{\calQ,n}^{\{0\}}$ generated by \eqref{E:generators of mrho} and $U_\gothq-\alpha_\gothq$ for all $\gothq\in \calQ$. Let moreover $\tilde\gothm_\alpha$ (resp. $\tilde\gothm_\beta$) denote the maximal ideal of $\tilde\TT_{\calQ,n}^{\{0\}}$ containing $\gothm'_{\calQ,n}$ and $U_p^\rmn-\alpha$ (resp. $U_p^\rmn-\beta$).

Let $I_m$ denote the annihilator in $\TT_{\calQ,\gothm'_\calQ}^{\{0\}}$ of $H^0(\Sh(\calQ)^\tor_{\Delta,m},\omega^{({\bf 1}, -1)}_m)_{\gothm'_\calQ}$. 
As in \cite[\S3.5]{CG}, we see that $\TT_{\calQ,\gothm_\calQ}^{\{0\}}/I_m[U_p^\rmn]\subset\End_\calO(\mathrm{Im}\psi)_{\tilde\gothm_\alpha}$ contains $T_p^\rmn$ and is naturally a quotient of $\tilde\TT_{\calQ,n,\tilde\gothm_\alpha}^{\{0\}}$.
Denote by $\tilde \rho_{\calQ,n,\alpha}:G_F\to\GL_2(\tilde \TT_{\calQ,n,\tilde \gothm_\alpha}^{\{0\}})$ the Galois representation attached to the ordinary Hecke algebra acting in weight $({\bf n},n-2)$ and cohomological degree zero. 
Composing this representation with the quotient map considered above, we obtain representations $\tilde \rho_{\calQ,m}: G_F \to \GL_2(\TT_{\calQ,\gothm'_\calQ}^{\{0\}}/I_m[U_p^\rmn])$. 
Since all traces and determinants lie in the smaller ring $\TT_{\calQ, \gothm'_\calQ}^{\{0\}}/I_m$, we obtain a representation $ \rho_{\calQ,m}: G_F \to \GL_2(\TT_{\calQ,\gothm'_\calQ}^{\{0\}}/I_m)$.
The representation $\rho_\calQ:=\varprojlim_m \rho_{\calQ,m}$ satisfies the desired properties, except possibly the condition of being unramified at $p$. We observe that

$$\tilde\rho_{\calQ,m}|_{G_{F_p}}\simeq 
\left( \begin{array}{ccc}
\epsilon_m^{M(p-1)}\lambda_{\tilde\beta} & * \\
0 & \lambda_{\tilde\alpha}
\end{array} \right) 
$$
where $\lambda_x:G_{F_p}\to(\calO/(\varpi^m))^\times$ denotes the unramified character of $G_{F_p}$ sending a geometric Frobenius element to $x$. Notice that $\epsilon_m^{M(p-1)}$ is trivial since $p^{m-1}$ divides $M$.

The Galois representation $\tilde\rho_{\calQ,m}$ can be equivalently (by the Chebotarev density theorem) defined using the eigenvalue $\beta$, so that:
$$\tilde\rho_{\calQ,m}|_{G_{F_p}}\simeq 
\left( \begin{array}{ccc}
\lambda_{\tilde\alpha} & * \\
0 & \lambda_{\tilde\beta}
\end{array} \right) \simeq
\left( \begin{array}{ccc}
\lambda_{\tilde\beta} & * \\
0 & \lambda_{\tilde\alpha}
\end{array} \right),
$$
Since $\tilde\alpha\neq\tilde\beta$ we deduce that the extension classes denoted by $*$ are trivial. Thus $\tilde \rho_{\calQ, m}$ and hence $\rho_{\calQ,m}$ is unramified at $p$. 
\end{proof}

\begin{remark}
It seems that the methods of \cite[\S3.6--7]{CG} would allow us to prove the above result also when $\alpha=\beta$. See e.g. \cite{dimitrov-wiese}.
\end{remark}

\subsection{Unramifiedness in the case of surfaces}\label{S:unr sur}
We assume in this section that $g=2$. Recall that we are moreover requiring for simplicity that $p$ is inert in $F$. We will prove, under the assumption of Frobenius-distinguishedness introduced in Proposition \ref{P:nr}, that Galois representations arising from Hilbert modular classes of paritious weights $\kappa=({\bf 1},-1)$ are unramified at $p$.
We keep the notation as in \S \ref{S:Hecke TW primes} and Proposition~\ref{P:nr}.

Let $\chi$ denote the Teichm\"uller lift of $\det\bar\rho$ and denote by $R_\calQ$ the complete local Noetherian $\calO$-algebra representing the functor of framed $\calO$-deformations of $\bar\rho|_{G_{F, \calS \cup \calQ}}$ with determinant $\chi$.

Denote by $\TT_\calQ$ the image of the abstract tame Hecke algebra acting on
\[
\bigoplus_{m \geq 1, k \geq 0} H^k (\Sh(\calQ)^\tor_{\Delta, m}, \omega_m^{({\bf 1}, -1)}),
\]
by sending $t_\gothq \mapsto T_\gothq$ ($\gothq \notin \calS \cup \calQ$), $s_\gothq^\rmn \mapsto S_\gothq^\rmn$, and $[a]\mapsto $ the Diamond operator $\langle \tilde a \rangle$ for $\tilde a \in (\calO_F/\calQ\calO_F)^\times$ lifting $a \in \Delta$.
There is a natural surjective map $\TT_\calQ \twoheadrightarrow \TT_\calQ^{\{0\}}$ (where the latter is the Hecke action on $H^0$ only). Let $\gothm_\calQ$ denote the preimage of the maximal ideal $\gothm'_\calQ$.
The main result of  \cite{emerton-reduzzi-xiao} implies that there is a natural continuous homomorphism of $\calO$-algebras $R_\calQ\rightarrow\TT_{\calQ,\gothm_\calQ}$. In particular, we can view $H^k(\Sh(\calQ)_{\Delta,m}^\tor, \omega_m^{({\bf 1}, -1)})_{\gothm_\calQ}$ as a module over $R_\calQ$.

We shall frequently use the following.
The exact sequence
\[
0\rightarrow\omega^{({\bf 1}, -1)}\overset{\cdot\varpi^m}\longrightarrow\omega^{({\bf 1}, -1)}\longrightarrow\omega_m^{({\bf 1}, -1)}\rightarrow 0
\]
of coherent sheaves on $\Sh(\calQ)_{\Delta}^\tor$ induces a long exact sequence in cohomology which, after localization at $\gothm_\calQ$, is given by
\begin{align}\label{E:les}
\dots\rightarrow & H^i(\Sh(\calQ)_{\Delta}^\tor,\omega^{({\bf 1}, -1)})_{\gothm_\calQ}\xrightarrow{\varpi^m} H^i(\Sh(\calQ)_{\Delta}^\tor,\omega^{({\bf 1}, -1)})_{\gothm_\calQ}\rightarrow H^i(\Sh(\calQ)_{\Delta,m}^\tor,\omega^{({\bf 1}, -1)}_m)_{\gothm_\calQ}\\
\nonumber \rightarrow&
H^{i+1}(\Sh(\calQ)_{\Delta}^\tor,\omega^{({\bf 1}, -1)})_{\gothm_\calQ}\rightarrow\cdots
\end{align}


Denote by $\mathscr{I}_\calQ$ the ideal of $R_\calQ$ characterized by the following property: a lifting $\rho:G_{F,\calS \cup \calQ}\ra \GL_2(A)$ of $\bar\rho$ with values in a complete local noetherian $\calO$-algebra $A$ is unramified at the unique prime of $F$ above $p$ if and only if the corresponding map $R_\calQ\ra A$ factors through $R_\calQ/\scrI_\calQ$.

\begin{lemma}\label{L:unramified}
If $i\in\{0,2\}$, the $R_\calQ$-module $H^i(\Sh(\calQ)_{\Delta,m}^\tor,\omega_m^{({\bf 1}, -1)})_{\gothm_\calQ}$ is supported on $\Spec(R_\calQ/\mathscr{I}_\calQ)$.
\end{lemma}
\begin{proof}
When $i=0$ this follows from the Frobenius distinguishedness assumption and Proposition \ref{P:nr}. Assume $i=2$. By Lemma~\ref{L:hecke operator duality}, the Serre duality gives a natural isomorphism
\[
H^2(\Sh(\calQ)_{\Delta,m}^\tor, \omega_m^\kappa) \cong H^0(\Sh(\calQ)_{\Delta, m}^\tor, \omega_m^\kappa(-\ttD))^
\vee
\]
intertwining the action of $T_\gothq$ with $T_{\gothq}^\vee$ for $\gothq \notin \calS \cup \calQ$.
But the Galois representation appearing in $H^0(\Sh(\calQ)_{\Delta, m}^\tor, \omega_m^\kappa(-\ttD))_{\gothm_\calQ} \subseteq (\Sh(\calQ)_{\Delta, m}^\tor, \omega_m^\kappa)_{\gothm_\calQ}$ is unramified by Proposition~\ref{P:nr}. This implies our lemma when $i=2$.
\end{proof}

Note that we did not require the residual representation $\bar{\rho}$ to be unramified at $p$. (But of course if $\bar \rho$ is ramified at $p$, the localizations $H^i(\Sh(\calQ)_{\Delta,m}^\tor, \omega_m^{({\bf 1}, -1)})_{\gothm_\calQ}$ for $i=0,2$ are zero.) The following lemma shows this for $i=1$.

\begin{lemma}\label{L:rho unram}
Suppose that $\bar \rho$ is modular, i.e. $\gothm_\calQ$ is not the entire $\TT_{\calQ}$.
The representation $\bar\rho$ is unramified at $p$.
\end{lemma}
\begin{proof}
Suppose by contradiction that $\bar\rho$ is ramified at $p$. 
By Lemma \ref{L:unramified} we see that $H^i(\Sh(\calQ)_{\Delta,m}^\tor,\omega_m^{({\bf 1}, -1)})_{\gothm_\calQ}=0$ for $i\in\{0,2\}$ and for any $m$. We need to prove this for $i=1$.

The exact sequence (\ref{E:les}) gives: 
\begin{equation*}
\begin{split}
0 & \to  H^1(\Sh(\calQ)_{\Delta}^\tor,\omega^{({\bf 1}, -1)})_{\gothm_\calQ}
\xrightarrow{\varpi^m}
H^1(\Sh(\calQ)_{\Delta}^\tor,\omega^{({\bf 1}, -1)})_{\gothm_\calQ}
\rightarrow 
H^1(\Sh(\calQ)_{\Delta,m}^\tor,\omega^{({\bf 1}, -1)}_m)_{\gothm_\calQ} \\ 
 & \rightarrow
H^{2}(\Sh(\calQ)_{\Delta}^\tor,\omega^{({\bf 1}, -1)})_{\gothm_\calQ}\xrightarrow{\varpi^m}
H^{2}(\Sh(\calQ)_{\Delta}^\tor,\omega^{({\bf 1}, -1)})_{\gothm_\calQ}\rightarrow 0.
\end{split}
\end{equation*}
The injectivity of the second map implies that $H^1(\Sh(\calQ)_{\Delta}^\tor,\omega^{({\bf 1}, -1)})_{\gothm_\calQ}$ is $\varpi$-torsion free. But by Fact~\ref{F:qK cohomology} below, $H^1(\Sh(\calQ)_{\Delta,E}^\tor,\omega_E^{({\bf 1}, -1)})_{\gothm_\calQ}$ sees only representations that are unramified at $p$.
From this, we deduce that $H^1(\Sh(\calQ)_{\Delta}^\tor,\omega^{({\bf 1}, -1)})_{\gothm_\calQ} = 0$.
Moreover, the surjectivity of the multiplication-by-$\varpi^m$ map between the degree-two cohomology groups implies that those localized modules are zero. We conclude that $H^1(\Sh(\calQ)_{\Delta,m}^\tor,\omega_m^{({\bf 1}, -1)})_{\gothm_\calQ}=0$, and hence the ideal $\gothm_\calQ$ is not in the support of any cohomology, contradicting the modularity of $\bar\rho$.
\end{proof}

\begin{fact}
\label{F:qK cohomology}
All Galois representations appeared in $H^i(\Sh(\calQ)^\tor_{\Delta, \CC}, \omega_\CC^{({\bf 1}, -1)})$ are unramified at $p$.
\end{fact}
\begin{proof}
It follows from \cite[Theorem~2.4.4]{harris} that
\[
H^i(\Sh(\calQ)^\tor_{\Delta, \CC}, \omega_\CC^{({\bf 1}, -1)}) \cong H^i(\gothq, K_\infty; C_{si} \otimes \CC(1)).
\]
After localizing at a non-Eisenstein ideal, an easy computation of $(\gothq, K_\infty)$-cohomology gives an isomorphisms of Hecke modules
\[
H^i(\Sh(\calQ)_{\Delta,\CC}^\tor,\omega^{({\bf 1}, -1)}_{\CC})_{\gothm_\calQ} \simeq H^0(\Sh(\calQ)_{\Delta,E}^\tor,\omega^{({\bf 1}, -1)}_{\CC})_{\gothm_\calQ} \otimes \wedge^i(\CC^2).
\]
Our Fact follows from the unramifiedness of Galois representation arising from $H^0$ by \cite{deligne-serre}.
\end{proof}

From now on, we assume that $\bar \rho$ is modular.

\begin{proposition}\label{P:n=4}
There is a positive integer $n$ such that the $R_\calQ$-modules $H^i(\Sh(\calQ)_{\Delta}^\tor,\omega^{({\bf 1}, -1)})_{\gothm_\calQ}$ and $H^i(\Sh(\calQ)_{\Delta,m}^\tor,\omega_m^{({\bf 1}, -1)})_{\gothm_\calQ}$ are annihilated by $\mathscr{I}_\calQ^n$ for all $i,m$. Moreover, the value $n=3$ suffices (when $g=2$).
\end{proposition}
\begin{proof}
We argue using the long exact sequence \eqref{E:les} of $R_\calQ$-modules, which we spell out here.
\begin{align}
\label{E:long exact sequence}
0\rightarrow & H^0(\Sh(\calQ)_{\Delta}^\tor,\omega^{({\bf 1}, -1)})_{\gothm_\calQ}\xrightarrow{\varpi^m} H^0(\Sh(\calQ)_{\Delta}^\tor,\omega^{({\bf 1}, -1)})_{\gothm_\calQ}\rightarrow H^0(\Sh(\calQ)_{\Delta,m}^\tor,\omega^{({\bf 1}, -1)}_m)_{\gothm_\calQ}
\\
\nonumber\rightarrow & H^1(\Sh(\calQ)_{\Delta}^\tor,\omega^{({\bf 1}, -1)})_{\gothm_\calQ}\xrightarrow{\varpi^m} H^1(\Sh(\calQ)_{\Delta}^\tor,\omega^{({\bf 1}, -1)})_{\gothm_\calQ}\rightarrow H^1(\Sh(\calQ)_{\Delta,m}^\tor,\omega^{({\bf 1}, -1)}_m)_{\gothm_\calQ}
\\
\nonumber\rightarrow & H^2(\Sh(\calQ)_{\Delta}^\tor,\omega^{({\bf 1}, -1)})_{\gothm_\calQ}\xrightarrow{\varpi^m} H^2(\Sh(\calQ)_{\Delta}^\tor,\omega^{({\bf 1}, -1)})_{\gothm_\calQ}\rightarrow H^2(\Sh(\calQ)_{\Delta,m}^\tor,\omega^{({\bf 1}, -1)}_m)_{\gothm_\calQ} \to 0.
\end{align}

First, $H^0(\Sh(\calQ)_{\Delta}^\tor,\omega^{({\bf 1}, -1)})_{\gothm_\calQ}$ is $\varpi$-torsion free, hence it is a Hecke-equivariant subspace of $H^0(\Sh(\calQ)^\tor_{\Delta, E},\omega^{({\bf 1}, -1)}_E)_{\gothm_\calQ}$, which is annihilated by $\scrI_\calQ$ thanks to \cite{deligne-serre}. By Lemma \ref{L:unramified} we also know that $H^0(\Sh(\calQ)_{\Delta,m}^\tor,\omega^{({\bf 1}, -1)}_m)_{\gothm_\calQ}$ and $H^2(\Sh(\calQ)_{\Delta,m}^\tor,\omega^{({\bf 1}, -1)}_m)_{\gothm_\calQ}$ are annihilated by $\scrI_\calQ$.
Moreover, by Serre duality and its compatibility with Hecke operators (cf. Lemma~\ref{L:hecke operator duality}), we have an isomorphism of $R_\calQ$-modules
\[
H^2(\Sh(\calQ)_{\Delta}^\tor,\omega^{({\bf 1}, -1)})_{\gothm_\calQ} \cong H^0(\Sh(\calQ)_{\Delta}^\tor,\omega^{({\bf 1}, -1)} \otimes E/\calO)_{\gothm_\calQ}^\vee.
\]
We know that $\scrI_\calQ$ annihilate the right hand side, so it also kills the left hand side.

We now look at the following exact sequence of $R_\calQ$-modules (to separate the torsion part and torsion-free part)
\begin{equation}\label{E:H1tor}
0 \ra H^1(\Sh(\calQ)_\Delta^\tor,\omega^{({\bf 1}, -1)})_{\gothm_\calQ}[\varpi^\infty] \ra H^1(\Sh(\calQ)_\Delta^\tor,\omega^{({\bf 1}, -1)})_{\gothm_\calQ} \ra 
E\otimes_\calO H^1(\Sh(\calQ)_\Delta^\tor,\omega^{({\bf 1}, -1)})_{\gothm_\calQ}.
\end{equation}
The torsion part in the above sequence is a quotient of $H^0(\Sh(\calQ)_{\Delta,m'}^\tor,\omega^{({\bf 1}, -1)}_{m'})_{\gothm_\calQ}$ for some large enough $m'>0$ by \eqref{E:long exact sequence}; in particular as an $R_\calQ$-module it is annihilated by $\scrI_\calQ$. The last term of (\ref{E:H1tor}) coincides with $H^1(\Sh(\calQ)_{\Delta, E}^\tor,\omega^{({\bf 1}, -1)}_E)_{\gothm_\calQ}$ and is therefore annihilated by $\scrI_\calQ$ according to Fact~\ref{F:qK cohomology}. We conclude that $\scrI_\calQ^2\cdot H^1(\Sh^\tor,\omega^{({\bf 1}, -1)})_{\gothm_\calQ}=0$.

Finally, by the exact sequence \eqref{E:long exact sequence}, the term $H^1(\Sh^\tor_m,\omega_m^{({\bf 1}, -1)})_{\gothm_\calQ}$ is an extension of a submodule of $H^2(\Sh^\tor,\omega^{({\bf 1}, -1)})_{\gothm_\calQ}$ and a quotient of $H^1(\Sh^\tor_m,\omega_m^{({\bf 1}, -1)})_{\gothm_\calQ}$. So it is annihilated by $\scrI_\calQ^3$. This concludes the proof of the Proposition.
\end{proof}

\subsection{The patching argument}\label{S:patch}
We make use here the techniques of patching complexes of \cite[\S6]{CG}. All the ring dimensions computed below are absolute Krull dimensions, unless otherwise stated. Recall that $p$ is assumed to be inert in the totally real field $F$, and that we assume $g=2$ (but most of the arguments below continue to hold for arbitrary $g$).

Let $R_p^\square$ denote the universal \emph{framed} $\calO$-deformation ring of $\bar\rho|_{G_{F_p}}$ corresponding to lifts of determinant $\chi|_{G_{F_p}}$. Let $\scrI_p$ denote the ideal defining the locus where the universal (framed) deformation of $\bar  \rho$ is unramified. For each positive integer $n$ we set $R_p^{(n)}:= R_p^\square/\mathscr{I}_p^n$. Since unramified lifts are determined by the matrix of a Frobenius element, we deduce by simple calculations that:
$$ \dim R_p^{(n)} = 4.$$

Recall the finite set $\calS$ of places of $F$ defined in \S \ref{S:tame Hecke operators}.
For each finite place $\gothq\in\calS \backslash\{p\}$ we similarly denote by $R_\gothq^\square$ the universal framed $\calO$-deformation ring of $\bar\rho|_{G_{F_\gothq}}$ corresponding to lifts of determinant $\chi|_{G_{F_\gothq}}$. Since $\gothq\nmid p$ we have (cf. \cite[Theorem 3.3.1]{boe}):
$$\dim R^\square_\gothq=4, \hspace{0.3cm} \gothq\in \calS \backslash\{p\}.$$

Put
$$R^{(n)}_{\mathrm{loc}}:=R_p^{(n)}\hat\otimes_\calO\,\hat\bigotimes_{\gothq\in\calS \backslash\{p\}}R^\square_\gothq.$$

We put $R_\calQ^{(n)}: = R_\calQ / \scrI_\calQ^n$, and let  $R_\calQ^{(n),\square_{\calS}}$ denote the deformation ring with frames at finite places in $\calS$. In particular, $R_\calQ^{(n),\square_\calS}$ is a free power series ring in $j : =4|\calS|-1$ variables over $R_\calQ^{(n)}$.
Moreover, restriction to decomposition groups at the finite places in $\calS$ induces a natural morphism $R_{\mathrm{loc}}^{(n)}\to R_\calQ^{(n),\square_\calS}$.

Assume that $\bar\rho(G_{F})$ contains $\SL_2(\FF_p)$ and that $p>3$. Recall that $\bar\rho$ is totally odd, since it is modular. By \cite[Proposition 5.9]{gee} there exists an integer $q\geq 1$ with the following property: for any $N\geq 1$ there is a set $\calQ_N$ consisting of finite primes of $F$ such that:
\begin{itemize}
\item $\calQ_N$ has cardinality $q$ and is disjoint from the set $\calS$;
\item for each $\gothq\in \calQ_N$, $\bar\rho(\mathrm{Frob}_\gothq)$ has two distinct eigenvalues $\alpha_\gothq,\beta_\gothq\in\FF$;
\item $\mathrm{Nm}(\gothq)\equiv 1$ (mod $p^N$) for all $\gothq\in \calQ_N$;
\item $R_{\calQ_N}^{(n),\square_\calS}$ is topologically generated over $R_{\mathrm{loc}}^{(n)}$ by $h:=q+|\calS|-1-[F:\QQ]=q+|S|-3$ elements.
\end{itemize}

For each $N$ we now fix a choice of Taylor--Wiles primes $\calQ_N$, and for each such set a choice of distinguished eigenvalue $\alpha_\gothq$ of $\bar\rho(\Frob_\gothq)$ for $\gothq\mid \calQ_N$ .

Set $n=3$ and define
$$R_\infty:=R^{(n)}_{\mathrm{loc}}[[x_1,\dots,x_h]],$$
so that
\begin{equation}\label{E:num}
\dim R_\infty = 3|\calS|+1+h=1+q+j-[F:\QQ].
\end{equation}

We choose for each $N \geq 1$ a surjection 
\begin{equation}\label{E:surj}
R_\infty\twoheadrightarrow R_{\calQ_N}^{(3),\square_S}.
\end{equation}

We let $S_N=\calO[\Delta_N]$, where $\Delta_N=(\ZZ/p^N\ZZ)^q$, and we set $S_\infty=\varprojlim_N S_N\simeq\calO[[(\ZZ_p)^q]]$. If $M\geq N\geq 0$ and if $I$ is an ideal of $\calO$, we regard $S_N/I$ as a quotient of $S_M$ via the natural surjective maps $\calO\to\calO/I$ and $\Delta_M\to\Delta_N$.

We denote the operation of complete tensor product over $\calO$ with $\calO^\square:=\calO[[z_1,\dots,z_j]]$ by the superscript $\square$. For example, $S^\square_\infty: = S_\infty \widehat \otimes_\calO \calO[[z_1, \dots, z_j]]$.

Denote by $\gothm_{Q_N}$ the maximal ideal of the Hecke algebra $\TT_{\calQ_N}$ contracting to $\gothm_\emptyset\subset\TT_\emptyset$ and containing $U_\gothq-\alpha_\gothq$ for each $\gothq \mid \calQ_N$, where the eigenvalues $\alpha_\gothq$ are fixed as above. Applying to our settings the construction of \cite[\S7.2]{CG} (noting that $\Sh(\calQ)_{\Delta} \to \Sh(\calQ)$ is \'etale with Galois group $(\calO_F/\calQ\calO_F)^\times$ by \S \ref{S:Sh(Q)01}), we deduce the existence of a perfect complex $$0\to C_{N,2}\to C_{N,1}\to C_{N,0}\to 0$$ of $S_N/(\varpi^N)$-modules with an action of $\TT_{\calQ_N,\gothm_{\calQ_N}}$ whose $i$th homology is $\TT_{\calQ_N,\gothm_{\calQ_N}}$-equivariantly isomorphic to:
\[
H_i(\Sh(\calQ_N)^\tor_{\Delta_N,N},\omega_N^{({\bf 1},-1)}(-\ttD))_{\gothm_{\calQ_N}}  := H^i(\Sh(\calQ_N)^\tor_{\Delta_N,N},\omega_N^{({\bf 1},-1)})^\vee_{\gothm_{\calQ_N}}.
\]

Here  the superscript $^\vee$ denotes taking $\calO/(\varpi^N)$-dual.

\begin{remark}
The complex that we have denoted here by $C_{N,\ast}$ is denoted in the end of \S 7.2 of \cite{CG} by $\varinjlim_m T^m C_n$, where $T$ is a suitable Hecke operator constructed therein. Taking the limit is what allows to obtain the cohomology localized at ${\gothm_{\calQ_N}}$ as in \cite[\S7.2]{CG}.
\end{remark}

Let $D_{N}^\ast$ denote the chain complex obtained from $C_{N,\ast}$ by taking $\calO/(\varpi^N)$-duals, \emph{i.e.}, $D_{N}^i:=C_{N,i}^\vee$. Observe that
\begin{equation}\label{E:D_N}
H^i(D_{N}^\ast)\simeq H^i(\Sh(\calQ_N)^\tor_{\Delta_N,N},\omega^{({\bf 1}, -1)}_N)_{\gothm_{\calQ_N}}
\end{equation}
is the cohomology in which we are interested. 
The main result of \cite{emerton-reduzzi-xiao} together with Proposition \ref{P:n=4} imply the existence of a canonical map $R_{\calQ_N}^{(4),\square_\calS}\to\TT_{\calQ_N,\gothm_{\calQ_N}}^\square$, so that the global deformation ring $R_{\calQ_N}^{(4),\square_\calS}$ acts on the cohomology of $D_N^{\square,\ast}: = D_N^\ast \widehat\otimes_{\calO} \calO^\square$. In particular for each $M\geq N\geq 0$ with $M\geq 1$ and for each $m\geq 1$, the cohomology $H^i(D_M^{\square,\ast}\otimes_{S_M}S_N/(\varpi^m))$ is also an $R_\infty$-module via (\ref{E:surj}), and the actions of $R_\infty$ and $S_M^\square$ on this space commute.

We let $H:=H^2(\Sh^\tor,\omega^{({\bf 1}, -1)}\otimes_\calO E/\calO)_{\gothm_\emptyset}$ and we defined a chain complex $T$ with trivial differentials $d=0$ by setting $T^i:=H^i(\Sh^\tor_\FF,\omega^{({\bf 1}, -1)}_\FF)_{\gothm_\emptyset}$, so that $H^2(T)\simeq H/(\varpi)$. We also set $R=R_\emptyset^{(4),\square_S}$ to be the global deformation ring defined earlier attached to the empty set of Taylor--Wiles primes. Notice that $H$ is an $R$-module.

By (\ref{E:num}) we have the numerical equality:

$$\dim R_\infty = \dim S_\infty^\square - [F:\QQ].$$

We then see that the hypotheses of \cite[Theorem 6.3]{CG} are satisfied (the notation we introduced for $S_N,S_\infty,R,R_\infty,H,T,$ and $D_N$ matches the notation of the statement of the theorem in \emph{loc.cit.}, with $l_0$ there being equal to $g=2$).
In particular we can patch the complexes $D_{N}^\ast$ to produce a perfect chain complex

$$0\to P_\infty^{\square,0}\to P_\infty^{\square,1} \to P_\infty^{\square,2} \to 0$$
of finitely generated $S_\infty^\square$-modules which is a projective resolution of $H^2(P_\infty^{\square,\ast})$. Moreover, the cohomology of $P_\infty^{\square,\ast}$ carries an action of $R_\infty\hat\otimes_\calO S^\square_\infty$. We have therefore an isomorphism of $R_\infty\otimes_\calO \calO/(\varpi^m)$-modules:

$$\Tor_1^{S_\infty^\square}(H^2(P_\infty^{\square,\ast}),\calO/(\varpi^m))\simeq H^1(P_\infty^{\square,\ast}\otimes_{S_\infty^\square}\calO/(\varpi^m)).$$

Observe that the action of $R_\infty$ on $H^2(P_\infty^{\square,\ast})$ factors through the quotient by $\mathscr{I}_pR_\infty$ since the top-degree cohomology $H^2(P_\infty^{\ast})$ is constructed by patching the duals of a suitable system of $H^0(D_{M_i}^\ast\otimes_{S_{M_i}}S_{N_i}/(\varpi^{N_i}))$ for various $M_i\geq N_i\geq 1$ (cf. proof of \cite[Theorem 6.3]{CG}), and these $H^0$ are all supported on the unramified locus by (\ref{E:D_N}) and Proposition \ref{P:nr}. So $H^1(P_\infty^{\square,\ast}\otimes_{S_\infty^\square}\calO/(\varpi^m))\simeq H^1(\Sh^\tor_m,\omega^{({\bf 1}, -1)}_m)_{\gothm_\emptyset}^\square$ is annihilated by $\mathscr{I}_p\subset R_p^\square$. Similarly, computing $\Tor_1^{S_\infty^\square}(H^2(P_\infty^{\square,\ast}),\calO)$ we see that $H^1(\Sh^\tor,\omega^{({\bf 1}, -1)})_{\gothm_\emptyset}^\square$ is annihilated by $\mathscr{I}_p$.\\

We conclude:

\begin{theorem}\label{T:main}
Assume that $p>3$, that $\bar\rho$ is Frobenius-distinguished at $p$, and that $\bar\rho(G_{F})$ contains $\SL_2(\FF_p)$. Then
the cohomology groups $H^1(\Sh^\tor_m,\omega^{({\bf 1}, -1)}_m)_{\gothm_\emptyset}$ and $H^1(\Sh^\tor,\omega^1)_{\gothm_\emptyset}$ are supported on $\Spec(R_\emptyset/\scrI_\emptyset)$, \emph{i.e.}, they give rise to Galois representations unramified at $p$.
\end{theorem}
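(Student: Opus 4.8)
The plan is to assemble the statement from the structural inputs built up in the preceding subsections: the existence of the $T_p$ operator on weight-$\mathbf 1$ torsion classes (Section~\ref{Sec:section 3}), the ``squeezing'' bound $\scrI^4\cdot H^\bullet(\Sh^\tor_m,\omega^1_m(-\ttD))_{\gothm_\emptyset}=0$ of Proposition~\ref{P:n=4}, and the patching-complex machinery of \cite[\S6]{CG}. First I would recall that, under the hypotheses $p>3$, $\bar\rho$ Frobenius-distinguished, and $\bar\rho(G_F)\supseteq \SL_2(\FF_p)$, Lemma~\ref{L:rho unram} applies, so $\scrI$ is a proper ideal of $R_p$ and $R_p^{(4)}=R_p/\scrI^4$ has Krull dimension $4$. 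This is exactly the input that makes the numerical identity \eqref{E:num}, namely $\dim R_\infty=\dim S_\infty^\square-[F:\QQ]$ with $[F:\QQ]=2$, hold after we set $n=4$.

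Next I would carry out the patching construction verbatim as in \ref{S:patch}: fix the Taylor--Wiles data $(Q_N,\alpha_x)_{x\mid Q_N}$ from \cite[Proposition~5.9]{gee}, form the dual complexes $D_N^{\square,\ast}$ computing $H^\bullet(\Sh(Q_N)^\tor_{\Delta_N,N},\omega^1_N(-\ttD))_{\gothm_{Q_N}}$ as in \eqref{E:D_N}, verify the hypotheses of \cite[Theorem~6.3]{CG} (with $l_0=g=2$), and obtain the patched perfect complex $P_\infty^{\square,\ast}$ of length $2$ over $S_\infty^\square$ which, by the dimension count, is a projective resolution of its top cohomology $H^2(P_\infty^{\square,\ast})$. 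Then I would invoke the crucial fact that the $R_\infty$-action on $H^2(P_\infty^{\square,\ast})$ factors through $R_\infty/\scrI R_\infty$: this is because $H^2$ is patched from duals of the degree-zero cohomologies $H^0(D^\ast_{M_i}\otimes_{S_{M_i}}S_{N_i}/(\varpi^{N_i}))$, which are unramified at $p$ by Proposition~\ref{P:nr} applied over the Taylor--Wiles covers. Finally, the projective-resolution property yields the $\Tor$-identifications
\[
\Tor_1^{S_\infty^\square}(H^2(P_\infty^{\square,\ast}),\calO/(\varpi^m))\simeq H^1(\Sh^\tor_m,\omega^1_m(-\ttD))^\square_{\gothm_\emptyset},
\]
and similarly with $\calO$ in place of $\calO/(\varpi^m)$, so that $\scrI\subseteq R_p$ annihilates both $H^1(\Sh^\tor_m,\omega^1_m(-\ttD))_{\gothm_\emptyset}$ and $H^1(\Sh^\tor,\omega^1(-\ttD))_{\gothm_\emptyset}$; equivalently, these modules are supported on $\Spec(R_p/\scrI)$, which is precisely the unramifiedness assertion.

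The main obstacle is making sure that the compatibility between the Hecke action and the Galois deformation ring action survives the passage through the limit over $m$ (inside each $D_N^\ast$) and the patching over $N$: one needs the canonical map $R_{Q_N}^{(4),\square_S}\to\TT^\square_{Q_N,\gothm_{Q_N}}$ of \cite{emerton-reduzzi-xiao}, \emph{combined} with Proposition~\ref{P:n=4}, to know the action really does factor through the $\scrI^4$-quotient at finite level, and then to check that this factorization is preserved when applying the Taylor--Wiles patching functor — i.e. that the action of $R_\infty$ on the patched $H^2$ (built from duals of $H^0$'s) genuinely kills $\scrI$, not merely $\scrI^4$. The rest is bookkeeping: matching the dimension inequality $\operatorname{codim}_{S_\infty^\square}H^\bullet(P_\infty^{\square,\ast})=[F:\QQ]$ with the length $2$ of the coherent-cohomology complex so that \cite[Lemma~6.2]{CG} applies, and unwinding the Grothendieck--Serre--Verdier duality identification $H^i(D_N^\ast)\simeq H^i(\Sh(Q_N)^\tor_{\Delta_N,N},\omega^1_N(-\ttD))_{\gothm_{Q_N}}$ used throughout.
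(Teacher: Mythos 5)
Your proposal is correct and follows essentially the same route as the paper: the statement is proved exactly by the patching argument of \ref{S:patch}, using Proposition~\ref{P:n=4} to act through $R_p/\scrI^4$ at finite level, the numerical coincidence $\dim R_\infty=\dim S_\infty^\square-[F:\QQ]$ to make $P_\infty^{\square,\ast}$ a projective resolution of $H^2(P_\infty^{\square,\ast})$ via \cite[Theorem~6.3]{CG}, the observation that $H^2(P_\infty^{\square,\ast})$ is patched from duals of $H^0$'s and hence killed by $\scrI$ (Proposition~\ref{P:nr}), and the $\Tor_1$ identification to transfer this to $H^1$. You have correctly isolated the one genuinely delicate point, namely that the patched top cohomology is annihilated by $\scrI$ itself and not merely $\scrI^4$.
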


\begin{remark}
It seems that if we assume minimality condition analogous to \cite[Theorem 1.3]{CG},  we might be able to prove certain $R=T$ theorem for minimal deformations.
\end{remark}

\subsection{A conjecture}

The conjecture below follows from the expected properties of modular Galois representations arising from forms of weight $(1,\dots,1)$:

\begin{conjecture}\label{C:alpha}
Let $F$ be a totally real number field of degree $g$ over $\QQ$ and let $p$ be an arbitrary prime number. Fix a prime $\gothp$ of $F$ lying above $p$, and let $\kappa_\gothp=((k_\tau)_{\tau\in\Sigma},w)$ be a paritious weight such that $k_\tau=1$ for all $\tau\in\Sigma_\gothp$, and that $w=-1$.
Let $\TT$ denote the image of the universal tame Hecke algebra acting on $H^\bullet(\Sh^\tor,E/\calO\otimes_\calO\omega^{\kappa_\gothp})$ and let $\gothm$ denote a non-Eisenstein maximal ideal of $\TT$, with associated Galois representation $\bar\rho$. Then:
\begin{enumerate}
\item $\bar\rho$ is unramified at $\gothp$;
\item Let $R$ denote the universal ring for $\calO$-deformations of $\bar\rho$ with fixed central character, and let $\scrI$ denote the proper ideal of $R$ cutting out the locus of lifts that are unramified at $ \gothp$. Then there exists a positive integer $n$ depending on $g$ such that $\scrI^n$ annihilates the $R$-module $H^\bullet(\Sh^\tor,E/\calO\otimes_\calO\omega^{\kappa_\gothp})_\gothm$.
\end{enumerate}
\end{conjecture}

Assuming the above conjecture and applying the arguments of the previous section one can prove that:

$$\Tor_i^{S_\infty^\square}(H^g(P_\infty^{\square,\ast}),\calO/(\varpi^m))\simeq H^i(P_\infty^{\square,\ast}\otimes_{S_\infty^\square}\calO/(\varpi^m))\simeq H^i(\Sh^\tor_m,\omega^{\kappa_\gothp}_m)_\gothm^\square$$
for all $i$ and all $m$. (Here the notation is as before). Using a suitable generalization of Proposition \ref{P:nr} to the case of non-Frobenius-distinguished representations, together with Grothendieck-Serre-Verdier duality, we see that the action of $R$ on $H^g(P_\infty^{\square,\ast})$ factors through $\scrI$. We then obtain:

\begin{proposition}\label{T:conj_main}
Fix a prime $\gothp$ of $F$ above $p>3$ and a paritious weight $\kappa_\gothp$ with $k_\tau=1$ for all $\tau\in\Sigma_\gothp$ and $w=-1$. Let $\TT$ denote the image of the universal tame Hecke algebra acting on $H^\bullet(\Sh^\tor,E/\calO\otimes_\calO\omega^{\kappa_\gothp})$ and let $\gothm$ denote a non-Eisenstein maximal ideal of $\TT$. Assume the validity of Conjecture \ref{C:alpha}, and suppose Proposition \ref{P:nr} holds without the assumption of Frobenius distinguishness at $\gothp$.  Then $H^\bullet(\Sh^\tor,E/\calO\otimes_\calO\omega^{\kappa_\gothp})_\gothm$ is supported on the unramified locus of $\Spec R$.
\end{proposition}

\end{document}